\newtheorem{proposition}{Proposition}[subsection]
\newtheorem{theorem}[proposition]{Theorem}
\newtheorem{corollary}[proposition]{Corollary}
\newtheorem{lemma}[proposition]{Lemma}
\newtheorem{prop}[proposition]{Proposition}
\newtheorem{cor}[proposition]{Corollary}
\newtheorem{example}[proposition]{Example}
\newtheorem{definition}[proposition]{Definition}
\newtheorem{remark}[proposition]{Remark}
\numberwithin{equation}{section}
 \newcounter{tmp}
\newcounter{margincounter}
\newcommand{\calA}{\mathcal{A}}
\newcommand{\calD}{\mathcal{D}}
\newcommand{\calE}{\mathcal{E}}
\newcommand{\calF}{\mathcal{F}}
\newcommand{\calI}{\mathcal{I}}
\newcommand{\calM}{\mathcal{M}}
\newcommand{\calO}{\mathcal{O}}
\newcommand{\calR}{\mathcal{R}}
\newcommand{\calS}{\mathcal{S}}
\newcommand{\calT}{\mathcal{T}}
\newcommand{\calU}{\mathcal{U}}
\newcommand{\calV}{\mathcal{V}}
\newcommand{\calW}{\mathcal{W}}
\newcommand{\calY}{\mathcal{Y}}
\newcommand{\Kr}{\operatorname{Kr}}
\newcommand{\newword}[1]{\textbf{\emph{#1}}}
\renewcommand{\Join}{\bigvee}
\newcommand{\Meet}{\bigwedge}
\newcommand{\meet}{\wedge}
\newcommand{\join}{\vee}
\newcommand{\F}{\mathcal{F}}
\newcommand{\CJI}{\operatorname{CJI}}
\newcommand{\CMI}{\operatorname{CMI}}
\newcommand{\covers}{{\,\,\,\cdot\!\!\!\! >\,\,}}
\newcommand{\covered}{{\,\,<\!\!\!\!\cdot\,\,\,}}
\newcommand{\1}{\hat{1}}
\newcommand{\0}{\hat{0}}
\newcommand{\module}{\operatorname{mod}}
\newcommand{\Hom}{\operatorname{Hom}}
\newcommand{\End}{\operatorname{End}}
\newcommand{\Gen}{\operatorname{Gen}}
\newcommand{\Cogen}{\operatorname{Cogen}}
\newcommand{\Ext}{\operatorname{Ext}}
\newcommand{\add}{\operatorname{add}}
\newcommand{\filt}{Filt}
\newcommand{\tors}{\operatorname{tors}}
\newcommand{\ftors}{\operatorname{f\textendash tors}}
\newcommand{\Ker}{\operatorname{Ker}}
\newcommand{\cok}{\operatorname{Coker}}
\newcommand{\image}{\operatorname{image}}
\newcommand{\torf}{\operatorname{torf}}
\newcommand{\ind}{\operatorname{ind}}
\newcommand{\ME}{\operatorname{ME}}
\newcommand{\wide}{\operatorname{wide}}
\newcommand{\fwide}{\operatorname{f\textendash wide}}
\newcommand{\NC}{\operatorname{NW}}
\newcommand{\lcm}{\operatorname{lcm}}
\newcommand{\CJR}{\operatorname{CJR}}
\newcommand{\CMR}{\operatorname{CMR}}
\author{Emily Barnard}\address{Department of Mathematical Sciences, DePaul University, Chicago, IL 60614, USA}\email{
e.barnard@depaul.edu}
\author{Gordana Todorov}
\address{Department of Mathematics, Northeastern University, 
Boston, MA 02115, USA}
\email{g.todorov@northeastern.edu}
\author{Shijie Zhu}\address{Department of Mathematics, 
The University of Iowa, 
Iowa City,IA 52242, USA}
\email{shijie-zhu@uiowa.edu}
\title{Dynamical combinatorics and torsion classes}
\begin{document}
\maketitle

\begin{abstract}
For finite semidistributive lattices  
the map $\kappa$ gives a bijection between  the sets of completely join-irreducible elements and completely meet-irreducible elements.

Here we study the $\kappa$-map in the context of torsion classes.
It is well-known that the lattice of torsion classes for an artin algebra is semidistributive, but in general it is far from finite.
We show the $\kappa$-map is well-defined on the set of completely join-irreducible elements, even when the lattice of torsion classes is infinite.
We then extend $\kappa$ to a map on 
torsion classes which have canonical join representations given by the special torsion classes associated to the minimal extending modules
introduced in \cite{BCZ} by the first and third authors and A. Carroll. 

For hereditary algebras, we show that the extended $\kappa$-map on torsion classes is essentially the same as Ringel's $\epsilon$-map on wide subcategories. Also in the hereditary case, we relate the square of $\kappa$ to the Auslander-Reiten translation.
\end{abstract}

\setcounter{tocdepth}{2}
\tableofcontents
\section{Introduction}

In this paper we study the lattice theoretic map $\kappa$ and 
its relations  to  various representation theoretic notions in the lattices of torsion classes over finite dimensional algebras.

Recall that a lattice (or lattice-poset) is a poset in which each pair of elements has a unique smallest upper bound (called the join) and a unique greatest lower bound (called the meet).
We focus on a special class of lattices called semidistributive lattices; they satisfy a semidistributive law which is a weakening of the distributive law (Definition \ref{def}).
Semidistributive lattices arise in algebraic combinatorics as the weak order on a finite Coxeter group, in number theory as the divisibility poset (Example~\ref{number_theory}), and in representation theory as the lattice of torsion classes. 

Fundamental to the study of \emph{finite} semidistributive lattices is a certain function which we denote here as $\kappa$, which maps completely join-irreducible elements to completely meet-irreducible elements (for precise definitions see Definition \ref{join-irreducible}).
Actually for finite lattices, $\kappa$ is a bijection precisely when the lattice is semidistributive  \cite[Corollary~2.55]{FJN}.
In order to define the $\kappa$-map we need the following characterization of completely join-irreducible elements:  $j$ is completely join-irreducible if and only if there is unique element $j_*$ covered by $j$.  
(See Definition~\ref{def:cover} for the definition of cover relations.)
Similarly,  $m$ is completely meet-irreducible if and only if  there is unique element $m_*$ which covers $m$. 
Following \cite{FJN}, we define the $\kappa$-map by using the above characterization of completely join-irreducible elements.

\begin{definition}\label{kappa-map}
Let $j$ be a completely join-irreducible element of a lattice $L$, and let $j_*$ be the unique element covered by $j$.
Define $\kappa(j)$ to be:
$$\kappa(j): = \text{unique max}\{x\in L: j_*\le x\text{ and } j\not\le x\},
\text{ when such an element exists.}$$
\end{definition}

In this paper we concentrate on $\tors \Lambda$, the lattice of torsion classes. Throughout the paper, $\Lambda$ is a finite dimensional algebra; any additional assumptions on the algebra $\Lambda$ will be explicitly stated. A torsion class $\calT$ is a subcategory of $\module \Lambda$ which is closed under isomorphisms, quotients, and extensions. We write $\tors \Lambda$ for the poset whose elements are torsion classes ordered by: $\calT \le \calS$ precisely when $\calT \subseteq \calS$. 
Observe that for two torsion classes $\calT$ and $\calS$ (not necessarily comparable), $\calT\meet \calS = \calT \cap \calS$ and $\calT\join \calS = \filt(\calT\cup \calS)$.
 It is well-known that $\tors \Lambda$ is a complete semidistributive lattice \cite{DIRRT, GM}.

In our first main result we explicitly compute $\kappa$ for the lattice of torsion classes.
Hence, $\kappa$ is well-defined, even though $\tors \Lambda$ may be infinite.
We use the fact the map $M\mapsto \calT_M=\filt(\Gen(M))$ is a bijection from the set of isoclasses of bricks to the set of completely join-irreducible torsion classes $\CJI(\tors \Lambda)$ (see \cite[Theorem~1.0.3]{BCZ}).
Recall that $M$ is defined to be a \emph{brick} provided that $\End_\Lambda(M)$ is a division ring.
 \begingroup
\setcounter{tmp}{1} 
\renewcommand\theproposition{\Alph{tmp}}
 \begin{theorem}[Theorem~\ref{inpapermain1}]\label{main1}
Let $\Lambda$ be a finite dimensional algebra, let $M$ be a $\Lambda$-brick.
Then $\kappa:\CJI(\tors \Lambda)\to \CMI(\tors \Lambda)$ is a bijection with 
$$\kappa(\calT_M) = \leftidx{^\perp}{M}$$
where $\leftidx{^\perp}{M}$ denotes the set $\{X\in\module\Lambda | \Hom_{\Lambda}(X,M)=0\}$.
\end{theorem}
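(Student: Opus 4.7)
The plan is to verify directly the three defining properties of $\kappa(\calT_M)$ for the torsion class ${}^\perp M$: that $(\calT_M)_*\subseteq {}^\perp M$, that $\calT_M\not\subseteq {}^\perp M$, and that ${}^\perp M$ is the maximum torsion class with these two properties. The easy parts are immediate: ${}^\perp M$ is closed under quotients and extensions by left exactness of $\Hom_\Lambda(-,M)$ together with the long exact sequence, and $\calT_M\not\subseteq{}^\perp M$ because $1_M$ is nonzero in the division ring $\End_\Lambda(M)$.

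The technical heart of the argument is the following \emph{brick lemma}: if $0\neq Y\subseteq M$ is a submodule of the brick $M$, then $\Hom_\Lambda(M/Y,M)=0$. Indeed, a nonzero $h\colon M/Y\to M$ precomposed with the quotient $\pi\colon M\twoheadrightarrow M/Y$ produces a nonzero endomorphism $h\pi$ of the brick $M$, which must be an isomorphism; but then $\pi$ is injective, forcing $Y=0$. Using this, I would identify $(\calT_M)_*=\calT_M\cap{}^\perp M$, which in particular gives $(\calT_M)_*\subseteq{}^\perp M$. The intersection is a torsion class, and it is strictly below $\calT_M$. To see that $\calT_M$ covers it, suppose $\calT_M\cap{}^\perp M\subsetneq\calU\subseteq\calT_M$; pick $Z\in\calU$ with a nonzero $f\colon Z\to M$ and set $Y=\image(f)\in\calU$, a nonzero submodule of $M$. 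Since $M/Y$ is a quotient of $M$ it lies in $\calT_M$, and by the brick lemma it also lies in ${}^\perp M$. Hence $M/Y\in\calT_M\cap{}^\perp M\subseteq\calU$, and the short exact sequence $0\to Y\to M\to M/Y\to 0$ forces $M\in\calU$ by closure under extensions, so $\calU=\calT_M$. Complete join-irreducibility of $\calT_M$ then identifies $(\calT_M)_*=\calT_M\cap{}^\perp M$.

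Maximality follows from the same ingredients. Let $\calS$ be a torsion class with $(\calT_M)_*\subseteq\calS$ and $M\notin\calS$, and suppose for contradiction that $\calS\not\subseteq{}^\perp M$. Let $Y\subseteq M$ be the largest submodule of $M$ lying in $\calS$; it exists by the finite length of $M$, and is nonzero because any $X\in\calS$ with a nonzero map to $M$ contributes a nonzero submodule of $M$ to $\calS$ via its image. The brick lemma gives $M/Y\in{}^\perp M$, while $M/Y\in\calT_M$ as a quotient of $M$, so $M/Y\in\calT_M\cap{}^\perp M=(\calT_M)_*\subseteq\calS$; the extension $0\to Y\to M\to M/Y\to 0$ then produces $M\in\calS$, contradicting $M\notin\calS$. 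This proves $\kappa(\calT_M)={}^\perp M$; bijectivity onto $\CMI(\tors\Lambda)$ should then follow from the known bijection $M\mapsto\calT_M$ between bricks and $\CJI(\tors\Lambda)$, together with a dual verification that ${}^\perp M$ is completely meet-irreducible with unique cover ${}^\perp M\vee\calT_M$. The main obstacle I anticipate is that in the infinite setting the existence of $(\calT_M)_*$ cannot be imported from finite semidistributive lattice theory, so the cover relation must be constructed directly via the brick lemma as above; all remaining steps reduce to the finite length of objects in $\module\Lambda$, without any finiteness hypothesis on $\tors\Lambda$.
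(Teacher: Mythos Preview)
Your proof is correct and follows essentially the same logical arc as the paper's, but it is more self-contained. The paper imports from \cite{BCZ} the facts that every proper factor of $M$ lies in $(\calT_M)_*$ and that $M\in((\calT_M)_*)^\perp$ (packaged there as properties of the minimal extending module for the cover $(\calT_M)_*\covered\calT_M$), and then routes the maximality argument through the anti-isomorphism $\tors\Lambda\leftrightarrow\torf\Lambda$: from $M\in\calS^\perp$ it deduces $\filt\Cogen M\subseteq\calS^\perp$ and hence $\calS\subseteq{}^\perp M$. You instead prove the key fact (your ``brick lemma'') from scratch, identify $(\calT_M)_*$ explicitly as $\calT_M\cap{}^\perp M$, and run the maximality step directly via the torsion part of $M$ in $\calS$, never leaving $\tors\Lambda$. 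The two arguments unwind to the same computation---a nonzero image $Y\subseteq M$ in $\calS$, the quotient $M/Y$ forced back into $\calS$, and $M\in\calS$ by extension closure---so the difference is one of packaging rather than idea. Your version has the advantage of not relying on the minimal-extending-module framework; the paper's version makes clearer how the result slots into the broader theory of brick labelings developed there.
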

\endgroup
\setcounter{proposition}{\value{proposition}-1}

\begin{remark}
\normalfont
A very recent result of Reading, Speyer, and Thomas implies that $\kappa$ is a bijection for each \emph{completely} semidistributive lattice.
See \cite[Theorem~3.2]{RST}.
\end{remark}

Finite semidistributive lattices are analogous to the unique factorization domains of ring theory.
Each element $x\in L$ has a unique ``factorization'' in terms of the join operation which is irredundant and lowest, called the \emph{canonical join representation} and denoted by $x=\CJR(x)=\Join A$ (see Definition \ref{cjr}). 
The dual notion is the \emph{canonical meet representation}, denoted by $x=\CMR(x)=\Meet B$.
For finite semidistributive lattices, the map $\kappa$ has the pleasant property of respecting factorizations, i.e. $\kappa$ maps elements of any canonical join representation to the elements of some canonical meet representation, 
i.e. $\Meet \{\kappa(j) : j\in \CJR(x)\}=\CMR(y)$ for some $y\in L$.

With this fact in mind, we make the following definition of the $\bar\kappa$-map, which is an extension  of the $\kappa$-map to possibly infinite semidistributive lattices and also to a larger class of elements (not just completely join-irreducible elements).

\begin{definition}\label{extended_kappa}
Let $L$ be a (possibly infinite) semidistributive lattice. 
Let $x$ be an element which has a canonical join representation such that $\kappa(j)$ is defined for each $j\in\CJR(x)$. 
Define $$\bar{\kappa}(x) = \Meet \{\kappa(j): j\in \CJR(x)\}.$$ 
\end{definition}

It is clear that this generalized $\bar\kappa$ will not exist in general. 
In particular, if any $j\in \CJR(x)$ is not completely join-irreducible, then $\bar{\kappa}(x)$ is not defined. 
And even if all $j\in \CJR(x)$ are completely join-irreducible, $\kappa(j)$ might not exist for some $j$.
However when $\bar{\kappa}$ does exist it has quite nice properties, which we  
show 
in the next statements for the lattices of torsion classes.

 \begingroup
\setcounter{tmp}{2} 
\renewcommand\theproposition{\Alph{tmp}}
\begin{proposition}[Corollary~\ref{kappa tors}] 
Let $\Lambda$ be a finite dimensional algebra. Let 
$\calT$ be a torsion class which has a canonical join representation of the following form: $\CJR(\calT)=\Join_{\alpha\in A} \calT_{M_\alpha}$, where $M_{\alpha}$ are $\Lambda$-bricks. 
Then $\bar\kappa(\calT)$ is defined and is of the form:
\[\bar{\kappa}(\calT) = \bigcap_{\alpha\in A} \leftidx{^\perp}M_\alpha.\]
\end{proposition}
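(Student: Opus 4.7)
The proof is essentially a direct unpacking of the definition of $\bar{\kappa}$ combined with Theorem~A. The plan is to verify two things in order: first, that $\bar{\kappa}(\calT)$ is actually defined under the hypotheses, and second, that it equals the claimed intersection.

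For the first step, recall from the bijection $M\mapsto \calT_M = \filt(\Gen(M))$ cited before Theorem~A that each $\calT_{M_\alpha}$ is a completely join-irreducible torsion class (since each $M_\alpha$ is a brick). Therefore Theorem~A applies to each summand of $\CJR(\calT)$ and gives that $\kappa(\calT_{M_\alpha}) = \leftidx{^\perp}{M_\alpha}$ is defined. Per Definition~\ref{extended_kappa}, this is exactly the condition needed for $\bar{\kappa}(\calT)$ to exist.

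For the second step, I would simply compute
\[
\bar{\kappa}(\calT) = \Meet_{\alpha\in A}\kappa(\calT_{M_\alpha}) = \Meet_{\alpha \in A} \leftidx{^\perp}{M_\alpha},
\]
using Definition~\ref{extended_kappa} together with Theorem~A. The last step is to recognize that in $\tors\Lambda$ the meet operation is intersection (as recalled in the introduction: $\calT \meet \calS = \calT \cap \calS$, extended to arbitrary meets by completeness of the lattice), so $\Meet_{\alpha\in A}\leftidx{^\perp}{M_\alpha} = \bigcap_{\alpha\in A}\leftidx{^\perp}{M_\alpha}$. Here one should briefly note that each $\leftidx{^\perp}{M_\alpha}$ is a torsion class — closed under quotients and extensions by standard arguments — so the intersection indeed lives in $\tors\Lambda$ and agrees with the lattice-theoretic meet.

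There is no real obstacle: the statement is a corollary in the paper precisely because, once Theorem~A identifies $\kappa$ on completely join-irreducible torsion classes with the left-perpendicular category of the corresponding brick, the extended map $\bar{\kappa}$ inherits the formula termwise. The only subtlety worth being careful about is making sure that the meet in question — which is an arbitrary, possibly infinite meet indexed by $A$ — is still given by intersection, but this follows from the completeness of $\tors\Lambda$ stated in the introduction.
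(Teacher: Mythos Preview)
Your proposal is correct and matches the paper's approach: the paper states this as an immediate corollary of Theorem~\ref{inpapermain1} and the definition of $\bar{\kappa}$, with no separate proof given. Your unpacking of the definition, application of Theorem~A to each $\calT_{M_\alpha}$, and identification of the meet with intersection in $\tors\Lambda$ is exactly the intended argument.
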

\endgroup

In order for $\bar\kappa$ to be defined as in this proposition, it is important that the torsion class $\calT$ has this nice canonical join representation. This will be important in the next theorem as well, so we define the following subset of torsion classes:
$$\tors_0(\Lambda):= \{\calT: \CJR(\calT)=\Join_{\alpha\in A} \calT_{M_\alpha}\text{where }  \{M_\alpha\}_{\alpha\in A} \text{ is a set of } \Lambda\text{-bricks}\}.$$

We also want to consider the compositions of $\bar\kappa$.
As one may expect, for general semidistributive lattice $L$, even if $\bar\kappa(x)$ exists, $\bar{\kappa}(\bar\kappa(x))$ 
might not be defined. And it may not be defined even when we consider lattice 
$\tors \Lambda$, since $\bar{\kappa}(\bar\kappa(\calT))$ might not be defined if $\bar{\kappa}(\calT)$ is not in $\tors_0(\Lambda)$.

A nice situation is when  $\Lambda$ is $\tau$-tilting finite. Then the lattice $\tors \Lambda$ is finite \cite{IRTT} and semidistributive.
In this case, $\bar\kappa^n$ is well-defined for all $n$ because every  element in $\tors \Lambda$ (and hence $\bar{\kappa}(\calT)$) has canonical join representation consisting of completely join-irreducibles (see Proposition \ref{CJR-join-irreducibles}). So by \cite{BCZ}  
$\CJR(\bar{\kappa}(\calT))=\Join_{\alpha\in A} \calT_{M_\alpha}$ and therefore is in $\tors_0 \Lambda$.
We consider this special case in Theorem E.

Now, we concentrate on hereditary finite dimensional algebras, where the structure of module categories is very well understood.
Let $H$ be a hereditary finite dimensional algebra. 
We show that $\bar\kappa$ sends functorially finite torsion classes to functorially finite torsion classes (Corollary \ref{k ftors}).
Since the class of functorially finite torsion classes is contained in $\tors_0\Lambda$ by
 Proposition \ref{first inclusion}, it follows that
 $\bar{\kappa}^n$ is well defined for functorially finite torsion classes for hereditary algebras.

We let $\tau$ denote Auslander-Reiten translation.
Our next result relates $\bar\kappa$-map to the Auslander-Reiten translation for hereditary algebras. 

 \begingroup
\setcounter{tmp}{3} 
\renewcommand\theproposition{\Alph{tmp}}
\begin{theorem}[Corollary~\ref{ar noninj}]\label{kappa_tau}
Let $H$ be a hereditary finite dimensional algebra and $M$ a brick such that $\calT_M$ is functorially finite. Then
\[\bar{\kappa}^2(\calT_M) = \calT_{\bar\tau^{-1} M}.\]
Here $\bar\tau^{-1} M=\tau^{-1} M$ for non-injective modules $M$ and $\bar\tau^{-1} I(S)=P(S)$ where $I(S)$ and $P(S)$ are the injective envelope and projective cover of the same simple $S$.
\end{theorem}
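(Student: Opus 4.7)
My plan is to apply $\bar\kappa$ twice, using Theorem A and Corollary \ref{kappa tors} at each step, and then to identify the resulting intersection of left perpendicular categories with $\calT_{\bar\tau^{-1}M}$.

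The first application of $\bar\kappa$ is immediate. Since $M$ is a brick, $\calT_M$ is completely join-irreducible, so $\CJR(\calT_M)=\{\calT_M\}$ and Theorem A gives
$$\bar\kappa(\calT_M)=\kappa(\calT_M)={}^\perp M.$$
Because $\calT_M$ is functorially finite, Corollary \ref{k ftors} ensures that ${}^\perp M$ is also functorially finite, and by Proposition \ref{first inclusion} it therefore lies in $\tors_0(H)$, which allows $\bar\kappa$ to be applied a second time.

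For the second application, write the canonical join representation as $\CJR({}^\perp M)=\bigvee_{\alpha\in A}\calT_{N_\alpha}$; the bricks $N_\alpha$ correspond, under the Ingalls--Thomas bijection between functorially finite torsion classes and functorially finite wide subcategories, to the simple objects of $W({}^\perp M)$. Corollary \ref{kappa tors} then yields
$$\bar\kappa^{2}(\calT_M)=\bar\kappa({}^\perp M)=\bigcap_{\alpha\in A}{}^\perp N_\alpha,$$
and the theorem reduces to showing that this intersection equals $\calT_{\bar\tau^{-1}M}$. I see two routes for this step. The direct one exploits Auslander--Reiten duality $D\Hom(X,M)\cong \Ext^{1}(\tau^{-1}M,X)$, valid for hereditary $H$ and non-injective $M$, to recast ${}^\perp M$ as the $\Ext^{1}$-orthogonal of $\tau^{-1}M$ and then to analyze the $N_\alpha$ and the intersection by module-theoretic computations around $\tau^{-1}M$, with the injective case $M=I(S)$ handled separately and $P(S)$ taking the role of the missing $\tau^{-1}M$. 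The more structural route, which I expect the authors to follow, is to invoke the paper's identification of $\bar\kappa$ on $\tors H$ with Ringel's $\epsilon$-map on functorially finite wide subcategories, reducing the statement to the classical fact that $\epsilon^{2}$ agrees with $\bar\tau^{-1}$ on bricks.

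The main obstacle is precisely this final identification. The bricks $N_\alpha$ need not contain $\bar\tau^{-1}M$ among them: for instance, for $H=kA_2$ with $M$ the injective simple at the sink, the canonical join representation of ${}^\perp M$ has a single brick which is \emph{not} $\bar\tau^{-1}M=P(S)$, and yet its perpendicular category coincides with $\calT_{P(S)}$. The identification of $\bigcap_{\alpha}{}^\perp N_\alpha$ with $\calT_{\bar\tau^{-1}M}$ is therefore genuinely structural, and the compatibility between $\bar\kappa$ and Ringel's $\epsilon$-map, together with the correct boundary convention $\bar\tau^{-1}I(S)=P(S)$, is what makes it work uniformly.
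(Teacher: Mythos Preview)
Your proposal is correct and the structural route you anticipate is exactly what the paper does: it applies Theorem~\ref{k and d} twice to obtain $\alpha\bar\kappa^2(\calT_M)=\epsilon^2\alpha(\calT_M)=\epsilon^2\filt\{M\}$, invokes Ringel's theorem that $\epsilon^2=\bar\tau^{-1}$ on functorially finite wide subcategories, and then uses the Ingalls--Thomas bijection to return to torsion classes. Your intermediate step of writing out the bricks $N_\alpha$ and the intersection $\bigcap_\alpha{}^\perp N_\alpha$ is unnecessary once Theorem~\ref{k and d} is applied directly at the level of $\alpha$, which sidesteps the obstacle you describe.
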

\endgroup 
\setcounter{proposition}{\value{proposition}-1}

Our final theorem shows that the $\bar\kappa$-map on torsion categories is essentially Ringel's $\epsilon$-map on the wide subcategories.
In~\cite{R}, Ringel defined a map $\epsilon:\wide H\to \wide H$ which generalizes a well-studied automorphism of the noncrossing partition lattice to the setting of wide subcategories.
$$\epsilon(\calW):=\leftidx{^{\perp_{0,1}}}\calW \text{ where}$$
 $$\leftidx{^{\perp_{0,1}}}\calW=\{X\in\module H : \Ext^1_{H}(X,M)=0= \Hom_{H}(X,M),  \forall M\in\calW\}.$$
 For the relation between wide subcategories and torsion classes,
 we use
$\alpha: \tors H\to \wide H$ which is defined as 
 $$\alpha(\calT):= \{X\in \calT: \forall (g:Y\to X)\in \calT,\, \ker(g)\in \calT\}.$$

 \begingroup
\setcounter{tmp}{4} 
\renewcommand\theproposition{\Alph{tmp}}
\begin{theorem}[Theorem~\ref{k and d}]\label{main}
Let $H$ be a finite dimensional hereditary algebra.
Let $\alpha$ and $\epsilon$ be defined as above.
Then the following diagram commutes:
\begin{center}
\begin{tikzpicture}[scale=0.55]
  \node (A) at (0,0) {$\tors_0 H$}; 
  \node(B) at (4,0) {$\tors H$};
  \node (C) at (0,-3) {$\wide H$}; 
  \node (D) at (4,-3) {$\wide H$.};
    \node at (2,-2.75) {$\epsilon$};
    \node at (2,.25) {$\bar{\kappa}$};
    \node at (-.25,-1.5) {$\alpha$};
    \node at (4.25,-1.5) {$\alpha$};
  \draw[->] (A.east)--(B.west);
  \draw[->] (A.south)--(C.north);
  \draw[->] (C.east)--(D.west);
  \draw[->] (B.south)--(D.north);
\end{tikzpicture}
\end{center}
\end{theorem}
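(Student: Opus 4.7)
The strategy is to reduce both $\alpha\circ\bar\kappa$ and $\epsilon\circ\alpha$ to the same explicit perpendicular set attached to the semibrick $\{M_\alpha\}_{\alpha\in A}$ labeling the canonical join representation $\CJR(\calT)=\Join_{\alpha\in A}\calT_{M_\alpha}$. Two inputs drive the reduction. First, by results of \cite{BCZ}, the family $\{M_\alpha\}$ is a semibrick, so $\Hom_H(M_\alpha,M_\beta)=0$ for $\alpha\ne\beta$, and $\alpha(\calT)$ is the wide subcategory $\filt\{M_\alpha\}$ whose simple objects are precisely these bricks. Second, Proposition~B identifies $\bar\kappa(\calT)=\bigcap_\alpha\leftidx{^\perp}{M_\alpha}$. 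Since $H$ is hereditary, the long exact sequence shows that $\leftidx{^{\perp_{0,1}}}{(-)}$ is closed under extensions, so $\epsilon(\alpha(\calT))=\leftidx{^{\perp_{0,1}}}{\filt\{M_\alpha\}}=\leftidx{^{\perp_{0,1}}}{\{M_\alpha\}}$. Commutativity of the diagram therefore reduces to the identity
\[
\alpha\Bigl(\bigcap_{\alpha\in A}\leftidx{^\perp}{M_\alpha}\Bigr) \;=\; \leftidx{^{\perp_{0,1}}}{\{M_\alpha\}_{\alpha\in A}}.
\]

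For the inclusion $\supseteq$, suppose $X$ satisfies $\Hom_H(X,M_\alpha)=\Ext^1_H(X,M_\alpha)=0$ for every $\alpha$. Clearly $X\in\bigcap\leftidx{^\perp}{M_\alpha}$; to verify the kernel condition, take $f\colon Y\to X$ with $Y\in\bigcap\leftidx{^\perp}{M_\alpha}$, and let $K=\ker f$, $I=\image f\hookrightarrow X$. Applying $\Hom_H(-,M_\alpha)$ to $0\to I\to X\to X/I\to 0$ (using heredity together with the $\Hom$- and $\Ext^1$-vanishing on $X$) yields $\Ext^1_H(I,M_\alpha)=0$, and then applying $\Hom_H(-,M_\alpha)$ to $0\to K\to Y\to I\to 0$ (using $\Hom_H(Y,M_\alpha)=0$) yields $\Hom_H(K,M_\alpha)=0$; hence $K\in\bigcap\leftidx{^\perp}{M_\alpha}$.

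For the reverse inclusion, take $X\in\alpha\bigl(\bigcap\leftidx{^\perp}{M_\alpha}\bigr)$; the $\Hom$-vanishing is immediate, so it suffices to rule out a nonzero extension $\xi\colon 0\to M_\alpha\to E\to X\to 0$. Assuming such $\xi$ exists, I claim $E\in\bigcap\leftidx{^\perp}{M_\beta}$. For $\beta\ne\alpha$, the semibrick property combined with $\Hom_H(X,M_\beta)=0$ forces $\Hom_H(E,M_\beta)=0$; for $\beta=\alpha$, the long exact sequence identifies $\Hom_H(E,M_\alpha)$ with the kernel of the connecting map $\End_H(M_\alpha)\to\Ext^1_H(X,M_\alpha)$, $\phi\mapsto\phi_*\xi$, which vanishes because $\End_H(M_\alpha)$ is a division ring and $\xi\ne 0$. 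Thus the surjection $E\twoheadrightarrow X$ has domain in $\bigcap\leftidx{^\perp}{M_\beta}$ and kernel $M_\alpha\notin\bigcap\leftidx{^\perp}{M_\beta}$ (since $\End_H(M_\alpha)\ne 0$), contradicting $X\in\alpha\bigl(\bigcap\leftidx{^\perp}{M_\beta}\bigr)$.

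I expect the main obstacle to lie in the first reduction step: cleanly extracting from \cite{BCZ} the identification $\alpha(\calT)=\filt\{M_\alpha\}$ together with the semibrick property of $\{M_\alpha\}$ for arbitrary $\calT\in\tors_0 H$, especially when $A$ is infinite and $\alpha(\calT)$ need not be of finite length. Once these inputs are in place, the key identity above is a mechanical chase through long exact sequences, exploiting heredity and the division-ring endomorphism property of bricks.
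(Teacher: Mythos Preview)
Your argument is correct and reaches the same target identity
\[
\alpha\Bigl(\bigcap_{\alpha\in A}\leftidx{^\perp}{M_\alpha}\Bigr)=\leftidx{^{\perp_{0,1}}}\{M_\alpha\}_{\alpha\in A}
\]
as the paper, and the reduction steps (identifying $\alpha(\calT)=\filt\{M_\alpha\}$ via Corollary~\ref{CJC and simple objects} and the semibrick property via Theorem~\ref{torsion_cjr}) are exactly the inputs the paper assembles, so your worry about that step is unfounded.

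Where you diverge is in \emph{how} you prove the key identity. The paper factors it as
\[
\alpha\Bigl(\bigcap_i\leftidx{^\perp}M_i\Bigr)\;\overset{\text{Prop.~\ref{wide many perp}}}{=}\;\bigcap_i\alpha(\leftidx{^\perp}M_i)\;\overset{\text{Cor.~\ref{wide perp cor}}}{=}\;\bigcap_i\leftidx{^{\perp_{0,1}}}M_i,
\]
and the hard inclusion in Proposition~\ref{wide many perp} is obtained by first invoking Proposition~\ref{simple=coext} (simple objects of $\alpha(\calT)$ are exactly the minimal co-extending modules of $\calT^\perp$), then analyzing a non-split extension inside the torsion-free class $\calF=\filt(\bigcup\Cogen M_i)$. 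Your proof bypasses both intermediate lemmas: you show the two inclusions directly by chasing long exact sequences and exploiting that $\End_H(M_\alpha)$ is a division ring, so that the connecting map $\End_H(M_\alpha)\to\Ext^1_H(X,M_\alpha)$ is injective on a non-split extension, forcing $E\in\bigcap_\beta\leftidx{^\perp}M_\beta$ and then contradicting the kernel condition defining $\alpha$. This is more elementary and self-contained; the paper's route, on the other hand, isolates Proposition~\ref{simple=coext} as a result of independent interest linking wide-subcategory simples to the combinatorics of cover relations.
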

\endgroup 
\setcounter{proposition}{\value{proposition}-1}
\begin{remark}
\normalfont
The dynamics of the $\bar\kappa$-map have been studied in \cite{TW1, TW2} in the special case when $L$ is finite, semidistributive and \emph{extremal}.
Such lattices are also called \emph{trim}.
When $\Lambda$ has finite-representation type and no cycles, 
 then $\tors \Lambda$ is extremal. 
In this setting, it was shown that $\bar\kappa$ can be factored as a composition of local moves called ``flips''.
See \cite[Theorem~1.1 and Corollary~1.5]{TW1}.
\end{remark}

\begin{remark}\label{Kreweras}
\normalfont
Let $W$ be a finite Weyl group.
When $H$ is hereditary and of Dynkin type $W$, then  the lattice of wide subcategories is isomorphic to the \emph{noncrossing partition lattice} $\NC(W)$ \cite{IS, IT, R}.
Under this isomorphism, the inverse of Ringel's map $\epsilon$ (called $\delta$ in \cite{R}) is equal to a classical lattice automorphism known as the \emph{Kreweras complement}, which we write here as $\Kr: \NC(W)\to \NC(W)$.
Theorem~\ref{main} implies that $\bar{\kappa}^{-1}$ is essentially equal to $\Kr$.
The combinatorics of the Kreweras complement are well-studied. 
For example, $Kr^{2h}$ is equal to the identity, where $h$ is the \emph{Coxeter number} of $W$.
For any orbit $\calO$ we have 
\begin{equation}\label{formula}
\frac{1}{|\calO|} \sum_{P\in \calO}|P| = r/2
\end{equation}
where $|P|$ is the number parts in $P$, and $r$ is the number of simples in $\Lambda$.
\end{remark}

When $\Lambda$ is $\tau$-tilting finite, we have an analogue of Formula~\ref{formula} in terms of $\bar{\kappa}$.
 \begingroup
\setcounter{tmp}{5} 
\renewcommand\theproposition{\Alph{tmp}}
\begin{theorem}\label{Kr_theorem}
Let $\Lambda$ be $\tau$-tilting finite, and let $r$ be the number of simples in $\module\Lambda$.
For any $\calT\in \tors \Lambda$ let $|\calT|$ denote the number of canonical joinands of $\calT$.
Then for any $\bar{\kappa}$-orbit $\calO$ we have
\[\frac{1}{|\calO|} \sum_{\calT\in \calO}|\calT| = r/2\]
\end{theorem}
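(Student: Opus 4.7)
The plan is to establish the pointwise identity $|\calT| + |\bar{\kappa}(\calT)| = r$ for every $\calT \in \tors\Lambda$, and then average it over the orbit. Since $\Lambda$ is $\tau$-tilting finite, $\tors\Lambda$ is finite semidistributive and $\bar\kappa$ is a bijection of $\tors\Lambda$ (it restricts to a bijection on each orbit $\calO$). Summing the identity over $\calO$ therefore yields
\[
r\,|\calO| \;=\; \sum_{\calT\in\calO}|\calT| \;+\; \sum_{\calT\in\calO}|\bar\kappa(\calT)| \;=\; 2\sum_{\calT\in\calO}|\calT|,
\]
which is exactly the desired formula.

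To establish $|\calT| + |\bar\kappa(\calT)| = r$, I would combine two ingredients. The first is the fact, standard in the theory of finite semidistributive lattices, that canonical joinands of an element $x$ are in bijection with the lower covers of $x$: for each lower cover $y \covered x$ there is a unique minimal CJI $j$ with $y\vee j = x$, and $\CJR(x)$ is exactly this set. Dually, $|\CMR(x)|$ equals the number of upper covers of $x$. The second ingredient is the $\tau$-tilting interpretation of the Hasse quiver of $\tors\Lambda$ due to Adachi--Iyama--Reiten and DIRRT: $\tors\Lambda$ is the oriented exchange graph of support $\tau$-tilting pairs $(M,P)$, and since every such pair has exactly $r$ indecomposable summands, each of which admits a unique mutation, every torsion class has exactly $r$ neighbors in total in the Hasse diagram. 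Consequently
\[
|\CJR(\calT)| + |\CMR(\calT)| \;=\; (\text{lower covers of }\calT) + (\text{upper covers of }\calT) \;=\; r.
\]

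Next I would apply this identity to $\bar\kappa(\calT)$. By Definition~\ref{extended_kappa}, $\bar\kappa(\calT) = \bigwedge_{j\in\CJR(\calT)}\kappa(j)$; the classical fact (recalled in the paper before Definition~\ref{extended_kappa}) that $\kappa$ transports canonical join representations to canonical meet representations guarantees that this expression is in fact $\CMR(\bar\kappa(\calT))$, so $|\CMR(\bar\kappa(\calT))| = |\CJR(\calT)| = |\calT|$. Applying $|\CJR(y)| + |\CMR(y)| = r$ to $y = \bar\kappa(\calT)$ gives
\[
|\bar\kappa(\calT)| + |\calT| \;=\; |\CJR(\bar\kappa(\calT))| + |\CMR(\bar\kappa(\calT))| \;=\; r,
\]
which is the identity needed above.

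The main obstacle is the second ingredient: that every torsion class has exactly $r$ Hasse neighbors when $\Lambda$ is $\tau$-tilting finite. This is not purely lattice-theoretic (finite semidistributive lattices do not in general have constant covering degree), but follows from the bijection between torsion classes and support $\tau$-tilting pairs together with the uniqueness of mutation at each summand. Once this is cited from the relevant literature, the rest of the argument is the short chain of equalities above.
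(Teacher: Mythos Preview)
Your proof is correct and follows essentially the same strategy as the paper: both hinge on the regularity of $\tors\Lambda$ (total number of Hasse neighbors equals $r$) together with the fact that $\kappa$ carries $\CJR(\calT)$ bijectively to $\CMR(\bar\kappa(\calT))$, yielding $|\calT|+|\bar\kappa(\calT)|=r$. Your averaging argument over the orbit is in fact slightly cleaner than the paper's version, which instead splits into the cases $|\calT|\ne r/2$ and $|\calT|=r/2$ and pairs consecutive elements of the orbit; your summation avoids that case analysis entirely while using the same ingredients.
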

%
\endgroup

The paper is organized in the following way: 
In Section~\ref{torsion sec} we review torsion classes, and a useful labeling of the cover relations in $\tors \Lambda$ by \emph{minimal extending modules}.
In Section~\ref{lattice sec} we review the necessary lattice-theoretic background.
In Section~\ref{kappa sec} we explicitly compute $\kappa$ and $\bar{\kappa}$ .
In Section~\ref{main sec}, we prove our main results relating $\kappa$ to wide subcategories. 
Finally, in Section~\ref{sec:AR} we relate $\bar{\kappa}$ to Auslander-Reiten translation.



\section{Torsion Classes Background}\label{torsion sec}

\subsection{Torsion pairs}

In this section we recall the definition of a torsion pair.
Throughout the section $\Lambda$ is a finite dimensional algebra. 
Denote by $\module \Lambda$ 
the category of finitely generated modules over $\Lambda$. 


\begin{definition}\label{torsion_pair}
A pair $(\calT, \calF)$ of full subcategories of $\module \Lambda$ is a \emph{torsion pair} if each of the following holds:
\begin{enumerate}
\item $\Hom_{\Lambda} (M,N) = 0$ for all $M\in \calT$ and $N\in \calF$.
\item $\Hom_{\Lambda}(M,-)|_\calF  = 0$ implies that $M\in \calT$.
\item $\Hom_{\Lambda} (-,N)|_\calT = 0$ implies that $N\in \calF$.
\end{enumerate}
\end{definition}
A full subcategory $\calT$ is a torsion class if and only if $(\calT, \calT^\perp)$ is a torsion pair \cite[Proposition~V.I.1.4]{ASS}.


For each torsion pair $(\calT,\calF)$ and each module $M\in \module \Lambda$ there is a canonical short exact sequence 
\[0\to tM\to M\to M/{tM}\to 0\]
such that $tM\in \calT$ and $M/{tM}\in \calF$ \cite[Definition~V.I.1.3]{ASS}.
We will use this fact in Section~\ref{wide} to make an important connection to the wide subcategories of $\Lambda$.

Throughout the paper, we study the lattice (poset) of torsion classes also denoted $\tors \Lambda$ in which $\calS \le \calT$ whenever $\calS \subseteq \calT$. 
More details are given in Section \ref{lattice sec}.

\subsection{Minimal inclusions of torsion classes}
In this section we consider minimal inclusions of torsion classes.
Each minimal inclusion is a cover relation or an edge in the Hasse diagram for the poset $\tors \Lambda$.
We also review a certain labeling of these edges by brick modules introduced independently in \cite{A, BCZ, DIRRT}.

\begin{definition}\label{def:min-ext-module}\cite[Definition 1.0.1]{BCZ}
A module $M$ is a \newword{minimal extending module for $\calT$} if it satisfies the following three properties:
\begin{enumerate}[label={(P\arabic*)}]
 \item Every proper factor of \label{propertyone}
   $M$ is in $\calT$;
  \item If $0\rightarrow M\rightarrow X
    \rightarrow T\rightarrow 0$ is a non-split exact sequence with
    $T\in \calT$, then $X\in \calT$; \label{propertytwo}
  \item $\Hom(\calT, M)=0$. \label{propertythree}
  \end{enumerate}
  Denote by $\ME(\calT)$ the set of isoclasses of minimal extending modules of~$\calT$.
\end{definition}
\begin{figure}[h]
\centering
\scalebox{.7}{
\begin{tikzpicture}
\draw[fill] (2,0) circle [radius=.1];
\draw[fill] (.5,1) circle [radius=.1];
\draw[fill] (3.5,1) circle [radius=.1];
\draw[fill] (.5,3) circle [radius=.1];
\draw[fill] (2,4) circle [radius=.1];

\draw [black, line width=.65 mm] (2,0) to (.5,1);
\draw [black, line width=.65 mm] (2,0) to (3.5,1);
\draw [black, line width=.65 mm] (.5,1) to (.5,3);
\draw [black, line width=.65 mm] (2,4) to (.5,3);
\draw [black, line width=.65 mm] (2,4) to (3.5,1);

\node [below] at (2,-.25) {$0$};
\node [left] at (.25,1) {$\add(S_1)$};
\node [left] at (.25,3) {$\add(S_1, P_1)$};
\node [above] at (2,4.25) {$\module \Lambda$};
\node [right] at (3.75,1) {$\add(S_2)$};

\node [left] at (1.25,.25) {$S_1$};
\node [left] at (1.25,3.65) {$S_2$};
\node [left] at (.55,2.1) {$P_1$};
\node [right] at (2.75,.25) {$S_2$};
\node [right] at (2.75,2.7) {$S_1$};

\end{tikzpicture}}
\caption{\label{fig_brick_labeling} We label edges 
of the lattice
$\tors A_2$ 
(of the hereditary algebra $kA_2$ where $A_2 =(1\rightarrow 2)$)
with the corresponding minimal extending modules.}
\end{figure}
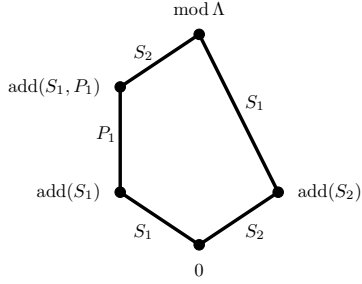 
Minimal extending modules arise in studying the minimal inclusions of torsion classes. 
We say that $\calS$ \emph{covers} $\calT$, denoted by $\calS\covers \calT$, if $\calT\subsetneq \calS$ and there is no torsion class $\calY$ such that $\calT\subsetneq\calY\subsetneq S$.
Each pair $\calS \covers \calT$ is a \emph{cover relation}.
(See Definition~\ref{def:cover}.)
For any cover relation $\calS\covers\calT$, there is a unique minimal extending module $M$ such that $\calS$ is the closure of $\calT\bigcup \{M\}$ taking iterative extensions.  

For any set $\calM$ of $\module \Lambda$, we write
$\filt\calM$ for the set of modules $M$ such that there is a sequence of submodules $0=M_0\subsetneq M_1\subsetneq \cdots \subsetneq M_l=M$ and the quotients $M_{i+1}/M_i\in\ind\calM$ for each $i\in \{1,2\ldots, i-1\}$.

\begin{theorem}\label{main: covers}\cite[Theorem 1.0.2]{BCZ}
Let $\calT$ be a torsion class over $\Lambda$ and let $[M]$ be the isoclass of the module $M$.
The map $$\eta_\calT: [M]\mapsto \filt(\calT\cup \{M\})$$ 
defines a set bijection $\eta_\calT:  \{\ME(\calT)\} \longleftrightarrow \{\calS \in \tors \Lambda\ |\ \calS\covers \calT\}$.
\end{theorem}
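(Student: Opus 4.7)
The plan is to establish the bijection $\eta_\calT$ in three parts: the map is well-defined (its image is a torsion class that actually covers $\calT$), the map is injective, and the map is surjective. Well-definedness is the straightforward part. The subcategory $\calS_M := \filt(\calT\cup\{M\})$ is extension-closed by construction; for quotient-closure I would argue by filtration induction: given $X\in\calS_M$ with filtration $0=X_0\subset X_1\subset\cdots\subset X_n=X$ having factors in $\calT\cup\{M\}$ and any submodule $Y\subseteq X$, the induced filtration on $X/Y$ by $(X_i+Y)/Y$ has factors that are quotients of the original factors, and P1 guarantees that a proper quotient of $M$ lies in $\calT$, so every factor remains in $\calT\cup\{M\}$. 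Also $\calT\subsetneq\calS_M$, since $M\in\calT$ would place the identity map of $M$ into $\Hom(\calT,M)$, contradicting P3.

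For surjectivity and for the claim that $\calS_M$ is actually a cover, the plan is to take any torsion class $\calY$ with $\calT\subsetneq\calY$ --- either a given cover $\calY\covers\calT$ (for surjectivity), or a hypothetical intermediate class $\calT\subsetneq\calY\subseteq\calS_M$ (for the covering claim) --- choose $N\in\calY\setminus\calT$ of minimal dimension, and show $N\in\ME(\calT)$. Property P1 follows from the minimality of $N$ and the quotient-closure of $\calY$: any proper quotient of $N$ lies in $\calY$ with strictly smaller dimension, hence must lie in $\calT$. Property P3 follows from the torsion decomposition $0\to tN\to N\to N/tN\to 0$ with respect to $(\calT,\calT^\perp)$: the quotient $N/tN$ lies in $\calT^\perp\cap\calY$, and minimality forces $tN=0$, so $N\in\calT^\perp$.

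The main obstacle is verifying property P2 for $N$. Given a non-split short exact sequence $0\to N\to X\to T\to 0$ with $T\in\calT$, let $i\colon N\hookrightarrow X$ denote the injection and $\pi\colon X\onto X/tX$ the projection to the $\calT$-torsion-free quotient. The image of $\pi\circ i$ is a quotient of $N$ and a submodule of $X/tX\in\calT^\perp$, hence itself lies in $\calT^\perp$; a proper nonzero quotient of $N$ would lie in $\calT$ by P1, hence in $\calT\cap\calT^\perp=0$, a contradiction. So either $\pi\circ i=0$ or $\pi\circ i$ is injective. In the injective case $i(N)\cap tX=0$, and the quotient $X/(i(N)+tX)$ is simultaneously a quotient of $T\in\calT$ and of $X/tX\in\calT^\perp$, hence zero; so $X=i(N)\oplus tX$ and the composite $tX\hookrightarrow X\onto T$ is an isomorphism, which splits the sequence, a contradiction. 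Thus $\pi\circ i=0$; applying $\Hom(-,X/tX)$ to the defining sequence and using $\Hom(T,X/tX)=0$ (since $T\in\calT$ and $X/tX\in\calT^\perp$), the element $\pi\in\Hom(X,X/tX)$ restricts to zero on $N$ and so comes from $\Hom(T,X/tX)=0$, i.e., $\pi=0$, i.e., $X\in\calT$.

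Both injectivity of $\eta_\calT$ and the final step of the covering/surjectivity argument reduce to a single uniqueness sublemma: if $N,M\in\ME(\calT)$ and $N\in\calS_M$, then $N\cong M$. To prove it, take any filtration of $N$ by $\calT\cup\{M\}$; the bottom nonzero factor cannot lie in $\calT$ (it would inject into $N$, contradicting P3 of $N$), so it must be $M$, yielding an exact sequence $0\to M\to N\to T\to 0$ with $T$ a proper quotient of $N$, hence $T\in\calT$ by P1 of $N$. If $T\neq 0$, splitness would inject $T$ into $N$ and contradict P3 of $N$, while non-splitness would force $N\in\calT$ via P2 of $M$, contradicting P3 of $N$. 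Therefore $T=0$ and $N\cong M$. In the covering step, the sublemma applied to the $N$ built above gives $N\cong M$ and hence $\calS_N=\calS_M\subseteq\calY$, so $\calY=\calS_M$. Injectivity follows by applying the sublemma to $M'\in\calS_{M'}=\calS_M$. For surjectivity, the construction applied to an arbitrary cover $\calY\covers\calT$ produces $N\in\ME(\calT)$ with $\calT\subsetneq\calS_N\subseteq\calY$, so by the cover hypothesis $\calS_N=\calY$.
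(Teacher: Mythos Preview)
There is a genuine gap in your verification of property~\ref{propertytwo} for the minimal-dimension module $N$. In the ``injective case'' you assert that $X/(i(N)+tX)$ is zero because it is simultaneously a quotient of $T\in\calT$ and of $X/tX\in\calT^\perp$. But torsion-free classes are closed under submodules and extensions, \emph{not} under quotients, so being a quotient of $X/tX$ does not place this module in $\calT^\perp$. The step is not merely unjustified --- it is false. Take $\Lambda=kA_2$ with $A_2=(1\to 2)$, $\calT=\add(S_1)$, and $\calY=\module\Lambda$. The minimal-dimension module in $\calY\setminus\calT$ is $N=S_2$, and the non-split sequence $0\to S_2\to P_1\to S_1\to 0$ has $tP_1=0$, so $X/(i(N)+tX)=P_1/S_2\cong S_1\ne 0$. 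Worse, $P_1\notin\calT$, so $N=S_2$ fails~\ref{propertytwo} outright and is \emph{not} a minimal extending module for $\calT$. Thus your general claim ``the minimal-dimension $N\in\calY\setminus\calT$ lies in $\ME(\calT)$'' is false for arbitrary $\calY\supsetneq\calT$.

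Your overall architecture is sound and can be repaired. For the covering claim you never actually need~\ref{propertytwo} of $N$: your uniqueness sublemma, as you wrote it, uses only~\ref{propertyone} and~\ref{propertythree} of $N$ together with~\ref{propertytwo} of $M$, so it already applies once those are established, and yields $N\cong M$ directly. For surjectivity, first use~\ref{propertyone} alone (which suffices to make $\calS_N$ a torsion class) together with the cover hypothesis to get $\calS_N=\calY$; then prove~\ref{propertytwo} using this extra information. Given a non-split $0\to N\to X\to T\to 0$ with $T\in\calT$, your $\Hom(-,N)$ computation correctly gives $\Hom_\Lambda(X,N)=0$ (here you need that $N$ is a brick, which follows from~\ref{propertyone} and~\ref{propertythree}). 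If $X\notin\calT$, then the smallest torsion class containing $\calT$ and $X$, namely $\filt(\calT\cup\Gen X)$, lies strictly between $\calT$ and $\calY$, hence equals $\calY$ by the cover hypothesis. In particular $N$ has a filtration with factors in $\calT\cup\Gen X$; since $N\in\calT^\perp$ the bottom factor cannot lie in $\calT$, so it lies in $\Gen X$, producing a nonzero map $X\to N$ and contradicting $\Hom_\Lambda(X,N)=0$. (The paper itself does not prove this theorem; it is quoted from~\cite{BCZ}.)
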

\begin{remark}\label{factors}
\normalfont 
The reason that $\filt(\calT \cup M)$ is a torsion class, when $M$ is a minimal extending module for $\calT$, is because each proper factor of $M$ belongs to $\calT$.
See \cite[Lemma~2.3]{BCZ}.
\end{remark}

A module $M$ is a minimal extending module for some torsion class if and only if it is a brick  \cite[Theorem 1.0.3]{BCZ}. 
We often visualize the minimal extending modules as labeling each of the edges in the poset $\tors \Lambda$ as in Figure~\ref{fig_brick_labeling}.

The notion of minimal extending module for torsion classes can be dualized to minimal co-extending module for torsion-free classes. 
\begin{definition}\label{def:min-coext-module}\cite[Definition 2.3.1]{BCZ}
A module $M$ is a minimal co-extending module for a torsion-free class $\calF$ if it satisfies the following three properties:
\begin{enumerate}[label={(P\arabic*')}]
 \item Every proper submodule of \label{propertyoneprime}
   $M$ is in $\calF$;
  \item If $0\rightarrow F\rightarrow X
    \rightarrow M\rightarrow 0$ is a non-split exact sequence with
    $F\in \calF$, then $X\in \calF$; \label{propertytwoprime}
  \item $\Hom(M, \calF)=0$. \label{propertythreeprime}
  \end{enumerate}
  \end{definition}

We omit the dual statement of Theorem \ref{main: covers} for minimal co-extending modules. 
It can be obtained easily from the following correspondence between minimal extending and co-extending modules.
Note that $\calS\covers \calT$ in $\tors\Lambda$ if and only if $\calT^\perp \covers \calS^\perp$ in $\torf\Lambda$.

\begin{proposition}\label{prop:torsion-free-to-torsion-relation}\cite[Proposition 2.3.3]{BCZ}
Suppose that $\calT\in \tors \Lambda$ and $M$ is
an indecomposable $\Lambda$ module such that $\filt(\calT \cup \{M\})$
is a torsion class. Then $M$ is a minimal extending module for the torsion class $\calT$ if and only if it is a minimal co-extending module for the torsion-free class $\filt(\calT \cup\{M\})^\perp$.
\end{proposition}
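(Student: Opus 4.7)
The strategy is a direct verification of the three defining conditions in each direction. Write $\calS = \filt(\calT \cup \{M\})$ and $\calF = \calS^\perp$, so that $\calT \subseteq \calS$ and $\calF \subseteq \calT^\perp$. Two auxiliary facts drive the argument: (i) both minimal extending and minimal co-extending modules are bricks (either by the dual of \cite[Theorem~1.0.3]{BCZ}, or by a short self-contained argument from (P1$'$) and (P3$'$) with indecomposability: an endomorphism of $M$ that is not an isomorphism has proper image in $\calF$ by (P1$'$), producing a nonzero map from $M$ into $\calF$ that contradicts (P3$'$)); and (ii) in the non-degenerate setting $M \notin \calT$, the indecomposable $M$ lies in $\calT^\perp$, because the $(\calT,\calT^\perp)$-torsion radical $t_\calT M \subseteq M$ is either $0$ or $M$, and the latter would place $M \in \calT$.

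For the forward direction, assume (P1)--(P3). Property (P3$'$) is immediate from $M \in \calS$. For (P1$'$), given a proper submodule $N \subsetneq M$, I reduce $\Hom(\calS, N) = 0$ via a filtration of $\calS$ by $\calT \cup \{M\}$ to the two cases $\Hom(T, N) = 0$ for $T \in \calT$ (which is clear from $N \hookrightarrow M$ and (P3)) and $\Hom(M, N) = 0$ (any nonzero $M \to N$ composed with $N \hookrightarrow M$ is a nonzero endomorphism of the brick $M$, hence an isomorphism, forcing $N = M$). For (P2$'$), given a non-split $0 \to F \to X \to M \to 0$ with $F \in \calF$, I apply $\Hom(S,-)$ for $S \in \calS$ and reduce to the same two cases: $S \in \calT$ uses (P3), and $S = M$ uses brickness, since a nonzero $M \to X$ either splits the sequence or factors through $F$, violating $\Hom(M, F) = 0$.

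For the reverse direction, assume (P1$'$)--(P3$'$). Property (P3) follows immediately: any nonzero $T \to M$ with $T \in \calT$ has image either equal to $M$ (putting $M \in \calT$, excluded) or a proper nonzero submodule of $M$, which lies in $\calF$ by (P1$'$) and so admits no nonzero map from $T \in \calS$, a contradiction. For (P1), take a proper factor $M/N$ with $0 \neq N \subsetneq M$ and form $0 \to t_\calT(M/N) \to M/N \to Q \to 0$ with $Q \in \calS \cap \calT^\perp$. If $Q \neq 0$, any $\calT \cup \{M\}$-filtration of $Q$ must begin with a copy of $M$ (no nonzero submodule of $Q \in \calT^\perp$ can lie in $\calT$), giving an injection $M \hookrightarrow Q$; combined with the surjection $M \twoheadrightarrow M/N \twoheadrightarrow Q$, length forces $M \cong Q$ and hence $N = 0$, a contradiction. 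For (P2), starting from a non-split $0 \to M \to X \to T \to 0$ with $T \in \calT$, the quotient $X/(t_\calT X + M)$ is simultaneously a quotient of $T \in \calT$ and of $X/t_\calT X \in \calT^\perp$, hence zero, so $X = t_\calT X + M$; the induced surjection $M \twoheadrightarrow X/t_\calT X$ together with the injection from the filtration argument yields $M \cong X/t_\calT X$, forcing $t_\calT X \cap M = 0$ and thus a splitting of the original sequence.

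The main obstacle is the reverse direction, where the canonical $(\calT, \calT^\perp)$-decomposition must be paired with the filtration structure of $\calS$ just so as to exhibit, inside any potentially offending module $Q \in \calS \cap \calT^\perp$, both an injection $M \hookrightarrow Q$ and a surjection $M \twoheadrightarrow Q$; the brick property of $M$ then forces an isomorphism, and the length count produces the desired contradiction.
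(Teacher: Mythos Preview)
The paper does not prove this proposition; it is quoted from \cite{BCZ} without argument, so there is no ``paper's proof'' to compare against.  I will therefore evaluate your argument on its own.

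Your forward direction is clean and correct: the reductions via the filtration structure of $\calS=\filt(\calT\cup\{M\})$, together with the brick property, verify (P1$'$)--(P3$'$) without difficulty.

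The reverse direction, however, contains a genuine error in the verification of (P2).  You assert that $X/(t_\calT X+M)$, being simultaneously a quotient of $T\in\calT$ and of $X/t_\calT X\in\calT^\perp$, must vanish.  The first fact does place it in $\calT$, but the second does \emph{not} place it in $\calT^\perp$: torsion-free classes are closed under submodules, not quotients, and $\Hom(\calT^\perp,\calT)$ is not zero in general.  So the conclusion $X=t_\calT X+M$ is unjustified, and the rest of the argument (the length comparison forcing a splitting) collapses.

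This gap is not merely cosmetic.  Take $\Lambda=k(1\to 2)$, $\calT=\add(S_1)$, and $M=S_2$.  Then $\calS=\filt(\calT\cup\{M\})=\module\Lambda$ is a torsion class, so the hypothesis holds, and $\calF=\calS^\perp=0$.  The simple $S_2$ is trivially a minimal co-extending module for $\calF=0$ (all three conditions are vacuous or immediate).  But $S_2$ is \emph{not} a minimal extending module for $\add(S_1)$: the non-split sequence $0\to S_2\to P_1\to S_1\to 0$ has $S_1\in\calT$ while $P_1\notin\calT$, so (P2) fails.  In this example $t_\calT(P_1)=0$, so $X/(t_\calT X+M)=P_1/S_2=S_1\neq 0$, exhibiting precisely the failure of your step.

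Thus the reverse implication, as literally stated here, is false without an additional hypothesis (for instance that $\calS$ actually covers $\calT$, equivalently that $\calT^\perp=\filt(\calF\cup\{M\})$).  Your ``non-degenerate setting $M\notin\calT$'' aside does not rule out this example, since $S_2\notin\add(S_1)$.  The forward implication---which is all the present paper ever uses---stands, and your proof of it is fine.
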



\begin{remark}\label{min_coext_rmk}
\normalfont
Property~\ref{propertythreeprime} implies that if $M$ is a minimal coextending module for the torsion-free class $\calT^\perp$, then $M\in \calT$.
\end{remark}

\section{Lattice Background}\label{lattice sec}
In this section we fill in the necessary background material on lattices.
We recall basic definitions, examples, computations and facts about: complete semidistributive lattices, canonical join and meet representations, completely join-irreducible and meet-irreducible elements.

\subsection{Complete Semidistributive Lattices} 

Recall that a lattice (or lattice-poset) is a poset in which each pair of elements has a unique smallest upper bound $\Join$ (called the join) and a unique greatest lower bound $\Meet$ (called the meet). A lattice $L$ is \emph{complete} if the elements $\Join A$ and $\Meet A$ exist for any (possibly infinite) set $A\subseteq L$. In particular, $\Join L$ and $\Meet L$ exist.
Therefore a complete lattice $L$ has a unique greatest element $\1$ and a unique smallest element $\0$. An example of a lattice which is not complete is the Divisibility Poset (see Example \ref{Divisibility}).

\begin{definition}\label{def}
A lattice (lattice-poset) $L$ is called \emph{join-semidistributive} if it satisfies~\ref{jsd} for every triple $x,y,z$ in $L$.
Dually, a lattice is \emph{meet-semidistributive} if it satisfies~\ref{msd} for every $x,y,z$ in $L$.
We say that $L$ is \emph{semidistributive} if for each $x,y,z\in L$, both ~\ref{jsd}  and ~\ref{msd}  are true.
\begin{equation}\label{jsd}
\text{If $x\join y = x\join z$, then $x\join(y\meet z) = x\join y$}\tag{$SD_\join$}
\end{equation}
\begin{equation}\label{msd}
\text{If $x\meet y = x\meet z$, then $x\meet(y\join z) = x\meet y$}\tag{$SD_\meet$}
\end{equation}
\end{definition} 

\begin{example}\label{Divisibility}
\normalfont
Let $L$ be the Divisibility Poset, whose elements are the positive integers $\mathbb N$ ordered $x<y$ whenever $x|y$.
Observe that $x\join y = \lcm(x,y)$ and $x\meet y = \gcd(x,y)$.
However, $L$ is \emph{not} complete. Indeed there is no largest element~$\1$.
Observe that the join and meet operations distribute: $x\join( y\meet z) = (x\join y)\meet (x\join z)$ and $x\join ( y\meet z) = (x\meet y)\join (x\meet z)$ for all $x,y,z\in \mathbb{N}$.
Therefore  $L$ is semidistributive but not complete. 
\end{example}

\begin{figure}[h]
\centering
\scalebox{.7}{
\begin{tikzpicture}
\draw[fill] (2,0) circle [radius=.1];
\draw[fill] (.5,1) circle [radius=.1];
\draw[fill] (3.5,1) circle [radius=.1];
\draw[fill] (.5,3) circle [radius=.1];
\draw[fill] (2,4) circle [radius=.1];

\draw [black, line width=.65 mm] (2,0) to (.5,1);
\draw [black, line width=.65 mm] (2,0) to (3.5,1);
\draw [black, line width=.65 mm] (.5,1) to (.5,3);
\draw [black, line width=.65 mm] (2,4) to (.5,3);
\draw [black, line width=.65 mm] (2,4) to (3.5,1);

\node [below] at (2,-.25) {$0$};
\node [left] at (.25,1) {$\add(S_1)$};
\node [left] at (.25,3) {$\add(S_1, P_1)$};
\node [above] at (2,4.25) {$\module \Lambda$};
\node [right] at (3.75,1) {$\add(S_2)$};

\end{tikzpicture}
\qquad\qquad\qquad
\begin{tikzpicture}
\draw[fill] (2,0) circle [radius=.1];
\draw[fill] (-1,2) circle [radius=.1];
\draw[fill] (5,2) circle [radius=.1];
\draw[fill] (2,2) circle [radius=.1];
\draw[fill] (2,4) circle [radius=.1];

\draw [black, line width=.65 mm] (2,0) to (-1,2);
\draw [black, line width=.65 mm] (2,0) to (5,2);
\draw [black, line width=.65 mm] (2,0) to (2,2);
\draw [black, line width=.65 mm] (2,4) to (2,2);
\draw [black, line width=.65 mm] (2,4) to (5,2);
\draw [black, line width=.65 mm] (2,4) to (-1,2);

\node [below] at (2,-.25) {$0$};
\node [left] at (-1.25,2) {$\add(S_1)$};
\node [left] at (1.75,2) {$\add(P_1)$};
\node [above] at (2,4.25) {$\module \Lambda$};
\node [right] at (5.25,2) {$\add(S_2)$};
\end{tikzpicture}
}
\caption{The lattice of torsion classes (left), and the lattice of wide subcategories (right) for a type $A_2$ hereditary algebra $1\rightarrow 2$.}
\label{torsion_vs_wide}
\end{figure}
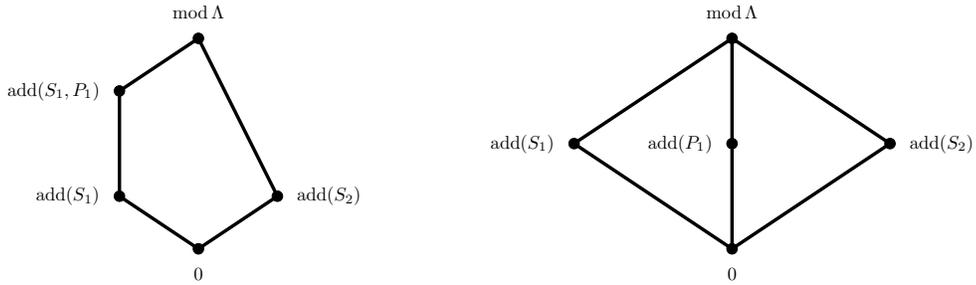

\begin{example}\label{finite_tors}
\normalfont
We check~\ref{jsd} \!in a small example for the lattice of torsion classes shown left in Figure~\ref{torsion_vs_wide}.
Take $x=\add(S_2)$, $y=\add(S_1)$ and $z=\add(S_1, P_1)$.
Observe that $x\join y =\add(S_2)\join\add(S_1)=\add(S_2)\join\add(S_1, P_1)= x \join z$.
Also, $y\meet z =\add(S_1)\meet \add(S_1, P_1) = \add(S_1) \cap \add(S_1, P_1) = \add(S_1).$
Therefore,
$x\join [y\meet z] = \add(S_2) \join [\add(S_1) \meet \add(S_1, P_1)] = \add(S_2) \join \add(S_1) = x\join y.$
One can check 
that this lattice is indeed semidistributive.

On the other hand, the lattice of wide subcategories shown to the right is not semidistributive.
Note that $\add(S_2)\join \add(S_1) = \add(S_2) \join \add(P_1)$.
However $\add(P_1)\meet \add(S_1) = 0$, so the lattice fails~\ref{jsd}.

\end{example}

\begin{example}
\normalfont
We display an example of the lattice of torsion classes for the path algebra of the Kronecker quiver with vertices $Q_0=\{1,2\}$ and arrows 
$Q_1=\{\alpha,\beta\}$:
$$\begin{tikzpicture}
\node (A) at (0,0) {$Q=(2\  \cdot$};
\node (B) at (2,0) {$ \cdot \ 1)$};
\draw[->] ([yshift=-.2em] A.east)--([yshift=-.2em]B.west) node[midway,below] {$\beta$};
  \draw[->] ([yshift=.2em] A.east)--([yshift=.2em] B.west) node[midway,above]
  {$\alpha$};
  \end{tikzpicture} $$
 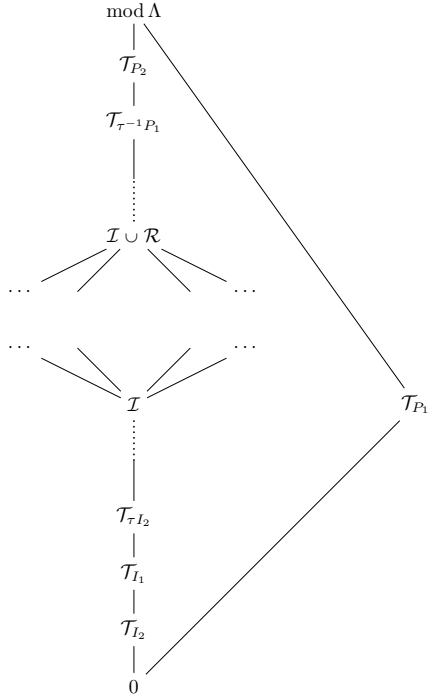
\begin{figure}[h]
  \scalebox{.75}{
  \begin{tikzpicture}
 \node(0) at (0,-5) {0};
 \node(1) at (0,-4) {$\calT_{I_2}$};
  \node(2) at (0,-3) {$\calT_{I_1}$};
   \node(3) at (0,-2) {$\calT_{\tau I_2}$};
   
   \node(r0) at (0,0) {$\calI$};
    
  \draw[-] (0)--(1)--(2)--(3)--(0,-1); 
  
  \draw[thick,dotted] (0,-1)--(r0);
  
  \node(r11) at(-2,1) {$\cdots$};
  \node (r12) at (2,1) {$\cdots$};
  
  \draw[-] (r0)--(-1,1);
    \draw[-] (r0)--(1,1);
      \draw[-] (r0)--(r11);
        \draw[-] (r0)--(r12);
     
     \node(r21) at(-2,2) {$\cdots$};
  \node (r22) at (2,2) {$\cdots$};

   \node (r1) at (0,3) {$\calI\cup\calR$};
     \draw[-] (r1)--(-1,2);
    \draw[-] (r1)--(1,2);
      \draw[-] (r1)--(r21);
        \draw[-] (r1)--(r22);
     
\draw[thick, dotted] (r1)--(0,4);

\node(4) at (0,5) {$\calT_{\tau^{-1}P_1}$};
\node(5) at (0,6) {$\calT_{P_2}$};
\node(6) at (0,7) {$\module \Lambda $};
    
 \draw(0,4)--(4)--(5)--(6);
 
 \node(p1) at (5,0){$\calT_{P_1}$};
 \draw(0)--(p1)--(6);

   \end{tikzpicture}}
   \caption{The lattice of torsion classes for the path algebra of Kronecker quiver. Here $\calI$ stands for preinjective modules and $\calR$ for regular modules.}
  \label{fig:kronecker}
  \end{figure}


The lattice of torsion classes for the Kronecker quiver from Figure~\ref{fig:kronecker} is an example of an infinite complete semidistributive lattice.
It is well-known that for any finite dimensional algebra $\Lambda$, $\tors \Lambda$ is a complete semidistributive lattice \cite{DIRRT, GM} with meet $\calT\meet \calS= \calT\cap \calS$ and join $\calT\join \calS= \filt(\calT\cup\calS)$.
\end{example}

\begin{remark}\label{antiso}
\normalfont
Suppose that $\calT$ and $\calS$ are torsion classes.
Observe that $\calT\subseteq \calS$ if and only if $\calS^\perp \subseteq \calT^\perp$.
We write $\torf \Lambda$ for the lattice of torsion-free classes ordered by containment.
It follows immediately that the lattices $\tors \Lambda$ and $\torf \Lambda$ are anti-isomorphic.
\end{remark}


\subsection{Canonical Join and Meet Representations: CJR and CMR}
Here we review the definition of join representations, canonical join representations, and we make the connection to semidistributivity.

A join representation of an element $x\in L$ is an expression $\Join A =x$.
We say that the join representation is \emph{irredundant} provided that $\Join A' < x$ for each proper subset of $A$.
We partially order the set of irredundant join representations of $x$ as follows:
Say that $\Join A$ is ``lower'' than $\Join B$ provided that for each $a\in A$ there is some element $b\in B$ such that $a\le b$.

\begin{definition}\label{cjr}
The \emph{canonical join representation of $x$}, denoted by $\CJR(x)$ is the unique lowest irredundant join representation among all irredundant join representations of $x$, when such a join representation exists.

By convention we set $\CJR(\0)=\Join \emptyset$ to be the empty join.
The canonical meet representation is defined dually, and denoted $\CMR(x)$.
\end{definition}

\begin{theorem}\cite[Theorem~2.24]{FJN} \label{thm_ext_cjr}
Suppose that $L$ is a finite lattice.
Then $L$ satisfies \ref{jsd} (join semi-distributive law) if and only if each element in $L$ has a canonical join representation.
Dually, $L$ satisfies \ref{msd} (meet semi-distributive law) if and only if each element in $L$ has a canonical meet representation.
\end{theorem}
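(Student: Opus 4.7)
The plan is to prove the biconditional for the join side; the meet version then follows by duality, since $L$ satisfies~\ref{msd} if and only if $L^{\mathrm{op}}$ satisfies~\ref{jsd}, and canonical meet representations in $L$ are precisely canonical join representations in $L^{\mathrm{op}}$.

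For the forward direction, assume $L$ is finite and satisfies~\ref{jsd}. Fix $x\in L$. The key construction is the following: for each lower cover $x'\covered x$, I claim there is a unique minimum element $j(x,x')$ of the set $\{j\in L : j\le x,\ j\not\le x'\}$. A minimum exists by finiteness once uniqueness is established, and uniqueness is where~\ref{jsd} enters. If $j_1,j_2$ were two distinct minima, then each $j_i\vee x'$ lies in the interval $[x',x]$ and strictly exceeds $x'$ (since $j_i\not\le x'$); as $x$ covers $x'$, this interval has only two elements, forcing $j_1\vee x'=j_2\vee x'=x$. Then~\ref{jsd} gives $(j_1\wedge j_2)\vee x'=x$, and $j_1\wedge j_2<j_i$ contradicts the minimality of $j_i$. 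Each $j(x,x')$ is automatically join-irreducible by the same semidistributive trick. After discarding the $j(x,x')$ that are redundant, I would then show that the resulting set joins to $x$ and constitutes the canonical join representation.

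For the reverse direction, assume every element of $L$ has a CJR and suppose $x\vee y=x\vee z=:w$. Write $\CJR(w)=\Join A$. For each canonical joinand $a\in A$, I would argue using the lowness of $\CJR(w)$ that $a\le x\vee(y\wedge z)$. Indeed, $a\le w=x\vee y$ provides an irredundant refinement of $w$ involving $(A\setminus\{a\})\cup\{x,y\}$; lowness forces every element of this refinement to lie above some element of $A$, and tracking this relative to both the $y$-side and the $z$-side of the hypothesis yields either $a\le x$, or $a\le y$ and $a\le z$ simultaneously, in which case $a\le y\wedge z$. Summing over $A$ gives $w=\Join A\le x\vee(y\wedge z)\le w$, so equality holds.

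The main obstacle is assembling the elements $j(x,x')$ in the forward direction into an irredundant join representation that is provably lower than every competing irredundant representation of $x$. The lowness comparison requires showing that any other irredundant $\Join B=x$ must have, for each surviving $j(x,x')$, an element $b\in B$ with $j(x,x')\le b$; one obtains this by observing that $\Join B=x\not\le x'$ forces some $b\in B$ with $b\not\le x'$, whereupon minimality of $j(x,x')$ in $\{j:j\le x,\ j\not\le x'\}$ yields $j(x,x')\le b$. This simultaneous coordination of the lower-cover-by-lower-cover data is the technical heart of the theorem and is what makes~\ref{jsd} exactly the right hypothesis.
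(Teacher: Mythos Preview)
The paper does not prove this theorem; it is cited from \cite{FJN} as background with no argument given. Your sketch is the standard proof and is essentially correct. Two small refinements: join-irreducibility of $j(x,x')$ follows from minimality alone rather than from~\ref{jsd} (if $j(x,x')=a\vee b$ with $a,b<j(x,x')$ then $a,b\le x'$ by minimality, so $j(x,x')\le x'$, a contradiction); and in the reverse direction there is no need to form the hybrid set $(A\setminus\{a\})\cup\{x,y\}$ --- it suffices to apply lowness of $\CJR(w)$ directly to an irredundant subset of $\{x,y\}$ and separately of $\{x,z\}$, yielding for each $a\in A$ either $a\le x$ or $a\le y$, and either $a\le x$ or $a\le z$, hence $a\le x\vee(y\wedge z)$.
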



\begin{example}\label{number_theory}
\normalfont
Let $L$ be the divisibility poset as in Example \ref{Divisibility}. 
The canonical join representation coincides with primary decomposition:
 \[x=\CJR(x)=\Join \{p^d: \text{$p$ is prime and $p^d$ is the largest power of $p$ dividing $x$}\}.\] 
\end{example}

\begin{example}\label{CJR}
\normalfont
Let us compute the canonical join representation of $\module \Lambda$ in the lattice of torsion classes from Example~\ref{finite_tors} and shown in Figure~\ref{fig_brick_labeling}.
Observe that both $\add(S_1,P_1) \join \add(S_2) = \module \Lambda = \add(S_1)\join \add(S_2)$ are join representations for $\module \Lambda$.
However the torsion class $\add(S_1)$ is ``lower'' than $\add(S_1, P_1)$.
Therefore $\add(S_1)\join \add(S_2)$ is the canonical join representation for $\module \Lambda$.

In the lattice of wide subcategories, as in Figure \ref{torsion_vs_wide}, the canonical join representation of $\module \Lambda$ does not exist because there is not a \emph{unique} lowest join representation. 
\end{example}

The next result connects minimal extending modules to canonical join representations in $\tors \Lambda$.
\begin{proposition}\label{cjr_and_brick_labeling}\cite[Theorem~1.08 and Lemma~3.2.4]{BCZ}
Let $\Lambda$ be a finite dimensional algebra and $\calT\in \tors \Lambda$.
Assume that $\calT$ has canonical join representation $$\CJR(\calT)=\Join_{\alpha\in A} \filt(\Gen(M_{\alpha})).$$  
Then for each $\alpha\in A$, there is a torsion class $\calY_\alpha \covered \calT$ such that $M_\alpha$ is the minimal extending module for this cover relation.
\end{proposition}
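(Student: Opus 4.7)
The plan is to construct, for each $\alpha \in A$, a lower cover $\calY_\alpha$ of $\calT$ and then verify via Theorem~\ref{main: covers} that $M_\alpha$ is the minimal extending module labelling this cover. The natural candidate is
\[
\calY_\alpha := \bigvee\bigl\{\calS \in \tors \Lambda : \calS \subseteq \calT \text{ and } M_\alpha \notin \calS\bigr\}.
\]
The collection is nonempty: irredundancy of the canonical join representation gives $\Join_{\beta \neq \alpha} \calT_{M_\beta} \subsetneq \calT$, and this join cannot contain $M_\alpha$ (else it would contain $\calT_{M_\alpha}$ and equal $\calT$). That $\calY_\alpha$ itself avoids $M_\alpha$ is more delicate and uses complete semidistributivity of $\tors \Lambda$ from \cite{DIRRT, GM} together with the fact that $\calT_{M_\alpha}$ is completely join-irreducible with unique lower cover $(\calT_{M_\alpha})_*$: for every $\calS$ in the collection, $\calS \cap \calT_{M_\alpha}$ is strictly below $\calT_{M_\alpha}$ and therefore is contained in $(\calT_{M_\alpha})_*$, and an infinitary form of \ref{msd} transports this to $\calY_\alpha \cap \calT_{M_\alpha} \subseteq (\calT_{M_\alpha})_* \subsetneq \calT_{M_\alpha}$, so $M_\alpha \notin \calY_\alpha$.

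I would then show $\calY_\alpha \covered \calT$ by a squeezing argument. Suppose $\calY_\alpha \subsetneq \calS \subseteq \calT$. Maximality gives $M_\alpha \in \calS$, so $\calT_{M_\alpha} \subseteq \calS$. On the other hand, for each $\beta \neq \alpha$, irredundancy forces $M_\alpha \notin \calT_{M_\beta}$ (else $\calT_{M_\alpha} \subseteq \calT_{M_\beta}$, violating irredundancy), so $\calT_{M_\beta} \subseteq \calY_\alpha \subseteq \calS$. Therefore $\calS \supseteq \Join_\beta \calT_{M_\beta} = \calT$, giving $\calS = \calT$.

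Finally, to identify $M_\alpha$ as the brick label, Theorem~\ref{main: covers} supplies a unique $N \in \ME(\calY_\alpha)$ with $\filt(\calY_\alpha \cup \{N\}) = \calT$, and it suffices to verify that $M_\alpha$ satisfies Definition~\ref{def:min-ext-module} relative to $\calY_\alpha$. Since $(\calT_{M_\alpha})_* \subseteq \calY_\alpha$ (being a torsion class strictly below $\calT$ not containing $M_\alpha$), \ref{propertyone} transfers from $M_\alpha \in \ME((\calT_{M_\alpha})_*)$. For \ref{propertytwo}, an extension $0 \to M_\alpha \to X \to T \to 0$ with $T \in \calY_\alpha$ lies in $\calT$; if $X \notin \calY_\alpha$ the cover relation forces $\filt(\calY_\alpha \cup \{X\}) = \calT$, so $M_\alpha$ would admit a filtration by pieces from $\calY_\alpha \cup \{X\}$, but $\dim X > \dim M_\alpha$ precludes $X$ from appearing as a subquotient of $M_\alpha$, so all pieces lie in $\calY_\alpha$, giving $M_\alpha \in \calY_\alpha$, a contradiction. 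For \ref{propertythree}, any nonzero $f\colon X \to M_\alpha$ with $X \in \calY_\alpha$ yields $I := \image(f) \in \calY_\alpha$; if $I = M_\alpha$ we contradict $M_\alpha \notin \calY_\alpha$, while if $I \subsetneq M_\alpha$ then $M_\alpha/I$ is a proper factor and lies in $\calY_\alpha$ by \ref{propertyone}, and extension-closure of $\calY_\alpha$ applied to $0 \to I \to M_\alpha \to M_\alpha/I \to 0$ again gives $M_\alpha \in \calY_\alpha$, a contradiction.

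The main obstacle throughout is the very first step: the infinitary meet-semidistributivity needed to show $M_\alpha \notin \calY_\alpha$ is genuinely stronger than its finite form and depends essentially on the completely join-irreducible structure of $\calT_{M_\alpha}$. Once this is in hand, the remainder of the argument reduces to verifying the axioms of minimal extending module, which propagate cleanly from the completely join-irreducible cover $(\calT_{M_\alpha})_* \covered \calT_{M_\alpha}$ up to the cover $\calY_\alpha \covered \calT$.
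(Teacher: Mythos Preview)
The paper does not prove this proposition; it is quoted from \cite{BCZ}, so there is no in-paper argument to compare against. Your outline is in the right spirit but has two real gaps. First, complete meet-semidistributivity asserts that when the meets $a\meet b_i$ are all \emph{equal} to some fixed $c$, then $a\meet\Join_i b_i=c$; it does not assert that a common upper bound on the $a\meet b_i$ survives passage to the join. In your step~3 the meets $\calS\cap\calT_{M_\alpha}$ range over possibly many different torsion classes below $(\calT_{M_\alpha})_*$, so the hypothesis of infinitary \ref{msd} is not met. (Replacing each $\calS$ by $\calS\vee(\calT_{M_\alpha})_*$ would equalize the meets, but then you must first check $M_\alpha\notin\calS\vee(\calT_{M_\alpha})_*$, which is no easier than the original claim.) Second, in your verification of \ref{propertytwo} you treat $\filt(\calY_\alpha\cup\{X\})$ as a torsion class, but closure under quotients can fail: a proper quotient $X/N$ with $N\cap M_\alpha=0$ may split off $M_\alpha$ as a direct summand and hence lie outside $\calY_\alpha$. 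Once you pass to the genuine torsion closure $\filt(\calY_\alpha\cup\Gen X)$, small quotients of $X$ are permitted as filtration factors of $M_\alpha$ and your dimension bound no longer excludes them.

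Both problems disappear if you instead take $\calY_\alpha:=\calT\cap\leftidx{^\perp}{M_\alpha}$ from the start. Hom-orthogonality of the canonical joinands (Theorem~\ref{torsion_cjr}) gives $\calT_{M_\beta}\subseteq\leftidx{^\perp}{M_\alpha}$ for $\beta\ne\alpha$, so $\Join_{\beta\ne\alpha}\calT_{M_\beta}\subseteq\calY_\alpha\subsetneq\calT$. For the cover relation, any $\calS$ with $\calY_\alpha\subsetneq\calS\subseteq\calT$ contains some $X$ with a nonzero map to $M_\alpha$; its image lies in $\calS$, the proper cofactor lies in $(\calT_{M_\alpha})_*\subseteq\calY_\alpha\subseteq\calS$, and extension-closure gives $M_\alpha\in\calS$, hence $\calS=\calT$. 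With this definition \ref{propertythree} is immediate, \ref{propertyone} follows from $(\calT_{M_\alpha})_*\subseteq\calY_\alpha$, and for \ref{propertytwo} any nonzero $f\colon X\to M_\alpha$ restricts on the submodule $M_\alpha$ either to an isomorphism (splitting the sequence, contradiction) or to zero (so $f$ factors through $T\in\leftidx{^\perp}{M_\alpha}$ and vanishes).
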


\begin{remark}
\normalfont
When $L$ is an infinite semidistributive lattice, the canonical join representation or the canonical meet representation may not exist, even when $L$ is additionally a complete lattice.
For example, consider the lattice depicted in Figure~\ref{non_ex}.
Each dashed line in the figure represents an infinite chain of elements, none of which cover $\0$ nor are covered by $\1$.
Observe that $L$ is both complete and semidistributive, however the canonical join representation of the top element of $L$ does not exist and the canonical meet representation for the bottom element does not exist.
\end{remark}

\begin{figure}[h]
 \centering
 \scalebox{.6}{
\begin{tikzpicture}

\draw[thick, dashed] (-0.25,0) to [out=135,in=225] (-0.25,4);
\draw[thick, dashed] (0.25,0) to [out=45,in=-45] (0.25,4);
\node at (0,4.25) {$\1$};
\node at (0,-.25) {$\0$};

\end{tikzpicture}}
 \caption{An infinite complete semidistributive lattice.
 Each dashed line represents an infinite chain in~$L$.}
        \label{non_ex}
\end{figure}
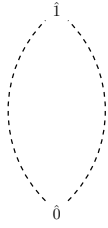

\subsection{Completely Join and Meet-Irreducible Elements CJI and  CMI}

\begin{definition}\label{join-irreducible} \normalfont{
An element $j\in L$ is join-irreducible if whenever $j=\Join A$ and $A$ is {\bf finite} then $j\in A$.
We say that $j\in L$ is \emph{completely join-irreducible} if whenever $j=\Join A$ then $j\in A$.
Completely meet-irreducible elements are defined dually.
By convention, we set the empty join to be $\0$, and hence $\0$ is not join-irreducible or completely join-irreducible.
Similarly, $\1$ is not meet-irreducible or completely meet-irreducible.}
\end{definition}

\begin{definition}\label{def:cover}
\normalfont
An element $y$ \emph{covers} $x$ if $y>x$ and there is no $z$ such that $y>z>x$.
In this case we also say that $x$ is \emph{covered by} $y$, and we use the notation $y\covers x$.
The pair $(x,y)$ is called a \emph{cover relation}.
\end{definition}

\begin{remark}\label{j_*}
\normalfont
Notice that an element $j$ is completely join-irreducible if it covers precisely one element which we write as $j_*$. 
\end{remark}

We write $\CJI(L)$ (respectively $\CMI(L)$) for the set of completely join-irreducible elements of $L$ (completely meet-irreducible elements of $L$).


\begin{example}\label{number_theory}
\normalfont
Let $L$ be the divisibility poset as in Example \ref{Divisibility}. 
An element $x$ is completely join-irreducible if and only if $x=p^n$ where $p$ is prime and $n$ is a positive integer.
Note that if $p$ is prime then $px\covers x$.
Therefore, none of the elements of $L$ are completely meet-irreducible (because each positive integer is covered by infinitely many elements).
\end{example}

\begin{example}\label{A2_ex}
\normalfont
Consider the lattice of torsion classes for hereditary algebra of type $A_2$ shown in Figure~\ref{fig_brick_labeling}.
Observe that $\add(S_1), \add(S_2)$ and $\add(S_1, P_1)$ are all join-irreducible and completely join-irreducible.
\end{example}

\begin{example}\label{ji_kronecker}
\normalfont
Consider the lattice of torsion classes for the Kronecker quiver shown in Figure~\ref{fig:kronecker}.
In the figure, $\calI$ denotes the subcategory of all preinjective modules.
Observe that $\calI$ is join-irreducible but it is not completely join-irreducible.
\end{example}

\begin{proposition} \label{CJR-join-irreducibles}
\normalfont (a) Let $\CJR(x)=\Join A$ be the canonical join representation of $x$. Then every $a\in A$ is join-irreducible. \\
(b) Let $L$ be a finite lattice. Let $\CJR(x)=\Join A$ be the canonical join representation of $x$. Then every $a\in A$ is completely join-irreducible.
\end{proposition}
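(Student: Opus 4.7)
The plan is to prove (a) by contradiction and then derive (b) from (a) via a simple cardinality observation.

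For part (a), suppose toward a contradiction that some $a\in A$ is \emph{not} join-irreducible. Then there is a finite set $\{b_1,\dots,b_k\}$ with $a=b_1\join\cdots\join b_k$ and $b_i<a$ for every $i$. Substituting this expression into $\CJR(x)=\Join A$ gives
\[
x \;=\; \Join\bigl((A\setminus\{a\})\cup\{b_1,\dots,b_k\}\bigr).
\]
Now I would extract an irredundant subset $A''$ of $(A\setminus\{a\})\cup\{b_1,\dots,b_k\}$ whose join is still $x$; this is possible because the join representation above involves only finitely many ``new'' elements, and one can discard joinands one at a time until irredundancy is achieved. The key observations to record are: (i) since $a\ne b_i$ for all $i$ (as $b_i<a$), $a$ does \emph{not} appear in the substituted set, hence $a\notin A''$, so $A''\ne A$; and (ii) every element of $A''$ is either an element of $A\setminus\{a\}$ or is some $b_i$ with $b_i<a\in A$. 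Thus $A''$ is an irredundant join representation of $x$ that is lower than $A$ in the preorder on irredundant join representations (each element of $A''$ is dominated by an element of $A$), yet $A''\ne A$. This contradicts the uniqueness of $A=\CJR(x)$ as the lowest irredundant join representation.

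For part (b), I would simply note that in a finite lattice $L$, every subset of $L$ is finite. Hence if $a\in L$ is join-irreducible and $a=\Join B$ for some (a priori arbitrary) $B\subseteq L$, then $B$ is automatically finite, so the definition of join-irreducibility applies and yields $a\in B$. That is, in a finite lattice, join-irreducibility and complete join-irreducibility coincide, and (b) follows from (a) at once.

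The argument is largely straightforward; the only mild subtlety is making the substitution-and-prune step carefully so that the resulting set $A''$ is genuinely irredundant and genuinely strictly lower than $A$. The first is ensured by iteratively deleting redundant joinands, and the second follows from the fact that $a$ itself has been removed and replaced by elements strictly below it. I do not anticipate a serious obstacle; the main thing to double-check is that the ``lower than'' preorder is well enough behaved that $A''\ne A$ combined with $A''$ lower than $A$ actually contradicts minimality (it does, because the canonical join representation is assumed unique among irredundant representations that are least in this preorder).
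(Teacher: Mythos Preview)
Your approach is essentially the same as the paper's: assume some $a\in A$ fails join-irreducibility, substitute a finite nontrivial join decomposition of $a$, then prune to an irredundant representation strictly lower than $A$. The one place where the paper is more careful is the pruning step: you assert that irredundancy can be achieved ``because the join representation above involves only finitely many new elements,'' but when $A$ is infinite you must justify that no element of $A\setminus\{a\}$ ever becomes redundant in $(A\setminus\{a\})\cup\{b_1,\dots,b_k\}$, so that all deletions occur among the finitely many $b_i$. The paper does exactly this, computing $\Join\bigl((A\setminus\{a,b\})\cup\{b_1,\dots,b_k\}\bigr)=\Join(A\setminus\{b\})<x$ for any $b\in A\setminus\{a\}$ via irredundancy of $A$; you should add this line. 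Your part (b) is correct and matches the paper's (implicit) reasoning.
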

\begin{proof} 
Suppose there is some ${a\in A}$ that is not join-irreducible.
Then $a=\Join A'$, where $A'=\{a'_1, a'_2, \ldots, a'_n\}$ is finite.
Thus $x= \Join (A\setminus \{a\}) \cup A'$ is a lower join representation.
We now create an irredundant lower join representation of $x$.

First, observe that for any $b\in A\setminus \{a\}$ we have $\Join (A\setminus \{a,b\}) \cup A'<x$ as shown below:
\begin{align*}
\Join (A\setminus \{a,b\}) \cup A' &= \Join (A\setminus \{a,b\}) \Join A'\\
&= \Join (A\setminus \{a,b\}) \join a\\
&=\Join A\setminus \{b\} <x 
\end{align*}
The last equality and inequality follow from the fact that $\Join A$ is irredundant.
Therefore, the only way $\Join (A\setminus \{a\}) \cup A' $ is not irredundant is if there is some other element $a'\in A'$ such that $\Join (A\setminus \{a\}) \cup (A'\setminus \{a'\}) = x$.
Remove each such element from $A'$ until $x=\Join A \setminus \{a\})\cup (A'\setminus \{a'_{i_1}, a'_{i_2}, \ldots, a'_{i_k}\})$ is irredundant.
This is possible because $A'$ is finite.
Thus we obtain an irredundant join representation of $x$ that is lower than the one we started with.
\end{proof}

\begin{remark}\ 
\normalfont
\begin{enumerate}
\item However in the canonical join representation $\CJR(x)=\Join A$,  $a\in A$ may not be completely join-irreducible.
For example, let lattice $L$ be defined by $\hat 0=a_0\covered a_1\covered \dots  \hat 1$, where $\1$ is greater than $a_i$ for all $i$, but $\1$ does not cover any elements. Then $\CJR(\hat 1)=\hat 1$.
However $\hat 1$ is not completely join irreducible since $\hat 1= \Join \{a_i | i\geq 0\}$ and $\hat 1\neq a_i$ for all $i$.
\item Notice, in general, if $j$ is completely join-irreducible then $\CJR(j) = \{j\}$.
\end{enumerate}
\end{remark}
We use the following theorem throughout the paper.

\begin{theorem}\label{schur modules to join-irreducibles} \cite[Theorem 3.1.1]{BCZ}
Let $\Lambda$ be a finite dimensional algebra. Then:
\begin{enumerate}
\item The map $\zeta:[M]\mapsto \calT_M=FiltGen(M)$ defines a bijection from the
set of isoclasses of bricks over $\Lambda$ to the set of completely join-irreducible torsion classes $\CJI(\tors \Lambda)$.
\item  The map $\zeta':[M]\mapsto FiltCogen(M)$ defines a bijection from the
set of isoclasses of bricks over $\Lambda$ to the set of completely meet-irreducible torsion-free classes $\CMI(\torf \Lambda)$.
\end{enumerate}
\end{theorem}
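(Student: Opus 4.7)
The plan is to prove part (1); part (2) then follows by applying the same argument after the anti-isomorphism of Remark \ref{antiso}, replacing $\Gen M$ throughout by $\Cogen M$ and $\Hom(-,M)$ by $\Hom(M,-)$. I would organize the proof of (1) into well-definedness, showing $\calT_M$ is CJI, injectivity, and surjectivity.

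Well-definedness is immediate since $\calT_M = \filt(\Gen M)$ is the smallest torsion class containing the brick $M$. For surjectivity, given $\calT \in \CJI(\tors\Lambda)$, I would take its unique lower cover $\calT_*\covered\calT$ (Remark \ref{j_*}) and apply Theorem \ref{main: covers} to $\calT_*$: this cover relation is labeled by a unique minimal extending module $M$ for $\calT_*$, which is a brick by \cite[Theorem 1.0.3]{BCZ}, and $\calT = \filt(\calT_* \cup \{M\})$. Since $M\in\calT$, we get $\calT_M \subseteq \calT$; the reverse inclusion $\calT\subseteq \calT_M$ reduces to $\calT_* \subseteq \calT_M$, which will drop out of the main step below.

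To prove $\calT_M$ is completely join-irreducible, by Remark \ref{j_*} I must exhibit a unique torsion class covered by $\calT_M$. My candidate is
\[
(\calT_M)_* := \calT_M \cap {}^\perp M = \{X \in \calT_M : \Hom(X,M) = 0\}.
\]
It is routine that $(\calT_M)_*$ is a torsion class (closure under quotients inherits from $\calT_M$; closure under extensions uses left-exactness of $\Hom(-,M)$) and that $M \in \calT_M \setminus (\calT_M)_*$, since $\End(M)$ is a division ring and so $\Hom(M,M)\neq 0$. The main technical step is to show that every torsion class $\calS \subsetneq \calT_M$ is contained in $(\calT_M)_*$. Minimality of $\calT_M$ gives $M\notin\calS$. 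If $Y\in\calS$ admits a nonzero $f:Y\to M$, put $N:=\image f$; then $N\in\calS$ (quotient of $Y$) and $N\subseteq M$, so if I can force $N = M$ then $M\in\calS$, a contradiction.

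The hard part is this last claim that $N=M$, and I would argue it by combining the filtration structure of $\calT_M$ with the brick property. Since $N\in\calT_M$ is nonzero, it has a filtration $0 = N_0 \subsetneq N_1 \subsetneq \cdots \subsetneq N_r = N$ with each $N_i/N_{i-1} \in \Gen M$. In particular $N_1 \subseteq N \subseteq M$ is itself in $\Gen M$, so there is a surjection $\phi: M^k \twoheadrightarrow N_1$. Composing with $N_1 \hookrightarrow M$ yields a map $\psi: M^k \to M$, which corresponds to an element $(\psi_1,\ldots,\psi_k) \in \End(M)^k$. Since $\End(M)$ is a division ring, each $\psi_i$ is either $0$ or an isomorphism, so the image of $\psi$ equals $\sum_{\psi_i\neq 0}\psi_i(M)$, which is either $0$ or all of $M$. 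But this image is $N_1$, and $N_1 \neq 0$ by the strictness of the filtration, forcing $N_1 = M$ and hence $N = M$. Finally, injectivity of $\zeta$ is automatic: the brick $M$ is recovered from $\calT_M$ as the unique minimal extending module labeling the cover $\calT_M \covers (\calT_M)_*$ via Theorem \ref{main: covers}.
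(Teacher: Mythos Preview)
The paper does not supply its own proof of this theorem; it is quoted verbatim from \cite[Theorem~3.1.1]{BCZ} and used as a black box. So there is nothing in the present paper to compare against. Your argument is essentially correct and is close in spirit to the original proof in \cite{BCZ}: identify $(\calT_M)_* = \calT_M \cap \leftidx{^\perp}{M}$, show every proper torsion subclass of $\calT_M$ lands there via the brick condition, and read off the brick from the unique lower cover using Theorem~\ref{main: covers}.

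One point deserves tightening. In your surjectivity paragraph you reduce $\calT \subseteq \calT_M$ to $\calT_* \subseteq \calT_M$ and say this ``will drop out of the main step below.'' It does not: the main step controls proper torsion subclasses of $\calT_M$, whereas here you need information about proper torsion subclasses of $\calT$. The fix is shorter than the detour you propose. You already have $\calT_M \subseteq \calT$ and $M \in \calT_M \setminus \calT_*$ (the latter by Property~\ref{propertythree} for the minimal extending module $M$). Now use that $\calT$ is completely join-irreducible in the complete lattice $\tors\Lambda$: the join of all elements strictly below $\calT$ is still strictly below $\calT$ (else $\calT$ would be a join of strictly smaller elements), hence equals $\calT_*$; so every proper torsion subclass of $\calT$ is contained in $\calT_*$. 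Since $\calT_M \not\subseteq \calT_*$, we conclude $\calT_M = \calT$ directly, without ever needing $\calT_* \subseteq \calT_M$.

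A minor addendum: for injectivity you invoke Theorem~\ref{main: covers} to recover $M$ from $\calT_M \covers (\calT_M)_*$. This is fine, but note it requires checking that $M$ is a minimal extending module for $(\calT_M)_*$. Properties~\ref{propertyone} and~\ref{propertytwo} are not automatic from your construction; they follow from the same ``nonzero submodule of $M$ in $\calT_M$ equals $M$'' argument you gave in the main step (for \ref{propertyone}) and from the brick condition on $\End(M)$ (for \ref{propertytwo}). You may want to say one sentence acknowledging this.
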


\begin{remark}\label{useful_rmk}
\normalfont
Let $M$ be a brick and let $(\calT_M)_*$ be the unique torsion class covered by $\calT_M$. It follows from  \cite[Proposition~2.3.4]{BCZ} that:
\begin{enumerate}
\item $M$ is a minimal extending module for the cover relation $(\calT_M)_*\covered \calT_M$. 
\item Each proper factor of $M$ is in $(\calT_M)_*$ by Property~\ref{propertyone}, and $M\in ((\calT_M)_*)^\perp$ by Property~\ref{propertythree}.
\end{enumerate}
\end{remark}

\section{Kappa Map}\label{kappa sec}

\subsection{Definition of Kappa Map}
Before we prove our first main result, we recall the fundamental object of the paper, the $\kappa$-map.
\begin{definition}\label{kappa-map with*}
Let $j$ be a completely join-irreducible element of a lattice $L$, and let $j_*$ be the unique element covered by $j$.
Define $\kappa(j)$ to be:
$$\kappa(j): = \text{unique max}\{x\in L: j_*\le x\text{ and } j\not\le x\},
\text{ when such an element exists.}$$
\noindent Dually, let $m$ be a completely meet-irreducible element and let $m_*$ be the unique element covering $m$. Define $\kappa_*(m)$ to be:
$$\kappa_*(m):=\text{unique min}\{x\in L: x\le m_*\text{ and } x\not \le m\}, \text{when such an element exists.}$$
\end{definition}


\begin{example}\label{ex1}
\normalfont
We evaluate the $\kappa$ map for the lattice of torsion classes displaced left in Figure~\ref{torsion_vs_wide}.
First consider $\kappa(\add(S_2))$.
Note that $\add(S_2)$ is completely join-irreducible because it covers precisely one element $0$. 
Since $\add(S_1, P_1)$ is largest torsion class such that $0\le \add(S_1, P_1)$ and $\add(S_2)\not \le \add(S_1, P_1)$, it follows that $\kappa(\add(S_2))= \add(S_1, P_1)$.
Similarly $\kappa(\add(S_1))= \add(S_2)$ and $\kappa(\add(S_1,P_1)) = \add(S_1)$.

On the other hand, the map $\kappa$ is not well-defined on the lattice of wide subcategories for $\Lambda$.
If we try to compute $\kappa(\add(S_1))$ we see that there are two incomparable maximal elements which contain the wide subcategory $0$, and do not contain $\add(S_1)$.
When $L$ is finite, then it is semidistributive if and only if $\kappa$ is a bijection from $\CJI(L)$ to $\CMI(L)$ with inverse $\kappa_*$ \cite[Corollary~2.55]{FJN}.
Recall, the lattice of wide subcategories of $\Lambda$ is not semidistributive.
\end{example}


\subsection{The Kappa Map and Minimal Extending Modules}
In this section, we consider arbitrary finite dimensional algebras and prove Theorem~\ref{main1},  i.e. the combinatorially defined $\kappa$-map (Definition~\ref{kappa-map with*}), when restricted to completely join-irreducible torsion classes, has a very nice representation theoretic interpretation as the left Hom-orthogonal operation.

We begin with a useful lemma relating minimal extending modules and naturally associated torsion classes obtained by taking Hom-orthogonal subcategories.
Recall that $\ME(\calT)$ is the set of isoclasses of minimal extending modules of $\calT$.
 \begin{lemma}\label{left perp}
Let $\Lambda$ be a finite dimensional algebra and $M$ be a brick. Then
\begin{enumerate}
\item $\leftidx{^\perp}{M}$ is a torsion class.
\item  $(\leftidx{^\perp}M)^\perp=\filt\Cogen M$.
\item $\ME(\leftidx{^\perp}M)=\{M\}$.
\end{enumerate}
\end{lemma}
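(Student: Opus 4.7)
The plan is to handle the three claims in order, making essential use at each step of the fact that, since $M$ is a brick, $\End_\Lambda(M)$ is a division ring (and in particular $M$ is indecomposable). For (1), I would simply observe that $\leftidx{^\perp}{M}$ is closed under quotients (a map from a quotient $X/Y$ to $M$ lifts to $X\to M$, hence is zero whenever $X\in \leftidx{^\perp}{M}$) and under extensions (apply $\Hom(-,M)$ to a short exact sequence $0\to A\to B\to C\to 0$ with $A,C\in \leftidx{^\perp}{M}$ to read off $\Hom(B,M)=0$ from the long exact sequence).

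For (2) I would prove the two containments separately. The forward inclusion $\filt\Cogen M\subseteq (\leftidx{^\perp}{M})^\perp$ is routine: any $N\in \Cogen M$ embeds in some $M^k$, so $\Hom(X,N)\hookrightarrow \Hom(X,M^k)=0$ for $X\in \leftidx{^\perp}{M}$, and induction on the length of a filtration extends this to all of $\filt\Cogen M$. For the reverse, I would first check that $\filt\Cogen M$ is itself a torsion-free class: closure under extensions is built in, and closure under submodules follows by intersecting a given submodule with a $\Cogen M$-filtration, since the resulting subquotients embed into objects of $\Cogen M$ and $\Cogen M$ is closed under submodules. Since $M\in \filt\Cogen M$, we get $\leftidx{^\perp}{(\filt\Cogen M)}\subseteq \leftidx{^\perp}{M}$, and combined with the forward containment this yields equality of the two left perps; taking right perps gives $(\leftidx{^\perp}{M})^\perp=\filt\Cogen M$.

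For (3) I would first verify that $M$ itself is a minimal extending module for $\leftidx{^\perp}{M}$. Property~(P3) is immediate. For (P1), let $M\twoheadrightarrow N$ be a proper surjection and suppose $f:N\to M$ were nonzero; then the composite $M\twoheadrightarrow N\xrightarrow{f} M$ is a nonzero endomorphism of the brick $M$, hence invertible, forcing $M\twoheadrightarrow N$ to be injective and contradicting properness. For (P2), apply $\Hom(-,M)$ to a non-split sequence $0\to M\to X\to T\to 0$ with $T\in \leftidx{^\perp}{M}$: the long exact sequence yields an inclusion $\Hom(X,M)\hookrightarrow \End_\Lambda(M)\cong k$, and any nonzero element would lift $\id_M$ to a map $X\to M$, splitting the sequence and contradicting the hypothesis.

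For uniqueness, let $N$ be any brick which is a minimal extending module for $\leftidx{^\perp}{M}$. Then $N\notin \leftidx{^\perp}{M}$, since otherwise (P3) for $N$ would give $\Hom(N,N)=0$, contradicting that $N$ is a brick; so there is a nonzero $f:N\to M$. If $\ker f\neq 0$, then $\image f$ is a proper factor of $N$, hence lies in $\leftidx{^\perp}{M}$ by (P1) for $N$---but the nonzero inclusion $\image f\hookrightarrow M$ contradicts this. So $f$ is injective. If $M/f(N)\neq 0$, it is a proper factor of $M$, hence in $\leftidx{^\perp}{M}$ by the verification of (P1) for $M$ just given, and (P2) for $N$ forces the sequence $0\to N\xrightarrow{f} M\to M/f(N)\to 0$ to split---contradicting indecomposability of the brick $M$. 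Hence $f$ is an isomorphism and $N\cong M$. The main obstacle is this last paragraph, where one must combine (P1) applied to both $M$ and $N$ together with (P2) for $N$ and the indecomposability of $M$ in just the right way to force the nonzero map $N\to M$ to be an isomorphism.
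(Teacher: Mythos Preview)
Your arguments for (1) and (2) are correct and essentially identical to the paper's: both rely on the observation that $\filt\Cogen M$ is a torsion-free class, so that $(\leftidx{^\perp}{(\filt\Cogen M)})^\perp=\filt\Cogen M$, together with the trivial containment coming from $M\in\filt\Cogen M$.

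For (3) you take a genuinely different route. The paper does not verify (P1)--(P3) by hand nor argue uniqueness directly; instead it invokes two earlier structural results: by \cite[Theorem~3.1.1]{BCZ} the map $[M]\mapsto\filt\Cogen M$ is a bijection from bricks to completely join-irreducible torsion-free classes, so $(\leftidx{^\perp}M)^\perp=\filt\Cogen M$ covers a unique element of $\torf\Lambda$, and then Proposition~\ref{prop:torsion-free-to-torsion-relation} identifies the unique minimal extending module of $\leftidx{^\perp}M$ with the unique minimal co-extending module of $\filt\Cogen M$, namely $M$. Your approach is more elementary and self-contained---it needs nothing beyond the definition of brick and of minimal extending module---whereas the paper's approach is shorter because it cashes in machinery already developed. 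Both are valid; yours would be preferable in an exposition that had not yet established those bijections.

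One small correction: you write $\End_\Lambda(M)\cong k$, but the paper only assumes $\Lambda$ is finite dimensional over a field, so $\End_\Lambda(M)$ is merely a division ring. Your (P2) argument still works: a nonzero $g\colon X\to M$ has $g\circ i$ a nonzero element of $\End_\Lambda(M)$, hence a unit, and precomposing $g$ with its inverse gives a retraction of $i$. Similarly, in your uniqueness paragraph the phrase ``(P2) for $N$ forces the sequence to split'' tacitly uses the contrapositive together with $M\notin\leftidx{^\perp}M$; this is correct but worth making explicit.
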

\begin{proof}
(1) It is clear that $\leftidx{^\perp}M$ is closed under quotients and extensions. 
Therefore it is a torsion class.\\
(2) Since $M\in(\leftidx{^\perp}M)^\perp$ and $(\leftidx{^\perp}M)^\perp$ is a torsion free class, it follows that $\filt\Cogen M\subseteq (\leftidx{^\perp}M)^\perp$. On the other hand, since $M\in\filt\Cogen M$, it follows that $\leftidx{^\perp}M\supseteq \leftidx{^\perp}\filt\Cogen M$ and hence $(\leftidx{^\perp}M)^\perp\subseteq (\leftidx{^\perp}\filt\Cogen M)^\perp=\filt\Cogen M$, where the last equality holds because $\filt\Cogen M$ is a torsion free class. \\
(3) By the Theorem \ref{schur modules to join-irreducibles}(2), $\filt\Cogen(M)$ is a completely join-irreducible torsion-free class. 
Hence, by Proposition \ref{prop:torsion-free-to-torsion-relation}, $M$ is the unique minimal extending module for $\leftidx{^\perp}M$. 
\end{proof}

\subsection{Kappa map and Proof of Theorem A}
We are now prepared to prove Theorem~\ref{main1}, which we restate below.

\begin{theorem}[Theorem~\ref{main1}]\label{inpapermain1}
Let $\Lambda$ be a finite dimensional algebra, $M$  a $\Lambda$-brick and $\calT_M=\filt(\Gen(M))$ the completely join-irreducible torsion class.
Then $$\kappa(\calT_M) = \leftidx{^\perp}{M}.$$
Moreover, $\kappa:\CJI(\tors \Lambda)\to \CMI(\tors \Lambda)$ is a bijection between the completely join-irreducible and completely meet irreducible torsion classes.
\end{theorem}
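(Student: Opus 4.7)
The plan is to establish the formula $\kappa(\calT_M) = \leftidx{^\perp}{M}$ by verifying directly that $\leftidx{^\perp}{M}$ is the unique maximum of the set $\{\calS\in\tors\Lambda : (\calT_M)_* \subseteq \calS,\ \calT_M \not\subseteq \calS\}$, and then to deduce bijectivity. Three preliminary checks are essentially immediate. First, Lemma~\ref{left perp}(1) shows $\leftidx{^\perp}{M}$ is a torsion class. Second, by Remark~\ref{useful_rmk}(2), $M \in ((\calT_M)_*)^\perp$, i.e., $\Hom((\calT_M)_*, M) = 0$, so $(\calT_M)_* \subseteq \leftidx{^\perp}{M}$. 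Third, since $M$ is a brick, $\End_\Lambda(M)$ is a division ring and in particular $\Hom(M,M)\neq 0$; as $M\in \calT_M$, this forces $\calT_M \not\subseteq \leftidx{^\perp}{M}$.

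The substantive and, I expect, hardest step is maximality: for any torsion class $\calS$ with $(\calT_M)_* \subseteq \calS$ and $\calT_M \not\subseteq \calS$, show $\calS \subseteq \leftidx{^\perp}{M}$. The plan is a proof by contradiction. If some $X\in \calS$ admits a nonzero map $f: X \to M$, I would set $I=\image(f)$; closure of $\calS$ under quotients gives $I\in \calS$. Either $I = M$, so $M\in\calS$ directly, or $I\subsetneq M$, in which case $M/I$ is a nonzero proper factor of the brick $M$, which by Property~\ref{propertyone} of minimal extending modules together with Remark~\ref{useful_rmk}(1) lies in $(\calT_M)_* \subseteq \calS$; the short exact sequence $0\to I \to M \to M/I \to 0$ then exhibits $M$ as an extension inside $\calS$. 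Either way $M\in\calS$, and combining this with $(\calT_M)_*\subseteq \calS$ yields $\calT_M = \filt((\calT_M)_*\cup \{M\})\subseteq \calS$ via Theorem~\ref{main: covers}, contradicting $\calT_M \not\subseteq \calS$. The delicate point is the case $I\subsetneq M$: the argument succeeds only because \emph{proper} factors of $M$ live in $(\calT_M)_*$, not merely in $\calT_M$.

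For bijectivity, Lemma~\ref{left perp}(3) combined with Theorem~\ref{main: covers} shows that $\leftidx{^\perp}{M}$ has a unique cover in $\tors\Lambda$, hence $\leftidx{^\perp}{M}\in \CMI(\tors\Lambda)$. Injectivity is immediate: if $\leftidx{^\perp}{M}=\leftidx{^\perp}{N}$ then $\{M\}=\ME(\leftidx{^\perp}{M})=\ME(\leftidx{^\perp}{N})=\{N\}$. For surjectivity, given $\calU\in \CMI(\tors\Lambda)$, I would take $M$ to be the brick labelling the unique cover $\calU^*\covers \calU$ via Theorem~\ref{main: covers} and verify $\calU = \leftidx{^\perp}{M}$: the containment $\calU \subseteq \leftidx{^\perp}{M}$ is Property~\ref{propertythree} of the minimal extending module, while any strict containment would force $\calU^* \subseteq \leftidx{^\perp}{M}$ (by the standard fact that in a complete lattice, every element above a completely meet-irreducible $\calU$ lies above its unique cover $\calU^*$), contradicting $M\in \calU^*\setminus \leftidx{^\perp}{M}$.
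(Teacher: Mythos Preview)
Your proof is correct and follows essentially the same strategy as the paper: verify that $\leftidx{^\perp}{M}$ lies in the defining set for $\kappa(\calT_M)$, then prove maximality by showing any such $\calS$ must contain $M$ in its torsion-free part (your extension argument with the image $I$ is exactly what the paper's terse phrase ``since $\calS$ is closed under extensions, we conclude that $M\in\calS^\perp$'' unpacks to). The only noticeable difference is in surjectivity: the paper appeals to Lemma~\ref{left perp}(2) together with the anti-isomorphism $\tors\Lambda\leftrightarrow\torf\Lambda$, whereas you argue directly at the element level using the minimal extending module of the unique cover of $\calU$---your route is slightly more self-contained and avoids the detour through torsion-free classes.
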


\begin{proof} 
Let $M$ be a brick.
Part~3 of Lemma~\ref{left perp} implies that $\leftidx{^\perp}M$ is completely meet irreducible.
Observe that $M\notin \leftidx{^\perp}M$ so $\calT_M\not\le \leftidx{^\perp}M$.
Let $(\calT_M)_*$ denote the unique element covered by $\calT_M$.
We claim that $(\calT_M)_*\le \leftidx{^\perp}M$.
Recall from Remark~\ref{useful_rmk}, $M\in ((\calT_M)_*)^\perp$.
Since $\filt\Cogen M$ is the smallest torsion-free class containing $M$, we have $ \filt\Cogen M = (\leftidx{^\perp}{M})^\perp\le ((\calT_M)_*)^\perp$.
Applying the lattice anti-isomorphism $\leftidx{^\perp}(-):\torf\Lambda\to\tors\Lambda$, we get $ (\calT_M)_*\le \leftidx{^\perp}{M}$.
This proves our claim.

Let $\calS$ be any torsion class satisfying
\begin{itemize}
\item $(\calT_M)_*\le  \calS$ and
\item $\calT_M \not \le \calS$.
\end{itemize}
We claim that $\calS\le \leftidx{^\perp}{M}$.
The first item implies that every proper factor of $M$ is in $\calS$ (because every proper factor of $M$ is in $(\calT_M)_*$; see Remark~\ref{useful_rmk}).
The second item implies that $M\not\in \calS$.
Since $\calS$ is closed under extensions, we conclude that $M\in \calS^\perp$.
Therefore $\filt\Cogen M \le \calS^\perp$.
As in the previous paragraph, we conclude that $\calS\le \leftidx{^\perp}{M}$.
Therefore $\kappa(\calT_M) = \leftidx{^\perp}M$.

It is clear that $\kappa$ is one-to-one.
Part~2 of Lemma~\ref{left perp} implies that each completely meet-irreducible torsion class is $\leftidx{^\perp}M$ for some brick $M$.
Therefore $\kappa$ is also surjective.
\end{proof}

\subsection{Extension of Kappa Map}
We now recall the definition of $\bar\kappa$ map from \ref{extended_kappa}
\begin{definition}
Let $L$ be a (possibly infinite) semidistributive lattice. 
Let $x$ be an element which has a canonical join representation such that $\kappa(j)$ is defined for each $j\in\CJR(x)$ and that $\Meet \{\kappa(j): j\in \CJR(x)\}$ exists.
Define $$\bar{\kappa}(x) = \Meet \{\kappa(j): j\in \CJR(x)\}.$$ 
\end{definition}

\begin{example}
\normalfont
Let $\Lambda$ be the hereditary algebra of type $A_2$ from Examples~\ref{ex1}, and observe that 
\begin{align*}
\bar{\kappa} (\module \Lambda) =& \Meet \{\kappa(\add(S_1)), \kappa(\add(S_2))\}\\
=& \Meet \{\add(S_2), \add(S_1,P_1)\}\\
=&\add(P_1,S_1) \cap \add(S_2)\\
=&0.
\end{align*}
\end{example}

\begin{corollary}\label{kappa tors} 
Let $\Lambda$ be a finite dimensional algebra, and let $\calT$ be a torsion class with canonical join representation $\Join_{\alpha\in A} \calT_{M_\alpha}$, where $\{M_\alpha\}$ is a set of 
bricks. 
Then:
$$\bar\kappa(\calT)= \bigcap_{\alpha\in A} \leftidx{^\perp}{M_\alpha}.$$
\end{corollary}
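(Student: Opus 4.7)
The plan is that this corollary follows almost immediately from Theorem~\ref{inpapermain1} by unwinding the definition of $\bar{\kappa}$ and using that $\tors \Lambda$ is a complete lattice with meet given by intersection.

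First, I would verify that $\bar{\kappa}(\calT)$ is actually defined. By hypothesis, $\CJR(\calT) = \Join_{\alpha\in A}\calT_{M_\alpha}$ with each $M_\alpha$ a brick. By Theorem~\ref{schur modules to join-irreducibles}(1), the map $[M]\mapsto \calT_M = \filt\Gen(M)$ sends isoclasses of bricks bijectively onto $\CJI(\tors \Lambda)$, so each joinand $\calT_{M_\alpha}$ is completely join-irreducible. Hence $\kappa(\calT_{M_\alpha})$ is defined by Theorem~\ref{inpapermain1}, and in fact equals $\leftidx{^\perp}{M_\alpha}$. Since $\tors \Lambda$ is a complete lattice \cite{DIRRT, GM}, the (possibly infinite) meet $\Meet\{\kappa(\calT_{M_\alpha}) : \alpha\in A\}$ exists, so $\bar{\kappa}(\calT)$ is defined.

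Second, I would compute the meet explicitly. In $\tors \Lambda$, meets are intersections, so
\[\bar{\kappa}(\calT) \;=\; \Meet_{\alpha\in A}\kappa(\calT_{M_\alpha}) \;=\; \bigcap_{\alpha\in A}\leftidx{^\perp}{M_\alpha},\]
where the first equality is the definition of $\bar{\kappa}$ and the second uses Theorem~\ref{inpapermain1}.

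There is no real obstacle here: the statement is essentially a direct corollary of Theorem~A packaged with the definition of $\bar{\kappa}$. The only point that requires a moment of care is confirming that each canonical joinand $\calT_{M_\alpha}$ is \emph{completely} join-irreducible (not merely join-irreducible), which is exactly the content of the bijection in Theorem~\ref{schur modules to join-irreducibles}, and that the infinite meet in the definition of $\bar{\kappa}$ is legitimate, which is handled by completeness of $\tors \Lambda$.
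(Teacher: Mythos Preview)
Your proposal is correct and matches the paper's approach: the corollary is stated without proof in the paper, being an immediate consequence of the definition of $\bar{\kappa}$, Theorem~\ref{inpapermain1}, and the fact that meets in the complete lattice $\tors\Lambda$ are intersections. Your extra care in verifying that each $\calT_{M_\alpha}$ is completely join-irreducible and that the meet exists is appropriate and fills in exactly what the paper leaves implicit.
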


Note that the minimal extending modules of $\bar{\kappa}(\calT)$ are precisely bricks $M_\alpha$ appearing in the canonical join representation of $\calT$.
We restate this result in terms of $\Hom$-orthogonal bricks using the following theorem.
\begin{theorem}\cite[Theorem~1.0.8]{BCZ}\label{torsion_cjr}
Let $\Lambda$ be a finite dimensional algebra, and suppose that $\calE$ is a collection of bricks over $\Lambda$.
Then $\Join \{\calT_M:\, M\in \calE\}$ is a canonical join representation of some torsion class if and only if 
$\Hom_{\Lambda}(M, M') = \Hom_{\Lambda}(M', M) =0$ for each pair $M, M'\in \calE$.  \end{theorem}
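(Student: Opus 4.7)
The plan is to prove both directions of the biconditional by combining Theorem~\ref{inpapermain1} (the formula $\kappa(\calT_M) = \leftidx{^\perp}M$), Proposition~\ref{cjr_and_brick_labeling}, and the join-semidistributive law in $\tors\Lambda$. For the forward direction, suppose $\CJR(\calT) = \Join_{M \in \calE}\calT_M$ and fix distinct $M, M' \in \calE$; by symmetry it suffices to derive $\Hom(M', M) = 0$. Proposition~\ref{cjr_and_brick_labeling} produces a cover $\calY_M \covered \calT$ with $M$ as its minimal extending module, so $\calT = \filt(\calY_M \cup \{M\})$ and $\Hom(\calY_M, M) = 0$ by Property~\ref{propertythree}. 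I would show $\calT_{M'} \subseteq \calY_M$, from which $M' \in \calY_M$ yields $\Hom(M', M) = 0$. Since $\calT_{M'} \subseteq \calT$ and $\calY_M$ is covered by $\calT$, either $\calT_{M'} \subseteq \calY_M$ (in which case we are done) or $\calY_M \vee \calT_{M'} = \calT$. In the latter case, combining with $\calY_M \vee \calT_M = \calT$ and applying~\ref{jsd} gives $\calT = \calY_M \vee (\calT_M \wedge \calT_{M'})$. Irredundance of a canonical join representation forces $\calE$ to be an antichain, so $\calT_M \wedge \calT_{M'} \subsetneq \calT_M$ and hence $\calT_M \wedge \calT_{M'} \subseteq (\calT_M)_*$. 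The proof of Theorem~\ref{inpapermain1} shows $(\calT_M)_* \subseteq \leftidx{^\perp}M$, and $\Hom(\calY_M, M) = 0$ gives $\calY_M \subseteq \leftidx{^\perp}M$; since $\leftidx{^\perp}M$ is a torsion class by Lemma~\ref{left perp} (hence closed under joins), we conclude $\calT \subseteq \calY_M \vee (\calT_M)_* \subseteq \leftidx{^\perp}M$, contradicting $M \in \calT$.

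For the reverse direction, assume $\calE$ is pairwise Hom-orthogonal and set $\calT = \Join_{M \in \calE}\calT_M$. Irredundance is clean: if $\calT_M \subseteq \filt\bigl(\bigcup_{M' \ne M}\Gen M'\bigr)$, then $M$ admits a filtration whose nonzero bottom piece $M_1 \subseteq M$ lies in $\Gen M'$ for some $M' \ne M$; composing a surjection $(M')^k \twoheadrightarrow M_1$ with the inclusion $M_1 \hookrightarrow M$ and projecting to one component yields a nonzero map $M' \to M$, contradicting Hom-orthogonality. For the lowness condition, I would invoke the $\kappa$-based characterization of canonical join representations in complete semidistributive lattices given in \cite[Theorem~3.2]{RST}: $\Join A$ is canonical if and only if $A$ is an antichain of completely join-irreducibles satisfying $a' \le \kappa(a)$ for every $a \ne a'$ in $A$. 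Each $\calT_M$ is completely join-irreducible by Theorem~\ref{schur modules to join-irreducibles}, and pairwise Hom-orthogonality translates, via $\kappa(\calT_M) = \leftidx{^\perp}M$, to $\calT_{M'} \subseteq \leftidx{^\perp}M$ for $M \ne M'$ (since $\calT_{M'}$ is the smallest torsion class containing $M'$ and $\leftidx{^\perp}M$ is a torsion class). Hence $\Join_M \calT_M$ is the canonical join representation of $\calT$.

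The main obstacle is the lowness step in the reverse direction: \emph{lowest} is a global property of a join representation that cannot be tested locally at individual joinands, and this subtlety is amplified when $\tors\Lambda$ is infinite. The cleanest remedy is the appeal to \cite[Theorem~3.2]{RST} above; without it, one would need to verify lowness by hand, starting from an arbitrary irredundant join representation $\Join B = \calT$ and producing, for each $M \in \calE$, some $b \in B$ with $\calT_M \subseteq b$---which would require a delicate argument using the brick structure of $M$, filtrations of its modules, and the Hom-orthogonality of $\calE$.
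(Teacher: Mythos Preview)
The paper does not prove this statement; it is quoted verbatim from \cite[Theorem~1.0.8]{BCZ} and used as a black box. So there is no in-paper proof to compare against, and your proposal must stand on its own.

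Your forward direction is essentially correct. One caution: Proposition~\ref{cjr_and_brick_labeling}, which you invoke to produce the lower cover $\calY_M$, is itself cited in the paper from \cite[Theorem~1.08 and Lemma~3.2.4]{BCZ}---and ``1.08'' is almost certainly a typo for ``1.0.8'', the very result you are proving. You should therefore check that the part you actually use (each canonical joinand $\calT_M$ labels some lower cover $\calY_M\covered\calT$) is established in \cite{BCZ} independently of the biconditional, or else supply a direct argument. This is not hard: with $\calT'=\Join_{N\ne M}\calT_N\subsetneq\calT$ by irredundance, one shows $(\calT_M)_*\vee\calT'$ is strictly below $\calT$ (otherwise lowness is violated) and then any torsion class between it and $\calT$ is a $\calY_M$ of the required kind; but you should make this explicit rather than cite a possibly circular reference.

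Your reverse direction has a real gap, and you have correctly located it. The paper cites \cite[Theorem~3.2]{RST} \emph{only} for the assertion that $\kappa$ is a bijection on completely semidistributive lattices; it does not attribute to that reference the stronger characterization you invoke (``$\Join A$ is canonical iff $A$ is an antichain of completely join-irreducibles with $a'\le\kappa(a)$ for all $a\ne a'$''). That characterization is classical for \emph{finite} semidistributive lattices, but $\tors\Lambda$ is typically infinite, and extending lowness to the infinite complete setting is precisely the nontrivial content here. Unless you can point to the exact statement in \cite{RST} (or elsewhere) covering the infinite case, your reverse direction is incomplete. The original proof in \cite{BCZ} handles lowness by a module-theoretic argument specific to $\tors\Lambda$, using filtrations and the brick property; absent a clean lattice-theoretic citation, that is the argument you would need to reproduce.
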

\begin{proposition}\label{left perp 2}
Let $\Lambda$ be an artin algebra and $\{M_\alpha\}_{\alpha\in A}$ be $\Hom$-orthogonal bricks.  \begin{enumerate}
\item $\bigcap_{\alpha\in A}\leftidx{^\perp}{M_\alpha}$ is a torsion class.
\item  $(\bigcap_{\alpha\in A}\leftidx{^\perp}M_\alpha)^\perp=\Join\filt\Cogen {M_i}$.
\item $\ME(\bigcap_{\alpha\in A}\leftidx{^\perp}M_\alpha)=\{M_\alpha\}_{\alpha\in A}$.
\end{enumerate} 
\end{proposition}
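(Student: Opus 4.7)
The plan is to view Proposition \ref{left perp 2} as a multi-brick extension of Lemma \ref{left perp}, in which parts (1) and (2) follow from lattice duality and part (3) needs the most care. For (1), I would note that each $\leftidx{^\perp}M_\alpha$ is a torsion class by Lemma \ref{left perp}(1), and that closure under quotients and under extensions survives arbitrary intersection. For (2), I would apply the anti-isomorphism $(-)^\perp\colon \tors\Lambda \to \torf\Lambda$ of Remark \ref{antiso}, which sends meets to joins, and use Lemma \ref{left perp}(2) term-by-term:
\[\left(\bigcap_{\alpha\in A}\leftidx{^\perp}M_\alpha\right)^\perp = \bigvee_{\alpha\in A}\left(\leftidx{^\perp}M_\alpha\right)^\perp = \bigvee_{\alpha\in A}\filt\Cogen M_\alpha.\]

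For (3), set $\calT = \bigcap_{\alpha\in A}\leftidx{^\perp}M_\alpha$. The forward inclusion $\{M_\alpha\}\subseteq \ME(\calT)$ is proved by verifying properties (P1), (P2), (P3) of Definition \ref{def:min-ext-module} for each $M_\alpha$. Property (P3) is immediate from the definition of $\calT$. For (P1), given a proper quotient $\pi\colon M_\alpha\twoheadrightarrow N$ and a map $f\colon N\to M_\beta$, the composition $f\pi\colon M_\alpha \to M_\beta$ vanishes by Hom-orthogonality when $\beta\ne \alpha$, and because $\End(M_\alpha)$ is a division ring when $\beta=\alpha$ (an isomorphism $f\pi$ would force $\pi$ to be an isomorphism, contradicting properness); either way $f=0$. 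For (P2), I would apply $\Hom(-,M_\beta)$ to $0\to M_\alpha\to X\to T\to 0$ with $T\in \calT$: the term $\Hom(T,M_\beta)$ vanishes because $T\in \calT$; for $\beta\ne \alpha$, Hom-orthogonality kills $\Hom(M_\alpha,M_\beta)$; for $\beta=\alpha$, any element of $\End(M_\alpha)$ lifting to $X$ is either zero or an automorphism (brick property), and an automorphism lift would split the sequence against hypothesis.

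For the reverse inclusion, let $N\in \ME(\calT)$. Property (P3) gives $N\in \calT^\perp$; since a nonzero brick cannot sit in both $\calT$ and $\calT^\perp$, we conclude $N\notin \calT$, so there is some $\alpha_0$ with $\Hom(N,M_{\alpha_0})\ne 0$. Any nonzero $\phi\colon N\to M_{\alpha_0}$ must be injective: otherwise $\image\phi$ is a proper factor of $N$, hence in $\calT$ by (P1) for $N$, yet it is a nonzero submodule of $M_{\alpha_0}$, contradicting $\calT\subseteq \leftidx{^\perp}M_{\alpha_0}$. Assuming for contradiction that $N\subsetneq M_{\alpha_0}$, the sequence $0\to N\to M_{\alpha_0}\to M_{\alpha_0}/N\to 0$ is non-split (since $M_{\alpha_0}$ is indecomposable), its quotient is a proper factor of $M_{\alpha_0}$ and hence lies in $\calT$ by the forward direction's (P1) applied to $M_{\alpha_0}$, and (P2) for $N$ then forces $M_{\alpha_0}\in \calT$, contradicting $\End(M_{\alpha_0})\ne 0$. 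Thus $N = M_{\alpha_0}$. I expect the most delicate step to be this reverse-inclusion argument: producing the injection $N\hookrightarrow M_{\alpha_0}$ uses (P1) for $N$ together with $\calT\subseteq \leftidx{^\perp}M_{\alpha_0}$, and the identification $N=M_{\alpha_0}$ then requires the non-split hypothesis of (P2) for $N$ together with the forward direction's (P1) for $M_{\alpha_0}$ and the indecomposability of $M_{\alpha_0}$ to be combined cleanly.
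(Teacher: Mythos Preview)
Your arguments for (1) and (2) match the paper's exactly: the intersection of torsion classes is a torsion class, and the anti-isomorphism $(-)^\perp\colon\tors\Lambda\to\torf\Lambda$ sends the meet $\bigcap_\alpha \leftidx{^\perp}M_\alpha$ to the join $\Join_\alpha (\leftidx{^\perp}M_\alpha)^\perp = \Join_\alpha \filt\Cogen M_\alpha$ via Lemma~\ref{left perp}(2).

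For (3) you take a genuinely different and more elementary route. The paper disposes of (3) in one line, writing only ``(3) follows from (2)'': having identified $(\bigcap_\alpha\leftidx{^\perp}M_\alpha)^\perp$ as $\Join_\alpha\filt\Cogen M_\alpha$, Hom-orthogonality of the $M_\alpha$ makes this a canonical join representation in $\torf\Lambda$ (the torsion-free dual of Theorem~\ref{torsion_cjr}), and the brick-labelling machinery of \cite{BCZ} linking canonical joinands to cover relations (the dual of Proposition~\ref{cjr_and_brick_labeling} combined with Proposition~\ref{prop:torsion-free-to-torsion-relation}) then pins down the minimal extending modules as exactly the $M_\alpha$. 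By contrast, you verify (P1)--(P3) for each $M_\alpha$ by hand and establish the reverse inclusion by a direct module-theoretic argument: produce a nonzero map $N\to M_{\alpha_0}$, upgrade it to an injection using (P1) for $N$, and then combine (P2) for $N$ with the forward direction's (P1) for $M_{\alpha_0}$ and indecomposability of the brick $M_{\alpha_0}$ to exclude a proper inclusion. Your argument is correct and entirely self-contained---it never invokes canonical join representations---whereas the paper's one-line proof is much shorter but leans on the structural framework already in place.
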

\begin{proof}
(1) follows from the fact that intersection of torsion classes is a torsion class. (2) Since $(-)^\perp:\tors\Lambda\to\torf\Lambda$ is a lattice anti-isomorphism, $(\bigcap\leftidx{^\perp}M_\alpha)^\perp=\Join(\leftidx{^\perp}M_\alpha)^\perp=\Join\filt\Cogen {M_\alpha}$. (3) follows from (2).
\end{proof}

\begin{remark}
\normalfont
Proposition~\ref{left perp 2} implies that $\bar{\kappa}$ can be computed directly from the labeling of the cover relations in $\tors \Lambda$ by minimal extending modules.
If $\{M_{\alpha}\}_{\alpha\in A}$ label the lower cover relations for a torsion class $\calT$, then $\{M_{\alpha}\}_{\alpha\in A}$ is precisely the set of minimal extending modules labeling the upper cover relations of $\bar{\kappa}(\calT)$.

\end{remark}
\begin{remark}\normalfont
For an infinite semidistributive lattice $L$, $\bar\kappa$ is not necessarily defined for all elements $x\in L$. The existence of $\bar\kappa(x)$ depends on: (1) the existence of $\CJR(x)$; 
(2) every element $j\in\CJR(x)$ being completely join-irreducible;
(3) the existence of $\kappa(j)$ for each element $j\in \CJR(x)$;
 (4) the existence of the meet $\Meet \{\kappa(j): j\in \CJR(x)\}$. 
\end{remark}
We will use the following two subclasses of $\tors\Lambda$, for the purpose of defining $\bar\kappa$ and also for the purpose of relating torsion classes to wide subcategories of $\module\Lambda$.
\begin{definition}
Let $\Lambda$ be a finite dimensional algebra. \\
Let $\ftors(\Lambda)$ be the set of functorially finite torsion classes over $\Lambda$.\\
Let $\tors_0(\Lambda)$ denote the set of torsion classes $\calT$ such such that  $\CJR(\calT)=\Join_{\alpha\in A} \calT_{M_\alpha}$ where $\{M_\alpha\}_{\alpha\in A}$ is a set of bricks.
\end{definition}

Since $\kappa$ is defined for completely join-irreducible elements---and these are exactly the torsion classes $\calT_M$ where $M$ is a brick---and since torsion classes are closed under arbitrary intersections, the set $\tors_0(\Lambda)$ consists of exactly all the torsion classes $\calT$ such that $\bar{\kappa}(\calT)$ exists.


We have the following inclusions of sets: 
\begin{proposition}\label{first inclusion}Let $\Lambda$ be a finite dimensional algebra. There are inclusions: 
$$\ftors(\Lambda)\subseteq \tors_0(\Lambda)\subseteq \{\calT\in\tors(\Lambda)| \CJR(\calT) \text{\ exists}\}\subseteq\tors(\Lambda).$$
\end{proposition}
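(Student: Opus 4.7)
The plan is to handle the three inclusions in reverse order of difficulty. The rightmost inclusion $\{\calT\in\tors\Lambda : \CJR(\calT)\text{ exists}\}\subseteq\tors\Lambda$ is immediate, and the middle inclusion $\tors_0(\Lambda)\subseteq\{\calT\in\tors\Lambda : \CJR(\calT)\text{ exists}\}$ follows tautologically: membership in $\tors_0(\Lambda)$ is defined to require precisely that $\CJR(\calT)$ exists and has the specific form $\Join_{\alpha\in A}\calT_{M_\alpha}$ with $M_\alpha$ a brick.

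The content of the statement lies in the leftmost inclusion $\ftors(\Lambda)\subseteq\tors_0(\Lambda)$. Given a functorially finite torsion class $\calT$, the plan is to exhibit its canonical join representation explicitly using the brick labeling of the lower cover relations. First I would invoke the fact (due to Demonet--Iyama--Reading--Reiten--Thomas) that a functorially finite torsion class has only finitely many lower covers in $\tors\Lambda$, say $\calY_1\covered\calT,\ldots,\calY_n\covered\calT$. By Theorem \ref{main: covers} applied to each $\calY_i$, there is a unique brick $M_i\in\ME(\calY_i)$ such that $\calT=\filt(\calY_i\cup\{M_i\})$.

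Next I would show $\calT = \Join_{i=1}^n \calT_{M_i}$. The inclusion $\supseteq$ follows from $M_i\in\calT$, which implies $\calT_{M_i}=\filt(\Gen(M_i))\subseteq\calT$ since $\calT$ is closed under quotients and extensions. For $\subseteq$, suppose $\calS:=\Join_{i}\calT_{M_i}\subsetneq\calT$; then since the interval $[\calS,\calT]$ is nonempty, there is some $\calY_j$ with $\calS\subseteq\calY_j\covered\calT$, so $M_j\in\calT_{M_j}\subseteq\calY_j$, contradicting Property~\ref{propertythree} in the definition of minimal extending module for $\calY_j$. To upgrade this join representation to the canonical one, I would apply Theorem~\ref{torsion_cjr}: it suffices to verify that the $M_i$ are pairwise $\Hom$-orthogonal bricks, which is a standard consequence of the labeling theory (for $i\ne j$, any nonzero map $M_i\to M_j$ would force $M_j$ into the torsion class generated by $M_i$ beneath $\calY_j$, contradicting minimality), and Theorem~\ref{torsion_cjr} then identifies $\Join_i\calT_{M_i}$ with $\CJR(\calT)$. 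Since each $M_i$ is a brick, this places $\calT$ in $\tors_0(\Lambda)$.

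The main obstacle is not logical but bibliographic: both the finiteness of the set of lower covers of a functorially finite torsion class and the pairwise $\Hom$-orthogonality of the bricks labeling them are known results scattered across \cite{DIRRT} and \cite{BCZ}, and the cleanest proof consists in properly assembling these citations rather than re-deriving them from scratch.
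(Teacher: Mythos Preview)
Your proposal is correct and follows essentially the same route as the paper: both dismiss the second and third inclusions as tautological, then for $\ftors(\Lambda)\subseteq\tors_0(\Lambda)$ identify the finitely many lower covers $\calY_i\covered\calT$ of a functorially finite $\calT$, extract the labeling bricks $M_i$ via Theorem~\ref{main: covers}, and conclude $\Join_i\calT_{M_i}=\CJR(\calT)$ (the paper via \cite[Corollary~3.2.6]{BCZ}, you via Theorem~\ref{torsion_cjr}). One point to tighten: your step ``since the interval $[\calS,\calT]$ is nonempty, there is some $\calY_j$ with $\calS\subseteq\calY_j\covered\calT$'' is not automatic in an infinite complete lattice---it is precisely \cite[Theorem~1.3]{DIJ}, which the paper cites explicitly and which should be added to your list of bibliographic ingredients alongside finiteness of lower covers and $\Hom$-orthogonality.
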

 \begin{proof}
 We only need to prove the first inclusion. Let $\calT\in\ftors(\Lambda)$, i.e. let $\calT$ be a functorially finite torsion class. For the entire proof we will use the fact that there is a bijection between functorially finite torsion classes 
$\ftors(\Lambda)$ and support $\tau$-tilting modules (see \cite[Theorem 2.7]{AIR}).  Next (see \cite[Theorem 2.33]{AIR}) implies that all covering relations $\calT\covers \calT_i$ correspond to mutations of support $\tau$-tilting modules. The fact that for each $\tau$-tilting module, there are only finitely many mutations, implies that there are only finitely many 
functorially finite
$\calT_i$ such that $\calT\covers \calT_i$.
Now Theorem~\ref{main: covers} says that for each cover relation $\calT_i \covered \calT$, there is a unique minimal extending module $M_i$ such that $\calT = \filt(\calT_i\cup M_i)$.
(Alternatively, $M_i$ is a minimal co-extending module for the torsion-free class $\calT^\perp$.)
It follows from \cite[Theorem 1.3]{DIJ} that if $\calS< \calT$ in $\tors \Lambda$, there exists some $i$ such that $\calS\le\calT_i$.
This is exactly the hypothesis of \cite[Corollary~3.2.6]{BCZ}, 
which says that $\Join_{i\in I} \calT_{M_i}$ is the canonical join representation of $\calT$.
Therefore $\calT\in\tors_0(\Lambda)$
 \end{proof}
 
\begin{remark}\normalfont
There exist $\Lambda$ such that the first two inclusions are proper (see Example \ref{Kronecker torsions}). It is still an open question if the last containment above can be made proper.  
\end{remark}

\begin{example}\label{Kronecker torsions}
\normalfont
Let $\Lambda= kQ$ where $Q$ is the Kronecker quiver from Figure~\ref{fig:kronecker}.
Denote by $\calR$ and $\calI$ the subcategory of regular modules and preinjective modules respectively. 
 Denote by $\{R_\lambda\}$ the regular module  \[\begin{tikzpicture}
\node (A) at (0,0) {k};
\node (B) at (2,0) {k};
\draw[->] ([yshift=-.2em] A.east)--([yshift=-.2em]B.west) node[midway,below] {$[m]$};
  \draw[->] ([yshift=.2em] A.east)--([yshift=.2em] B.west) node[midway,above]
  {$[n]$};
  \end{tikzpicture}\] where $(m:n)=\lambda\in\mathbb P^1(k)$.
  Consider the torsion class $\calR\cup\calI$.
 It has a canonical join representation $\Join\limits_{\lambda\in \mathbb P^1(k)}\{\calT_{R_\lambda}\}$ and hence  $\calR\cup\calI\in\tors_0(\Lambda)\setminus \ftors(\Lambda)$.
 On the other hand, consider the torsion class $\calI$.
 It has a canonical join representation which is $\Join\{\calI\}$ (join of itself).
 Note that $\calI$ is join irreducible but not completely join irreducible. 
 Therefore $\calI\in \{\calT\in\tors(\Lambda)| \CJR(\calT) \text{\ exists}\}\setminus \tors_0(\Lambda)$. 
One can check that $\bar{\kappa}(\calI\cup\calR)=\calI$. 
However, since $\calI$ is not completely join-irreducible, $\bar\kappa(\calI)$ is not defined. Hence $\bar\kappa^2(\calR\cup\calI)$ is not defined.
\end{example}

\section{Proof of Theorem~\ref{main}}\label{main sec}
In this section, we first recall some facts about wide subcategories.
Then we prove Theorem \ref{main} as Theorem~\ref{k and d} in subsection~\ref{Theorem D}.  
\subsection{Wide Subcategories}\label{wide}
In this section we recall the definition of a wide subcategory, and introduce two important maps between $\wide \Lambda$ and $\tors \Lambda$.
The main result is Proposition~\ref{simple=coext} which relates the simple objects of a wide subcategory and minimal (co)extending modules of an associated torsion class. 

Let $\calA$ be an abelian category. A subcategory $\calD$ is an \emph{abelian subcategory} of $\calA$ if $\calD$ is abelian and the embedding functor $i:\calD\to \calA$ is exact. An abelian subcategory $\calW$ of $\calA$ is called \emph{wide} if $\calW$ is closed under extensions. It is well-known that the following are equivalent:

\begin{enumerate}
\item $\calW$ is a wide subcategory.
\item $\calW$ is closed under extensions, kernels and cokernels.
\item $\calW$ satisfies the two-of-three property. i.e. Let $0\to A\to B\to C\to 0$ be an exact sequence in $\calA$. If two of three objects $A, B, C$ belong to $\calW$, then so does the third one.
\end{enumerate}

\begin{remark}
\normalfont
Wide subcategories are  also referred to as \emph{thick subcategories}. 
\end{remark}

The relation between wide subcategories and torsion classes is studied in \cite{MS}. Let $\Lambda$ be an artin algebra. Denote by $\wide \Lambda$ the class of wide subcategories of $\module\Lambda$. We define maps between $\wide \Lambda$ and $\tors \Lambda$ as in \cite{MS}:
\begin{eqnarray*}
\alpha: \tors \Lambda&\to& \wide \Lambda\\
\calT &\mapsto& \{X\in \calT: \forall (g:Y\to X)\in \calT,\, \ker(g)\in \calT\}\\
\beta:  \wide \Lambda &\to&\tors \Lambda\\
\calW &\mapsto& \calT_{\calW} = \filt(\Gen(\calW)).
\end{eqnarray*}

Dually, one can define maps:
\begin{eqnarray*}
\alpha': \torf(\Lambda)&\to& \wide \Lambda\\
\calF &\mapsto& \{X\in \F: \forall (g:X\to Y)\in \F,\, \cok(g)\in \F\}\\
\beta':  \wide \Lambda &\to&\torf(\Lambda)\\
\calW &\mapsto& \F_{\calW} = \filt(\Cogen(\calW)).
\end{eqnarray*}

\begin{prop}\cite[Proposition 3.3]{MS} \label{wide 1}
Let $\calW$ be a wide subcategory in $\module\Lambda$. Then $\alpha\beta(\calW)=\calW$ and $\alpha'\beta'(\calW)=\calW$.
\end{prop}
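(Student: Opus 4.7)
The plan is to prove the two containments $\calW \subseteq \alpha\beta(\calW)$ and $\alpha\beta(\calW) \subseteq \calW$ separately; the dual equality $\alpha'\beta'(\calW) = \calW$ will then follow formally from the first via the anti-isomorphism $\tors\Lambda \leftrightarrow \torf\Lambda$ of Remark~\ref{antiso}, or equivalently by running the same argument inside $\module\Lambda^{\mathrm{op}}$.

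For the first containment, I would strengthen the claim and prove the following by induction on the length of a $\Gen(\calW)$-filtration of $Y$: for every $X \in \calW$ and every morphism $g\colon Y \to X$ with $Y \in \beta(\calW) = \filt(\Gen(\calW))$, one has $\image(g) \in \calW$ and $\ker(g) \in \beta(\calW)$. The base case $Y \in \Gen(\calW)$ is the crux: fix an epi $p\colon W \twoheadrightarrow Y$ with $W \in \calW$, so that $gp\colon W \to X$ is a morphism between two objects of the wide subcategory~$\calW$; closure of $\calW$ under kernels and cokernels gives $\image(g)=\image(gp)\in \calW$, and $\ker(g) = p(\ker(gp))$ lies in $\Gen(\calW) \subseteq \beta(\calW)$. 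For the inductive step I would pick a short exact sequence $0 \to Y' \to Y \to Y'' \to 0$ with $Y'' \in \Gen(\calW)$ and $Y'$ of smaller filtration length, apply the induction hypothesis to $g|_{Y'}\colon Y' \to X$ to put $g(Y') \in \calW$ and $\ker(g|_{Y'})\in \beta(\calW)$, then observe that $X/g(Y') \in \calW$ by cokernel-closure, and apply the base case to the induced map $\bar g\colon Y'' \to X/g(Y')$. The standard snake-style identifications $\image(g)/g(Y') \cong \image(\bar g)$ and $\ker(g)/\ker(g|_{Y'}) \cong \ker(\bar g)$ then exhibit $\image(g)$ as an extension in $\calW$ and $\ker(g)$ as an extension in $\beta(\calW)$.

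For the reverse containment I would again induct on the length of a $\Gen(\calW)$-filtration, now of $X \in \alpha\beta(\calW)$. Write $0 \to X_{n-1} \to X \to X/X_{n-1} \to 0$ with $X/X_{n-1} \in \Gen(\calW)$ and $X_{n-1}$ of smaller filtration length. The inclusion $X_{n-1} \hookrightarrow X$ transfers the $\alpha$-property (composing any map into $X_{n-1}$ with the monomorphism into $X$ preserves kernels), so $X_{n-1} \in \alpha\beta(\calW)$, and the induction hypothesis puts $X_{n-1} \in \calW$. To handle the top quotient, pull back an epi $q\colon W \twoheadrightarrow X/X_{n-1}$ with $W\in \calW$ along $X \twoheadrightarrow X/X_{n-1}$ to obtain $E$ fitting into $0 \to X_{n-1} \to E \to W \to 0$; extension-closure of $\calW$ gives $E \in \calW \subseteq \beta(\calW)$. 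The induced epi $E \twoheadrightarrow X$ has kernel $\ker(q)$, so the $\alpha$-property of $X$ forces $\ker(q) \in \beta(\calW)$. The strong form of the first containment, applied to the inclusion $\ker(q) \hookrightarrow W \in \calW$, then upgrades this to $\ker(q) \in \calW$; cokernel-closure yields $X/X_{n-1} \cong \cok(\ker(q) \hookrightarrow W) \in \calW$, and one last extension completes the induction.

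The main difficulty I anticipate is the simultaneous tracking of images and kernels in the inductive step of the first direction: the induction only propagates because the target $X/g(Y')$ of the induced map $\bar g$ happens to be in $\calW$, which in turn relies on $\calW$ being closed under cokernels. Thus the argument genuinely uses the \emph{two-of-three} characterization of wide subcategories, not merely extension-closure; this is also what makes the $\alpha$-condition (involving kernels of $\calT$-morphisms) the correct notion to invert $\beta$.
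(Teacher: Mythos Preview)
The paper does not give its own proof of this proposition; it simply cites \cite[Proposition~3.3]{MS} and uses the result as a black box. There is therefore nothing in the paper to compare your argument against.

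That said, your proposal is correct and is essentially the standard argument (close to the one in \cite{MS}). Two small remarks on the write-up. First, in the base case of the first containment you assert $\ker(g)=p(\ker(gp))$; this is right because $p$ is surjective and $\ker(gp)=p^{-1}(\ker g)$, but it is worth saying so explicitly, since this identification is what puts $\ker(g)$ into $\Gen(\calW)$. Second, in the inductive step you invoke the identifications $\image(g)/g(Y')\cong\image(\bar g)$ and $\ker(g)/\ker(g|_{Y'})\cong\ker(\bar g)$; these are exactly the snake-lemma outputs for the diagram with rows $0\to Y'\to Y\to Y''\to 0$ and $0\to g(Y')\to X\to X/g(Y')\to 0$, using that the left vertical map $Y'\twoheadrightarrow g(Y')$ has trivial cokernel. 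It would strengthen the exposition to name that diagram once rather than leave it as a ``snake-style identification''.

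Your reverse-containment argument is also fine: the key trick of pulling back $q\colon W\twoheadrightarrow X/X_{n-1}$ along $X\twoheadrightarrow X/X_{n-1}$ to manufacture a $\beta(\calW)$-epi $E\twoheadrightarrow X$ with kernel $\ker(q)$, and then feeding $\ker(q)\hookrightarrow W$ back into the strengthened first containment, is exactly the right idea and uses the full two-of-three property of $\calW$ as you note.
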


For a general algebra $\Lambda$, the maps $\alpha$ and $\beta$ are \emph{not} mutually inverse bijections.
By restricting the domain of $\alpha$ to functorially finite 
torsion classes 
$\ftors\Lambda$ and the domain of $\beta$ to certain special subset of $\fwide\Lambda$, where $\fwide\Lambda$ denotes functorially finite wide subcategories,  Marks and  \v{S}\v{t}ov\'{\i}\v{c}ek obtain a bijection.

\begin{proposition}\cite[Theorem 3.10]{MS}\label{ftor=fwide}
Let $\Lambda$ be an artin algebra. 
 Then $\alpha$ yields a bijective correspondence between\\
 (1) functorially finite torsion classes\\
 (2) functorially finite wide subcategories $W$ such that $\beta(W)$ is functorially finite.
\end{proposition}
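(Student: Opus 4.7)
The plan is to prove the bijection in two stages, using the Adachi--Iyama--Reiten (AIR) $\tau$-tilting correspondence together with the brick labeling of cover relations from Section~\ref{torsion sec}. Stage one establishes that $\alpha$ maps $\ftors\Lambda$ into the claimed target set; stage two invokes Prop.~\ref{wide 1} to obtain the inverse.

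For stage one, fix $\calT\in\ftors\Lambda$. The proof of Prop.~\ref{first inclusion} already shows that $\calT$ has only finitely many lower covers $\calT_1,\dots,\calT_n$, that Thm.~\ref{main: covers} labels these by bricks $M_1,\dots,M_n$ coinciding with the canonical joinands of $\calT$, and that Thm.~\ref{torsion_cjr} forces the $M_i$ to be pairwise $\Hom$-orthogonal. Hom-orthogonality makes $\filt\{M_1,\dots,M_n\}$ a wide subcategory of finite length with simples $\{M_i\}$; over a finite-dimensional algebra such a wide subcategory is automatically functorially finite. I intend to identify $\alpha(\calT) = \filt\{M_1,\dots,M_n\}$. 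Granting this identification, $\beta\alpha(\calT) = \filt\Gen(\alpha(\calT))\supseteq \Join_i\calT_{M_i} = \calT$, and since $\alpha(\calT)\subseteq\calT$ forces the reverse inclusion, we obtain $\beta\alpha(\calT) = \calT\in\ftors\Lambda$, placing $\alpha(\calT)$ in the target set.

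For stage two, Prop.~\ref{wide 1} already gives $\alpha\beta(\calW) = \calW$ on all of $\wide\Lambda$; combined with the identity $\beta\alpha(\calT) = \calT$ for $\calT\in\ftors\Lambda$ just established, this shows $\alpha$ and $\beta$ are mutually inverse between $\ftors\Lambda$ and $\alpha(\ftors\Lambda)$. To match the image with the stated target, any $\calW\in\fwide\Lambda$ with $\beta(\calW)\in\ftors\Lambda$ satisfies $\calW = \alpha(\beta(\calW))\in\alpha(\ftors\Lambda)$ by Prop.~\ref{wide 1}, while any $\calW\in\alpha(\ftors\Lambda)$ lies in the target by stage one.

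The principal obstacle is the identification $\alpha(\calT) = \filt\{M_1,\dots,M_n\}$. For the inclusion $M_i\in\alpha(\calT)$: property~\ref{propertythree} directly handles maps $g:Y\to M_i$ with $Y\in\calT_i$, but full membership requires handling arbitrary $Y\in\calT\setminus\calT_i$; here I expect to invoke the dual description of $M_i$ as a minimal co-extending module for $\calT^\perp$ (Prop.~\ref{prop:torsion-free-to-torsion-relation}) together with Remark~\ref{min_coext_rmk}, which controls how $M_i$ sits inside $\calT$ relative to the torsion-free class. The reverse inclusion --- that $\alpha(\calT)$ contains no simples beyond the $M_i$ --- is the harder point; I would argue that any simple object of the abelian category $\alpha(\calT)$ must be a brick $M$ in $\calT$ whose associated torsion class $\calT_M$ labels a lower cover of $\calT$, forcing $M\in\{M_1,\dots,M_n\}$. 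Making this last step rigorous is cleanest via support $\tau$-tilting reduction, or equivalently via the correspondence between semibricks and simples of wide subcategories; this step is the real content of the theorem, with everything else being bookkeeping on top of Prop.~\ref{wide 1}.
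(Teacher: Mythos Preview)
The paper does not prove this proposition; it is quoted verbatim from Marks--\v{S}\v{t}ov\'{\i}\v{c}ek \cite[Theorem~3.10]{MS} and used as a black box. So there is no in-paper argument to compare against, and your proposal must stand on its own.

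There is a genuine gap. The sentence ``over a finite-dimensional algebra such a wide subcategory is automatically functorially finite'' is false, and it is precisely here that the real work of the Marks--\v{S}\v{t}ov\'{\i}\v{c}ek theorem lies. A finite semibrick need not generate a functorially finite wide subcategory: for the Kronecker algebra take a single regular simple $R_\lambda$; then $\filt\{R_\lambda\}$ is the homogeneous tube at $\lambda$, which is \emph{not} functorially finite (equivalently, $\calT_{R_\lambda}$ is not among the support $\tau$-tilting torsion classes). What makes $\filt\{M_1,\dots,M_n\}$ functorially finite in your setting is the additional hypothesis that the $M_i$ arise as the brick labels below a \emph{functorially finite} $\calT$, and establishing this implication is exactly the content you are trying to prove. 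Marks--\v{S}\v{t}ov\'{\i}\v{c}ek obtain it from an explicit description of $\alpha(\calT)$ in terms of the support $\tau$-tilting pair for $\calT$ and its Bongartz completion; your outline does not supply any substitute for this step.

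By contrast, the step you flag as the ``principal obstacle'' --- the identification $\alpha(\calT)=\filt\{M_1,\dots,M_n\}$ --- is easy and needs none of the machinery you propose. Since $\beta(\filt\{M_i\}) = \filt\Gen(\{M_i\}) = \Join_i \calT_{M_i} = \calT$, Proposition~\ref{wide 1} gives $\alpha(\calT)=\alpha\beta(\filt\{M_i\})=\filt\{M_i\}$ in one line; this is exactly how the paper argues in Corollary~\ref{CJC and simple objects}. So your effort is inverted: the part you treat as hard is immediate from $\alpha\beta=\mathrm{id}$, and the part you wave through (``automatically functorially finite'') is the substantive claim that requires the $\tau$-tilting input.
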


For an arbitrary artin algebra $\Lambda$, there are injections (but not inclusions) of the following sets:
$$
\ftors\Lambda
\mathrel{\mathop{\hookrightarrow}^{\alpha}} 
\fwide\Lambda\hookrightarrow\wide\Lambda\mathrel{\mathop{\hookrightarrow}^{\beta}}  \tors\Lambda.
$$
There are situations where all these can be proper injections (see \cite[3.4, 4.7]{MS}).
We further study these injections by restricting to $\tau$-tilting finite algebras and to hereditary algebras.

\begin{corollary}
Let $\Lambda$ be a $\tau$-tilting finite algebra.
Then the map $\alpha: \ftors \Lambda \to \fwide\Lambda$ is a bijection.
\end{corollary}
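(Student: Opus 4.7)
The plan is to deduce this corollary as an immediate specialization of the Marks--Štovíček bijection recalled in Proposition~\ref{ftor=fwide}. That proposition already gives a bijective correspondence between $\ftors\Lambda$ and the subset of $\fwide\Lambda$ consisting of those functorially finite wide subcategories $\calW$ for which $\beta(\calW)$ is additionally functorially finite. So all that remains is to argue that under the $\tau$-tilting finiteness hypothesis, the extra condition ``$\beta(\calW)$ is functorially finite'' is automatic, so that the codomain in Proposition~\ref{ftor=fwide} is literally all of $\fwide\Lambda$.

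First I would recall the characterization of $\tau$-tilting finiteness in terms of torsion classes: by \cite{DIJ} (and as used implicitly when the authors cite \cite{IRTT} in the introduction to assert finiteness of $\tors\Lambda$), $\Lambda$ is $\tau$-tilting finite if and only if $\tors\Lambda$ is finite, in which case every torsion class is functorially finite, i.e. $\tors\Lambda = \ftors\Lambda$. Next, given any wide subcategory $\calW\in\wide\Lambda$, the image $\beta(\calW) = \filt(\Gen(\calW))$ is by definition a torsion class. Combining these two observations, for a $\tau$-tilting finite algebra $\beta(\calW)$ is automatically functorially finite for every $\calW\in\wide\Lambda$, and in particular for every $\calW\in\fwide\Lambda$.

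Therefore the set described in clause~(2) of Proposition~\ref{ftor=fwide} coincides with $\fwide\Lambda$, and the bijection provided by that proposition is exactly the map $\alpha:\ftors\Lambda\to\fwide\Lambda$. This gives the corollary.

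There is no real obstacle here; the entire content is reducing to Proposition~\ref{ftor=fwide} once one invokes the standard fact that $\tau$-tilting finiteness forces every torsion class to be functorially finite. If one wanted to avoid citing \cite{DIJ} directly, the alternative would be to note that a finite lattice of torsion classes necessarily consists only of functorially finite torsion classes (since in the finite case every torsion class is of the form $\fac T$ for a $\tau$-tilting pair via \cite{AIR}), but this is the same statement in different language.
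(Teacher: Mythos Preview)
Your argument is correct: you invoke Proposition~\ref{ftor=fwide} and then observe that, since $\tau$-tilting finiteness forces $\tors\Lambda=\ftors\Lambda$, the side condition ``$\beta(\calW)$ functorially finite'' in clause~(2) is vacuous, so the target of the bijection is exactly $\fwide\Lambda$.

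The paper takes a slightly different route. Rather than unpacking the codomain in Proposition~\ref{ftor=fwide}, it uses the chain of injections
\[
\ftors\Lambda \xhookrightarrow{\alpha} \fwide\Lambda \hookrightarrow \wide\Lambda \xhookrightarrow{\beta} \tors\Lambda
\]
displayed just before the corollary. Since $\ftors\Lambda=\tors\Lambda$ is finite, the composite is an injection from a finite set to itself, hence a bijection; a pigeonhole argument then forces every map in the chain to be a bijection, in particular $\alpha$. Both proofs hinge on the same ingredient (that $\tau$-tilting finiteness makes every torsion class functorially finite), but your version checks the Marks--\v{S}\v{t}ov\'{\i}\v{c}ek hypothesis directly, while the paper's version leverages finiteness and cardinality to avoid looking inside the proposition at all. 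The paper's argument gives a little more for free (namely that $\fwide\Lambda=\wide\Lambda$ and $\beta$ is also bijective), whereas yours is more self-contained and does not require setting up the full chain of injections.
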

\begin{proof}
When $\Lambda$ is a $\tau$-tilting finite algebra, $\ftors\Lambda=\tors\Lambda$ is a finite set. So the composition of three injections above is an injection of a finite set to itself. Hence $\alpha$ is bijective.
\end{proof}
\noindent The next Theorem, due to Ingalls-Thomas, will be useful in our proof of Theorem~\ref{main}.

\begin{theorem}\cite[Corollary 2.17]{IT}\label{IT bij}
Let $H$ be a hereditary artin algebra.
Then  $\alpha: \ftors H \to \fwide H$ is a bijection.
\end{theorem}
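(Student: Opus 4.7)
The plan is to combine Proposition~\ref{ftor=fwide} with a structural feature special to hereditary algebras. By that proposition, $\alpha$ already restricts to a bijection between $\ftors H$ and the subset of $\fwide H$ consisting of those wide subcategories $W$ for which $\beta(W) = \filt(\Gen(W))$ is functorially finite. So the content of the theorem reduces to the following single claim: when $H$ is hereditary, this second condition is automatic, i.e.\ every $W \in \fwide H$ satisfies $\beta(W) \in \ftors H$.

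To establish this, I would invoke the Geigle--Lenzing perpendicular-category construction in the hereditary setting. For hereditary $H$, any wide subcategory $W \subseteq \module H$ is an abelian length category equivalent to $\module H'$ for some hereditary artin algebra $H'$, with the simples of $H'$ corresponding to the relative simple objects of $W$. When $W$ is functorially finite one checks that it has only finitely many relative simples, so $H'$ has finite rank. In particular there exists a tilting $H'$-module, which via the embedding $W \hookrightarrow \module H$ I view as an $H$-module $T$.

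Next I would verify that $\beta(W) = \Gen(T)$ inside $\module H$. Internally in $W$, tilting theory gives $\filt(\Gen(T)) = W$, where $\Gen$ and $\filt$ are computed relative to $W$. Because $H$ is hereditary, $\Ext^1_H$ restricts to $\Ext^1_W$ on $W$, so $T$ remains rigid as an $H$-module; and $\Gen(T)$ (computed in $\module H$) is already closed under extensions since any extension of quotients of $T^n$ by quotients of $T^m$ is again a quotient of $T^{n+m}$, using projective dimension $\leq 1$. Combining these facts, $\filt(\Gen(W)) = \Gen(T)$ as subcategories of $\module H$. Finally, since $T$ is rigid over hereditary $H$ it is $\tau$-rigid, so $\Gen(T)$ is functorially finite by Auslander--Smal{\o} / Adachi--Iyama--Reiten.

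The main obstacle is the identification $\beta(W) = \Gen(T)$, i.e.\ controlling how the internal tilting theory of $W$ interacts with the ambient category $\module H$. Once the structural result $W \simeq \module H'$ with $H'$ hereditary is available and hereditariness is used to transport Ext-vanishing and extension-closure from $W$ back to $\module H$, the remaining steps are standard applications of tilting and $\tau$-tilting theory over hereditary algebras, and surjectivity of $\alpha$ follows at once.
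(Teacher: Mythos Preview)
The paper does not give its own proof of this statement; it is quoted from \cite{IT}, and the Remark following it only explains why the ``finitely generated'' hypotheses in \cite{IT} coincide with ``functorially finite'' in the hereditary case. That Remark in fact contains the short argument your reduction needs: if $\calW\in\fwide H$ and $H\to M$ is a minimal left $\calW$-approximation, then every object of $\calW$ is a quotient of some $M^n$, so $\Gen(\calW)=\Gen(M)$; and over a hereditary algebra $\Gen(\calW)$ is already extension-closed for $\calW$ wide (this is \cite[2.13]{IT}), so $\beta(\calW)=\Gen(M)$ is a finitely generated, hence functorially finite, torsion class. Combined with Proposition~\ref{ftor=fwide} this is exactly your plan, carried out with less machinery.

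Your more elaborate route through $\calW\simeq\module H'$ has a real gap. You choose an \emph{arbitrary} tilting $H'$-module $T$ and assert that internally $\filt(\Gen(T))=\calW$; but the tilting torsion class $\Gen_{H'}(T)=T^{\perp_1}$ is in general a proper subcategory of $\module H'$ (already for $H'=kA_2$ with $T$ the non-projective tilting module). Since epimorphisms in $\calW$ and in $\module H$ agree on objects of $\calW$, one has $\Gen_H(T)\cap\calW=\Gen_{H'}(T)\subsetneq\calW$, so $\calW\not\subseteq\Gen_H(T)$ and your identification $\beta(\calW)=\Gen_H(T)$ fails. The easy fix is to take $T$ to be the projective generator $H'$ itself: then $\calW=\Gen_{H'}(T)\subseteq\Gen_H(T)$, and $T$ is rigid in $\module H$ because $\Ext^1_H$ and $\Ext^1_{\calW}$ agree on the extension-closed subcategory $\calW$, after which your $\tau$-rigidity argument goes through.
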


\begin{remark}
\normalfont
The result in \cite{IT} stated as  $\alpha$ is a bijection between \emph{finitely generated} torsion classes and  \emph{finitely generated}  wide subcategories. We need to justify the equivalence of terminologies here.

 Recall that a torsion class $\calT$ is called \emph{finitely generated} if $\calT=\Gen(M)$ for some finitely generated module $M$. A wide subcategory $\calW$ is called \emph{finitely generated} if it is finitely generated as an abelian category. 
Finitely generated wide subcategories are also called \emph{exceptional subcategories}  in \cite[\S 2.2.2]{R}. 
 
 It is well known that a torsion class is finitely generated if and only if it is functorially finite. We claim that for a hereditary algebra $H$, finitely generated wide subcategories are exactly functorially finite wide subcategories.
In fact, if $\calW\in\wide H$ is finitely generated, then $\beta(\calW)=\Gen\calW$ \cite[2.13]{IT} is a finitely generated torsion class. Hence by Proposition \ref{ftor=fwide}, $W=\alpha\beta(W)$ is functorially finite. Conversely, if $\calW\in\fwide H$,  then $\calW$ is generated by the minimal left $\calW$-approximation of the projective generator $H$. Hence it is finitely generated. 
  \end{remark}

The following result is due to Ringel: 
\begin{prop}\cite{R2, R4} \label{wide 3}
Let $\{M_i\}_{i\in I}$ be a set of $\Hom$-orthogonal bricks. Then $\filt(\{M_i\}_{i\in I})$ is a wide subcategory of $\module\Lambda$, where the $M_i$'s are the simple objects of $\filt(\{M_i\}_{i\in I})$.

Conversely, if $\calW$ is a wide subcategory in $\module\Lambda$ then the set of simple objects $\{M_i\}_{i\in I}$ in $\calW$ are $\Hom$-orthogonal bricks and $\calW= \filt(\{M_i\})$.
\end{prop}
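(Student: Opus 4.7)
The plan is to prove the two directions separately; the forward direction, stating that $\filt(\{M_i\}_{i\in I})$ is wide with simple objects the $M_i$, is the substantive half, while the converse is essentially an application of Schur's lemma inside the abelian category $\calW$.

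For the forward direction, I would first record a Schur-type lemma: any morphism $M_i \to M_j$ is zero when $i \neq j$ (by Hom-orthogonality), and any endomorphism of $M_i$ is either zero or an isomorphism, since every nonzero element of the division ring $\End_\Lambda(M_i)$ is a unit. Closure of $\filt(\{M_i\})$ under extensions is built into the definition, so the task reduces to closure under kernels and cokernels. I would establish this by induction on the length of a filtration of $X$ by the $M_i$'s, for morphisms $f : X \to Y$ with $X,Y \in \filt(\{M_j\})$. In the base case $X = M_i$, fix a filtration $0 = Y_0 \subset Y_1 \subset \cdots \subset Y_n = Y$ and let $k$ be minimal such that $f$ factors through $Y_k$; the Schur-type lemma forces the induced map $M_i \to Y_k/Y_{k-1}$ to be an isomorphism, which splits the short exact sequence $0 \to Y_{k-1} \to Y_k \to Y_k/Y_{k-1} \to 0$ and realizes $f$ as an embedding $M_i \hookrightarrow Y$ with $\ker(f)=0$, while $\cok(f)$ sits in a short exact sequence with $Y_{k-1}$ and $Y/Y_k$, both in $\filt(\{M_j\})$. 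The inductive step applies the snake lemma to a short exact sequence $0 \to X' \to X \to M_{i_n} \to 0$ coming from the top layer of a filtration of $X$, splicing the kernels and cokernels produced by the inductive hypothesis on $f|_{X'}$ with those coming from the induced map on $M_{i_n}$ via the base case.

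For the converse, let $\calW$ be a wide subcategory. Every object of $\calW$ has finite length as a $\Lambda$-module, and since kernels in $\calW$ agree with kernels in $\module\Lambda$, every object of $\calW$ has finite length in $\calW$ as well. Hence $\calW$ has simple objects, say $\{M_i\}_{i\in I}$. By Schur's lemma inside $\calW$, the endomorphism ring of each $M_i$ is a division ring and morphisms between non-isomorphic $M_i$ vanish, so the $M_i$ form a Hom-orthogonal family of bricks. Iterating composition series in $\calW$ gives $\calW \subseteq \filt(\{M_i\})$, and the reverse inclusion is immediate from closure of $\calW$ under extensions.

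The main obstacle is the base case of the forward induction: one must leverage both the brick property and Hom-orthogonality to produce the splitting that realizes the image of $f : M_i \to Y$ as a direct summand of some layer in the filtration of $Y$. Once that splitting is in hand, the cokernel computation in the base case and the diagram chase in the inductive step are routine.
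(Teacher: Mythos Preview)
The paper does not prove this proposition; it is quoted from Ringel (citations \cite{R2,R4}) and used as a black box. There is therefore no ``paper's own proof'' to compare against.

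Your argument is correct. In the base case, minimality of $k$ forces the composite $M_i \to Y_k \to Y_k/Y_{k-1}$ to be nonzero, and then Hom-orthogonality together with the brick condition make it an isomorphism, yielding the splitting you need; the cokernel then sits in an extension of $Y/Y_k$ by $Y_{k-1}$. In the inductive step, the point that makes the snake-lemma splice work is that the induced map on the top factor $M_{i_n}$ lands in $\cok(f|_{X'})$, and the latter lies in $\filt(\{M_j\})$ by the inductive hypothesis applied to $f|_{X'}$, so the base case applies to this induced map. The converse is, as you say, Schur's lemma inside the length abelian category $\calW$.

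One small addendum worth making explicit: you should also record that the $M_i$ are \emph{all} of the simple objects of $\filt(\{M_i\})$. That each $M_i$ is simple follows from your base case (any nonzero map $M_j \to M_i$ is an isomorphism, so a proper nonzero subobject in $\filt(\{M_j\})$ would contradict this), and that there are no other simples follows because every object of $\filt(\{M_i\})$ already has a filtration with subquotients among the $M_i$, hence length at least one with some $M_i$ as a composition factor.
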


Recall from Theorem~\ref{torsion_cjr} that each canonical join representation $\CJR(\calT) = \Join_{i\in I} \{\calT_{M_i}\}$ is also determined by a collection $\{M_i\}_{i\in I}$ of hom-orthogonal bricks.
We make this connection explicit below.

\begin{corollary}\label{CJC and simple objects}
Let $\Lambda$ be a finite dimensional algebra and let $\calT$ be a torsion class in $\tors \Lambda$ with canonical join representation $\CJR(\calT) = \Join_{i\in I} \{\calT_{M_i}\}$.
Then the set of bricks $\{M_i\}_{i\in I}$ is the set of simple objects in $\alpha(\calT)$.
\end{corollary}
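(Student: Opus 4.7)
The plan is to show that $\alpha(\calT)$ coincides with the wide subcategory $\filt(\{M_i\}_{i\in I})$, after which Ringel's Proposition~\ref{wide 3} immediately identifies the set of simple objects.

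First, I would invoke Theorem~\ref{torsion_cjr} to conclude that, since $\Join_{i\in I}\calT_{M_i}$ is a canonical join representation, the bricks $\{M_i\}_{i\in I}$ are pairwise $\Hom$-orthogonal. By Proposition~\ref{wide 3} this guarantees that $\calW := \filt(\{M_i\}_{i\in I})$ is a wide subcategory of $\module \Lambda$ whose simple objects are precisely the $M_i$. So the corollary will follow once we show $\alpha(\calT)=\calW$.

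Next, I would apply $\beta$ to $\calW$ and unwind definitions to show $\beta(\calW)=\calT$. Since $\beta(\calW)=\filt(\Gen(\filt(\{M_i\})))$, it suffices to check that
\[
\filt(\Gen(\filt(\{M_i\}))) \;=\; \filt(\Gen(\{M_i\})).
\]
The inclusion $\supseteq$ is immediate. For $\subseteq$, note that $\filt(\Gen(\{M_i\}))$ is a torsion class containing each $M_i$; being extension-closed it contains $\filt(\{M_i\})$, and being closed under quotients it contains $\Gen(\filt(\{M_i\}))$, which yields the desired inclusion. Therefore
\[
\beta(\calW) \;=\; \filt(\Gen(\{M_i\})) \;=\; \bigvee_{i\in I}\filt(\Gen(M_i)) \;=\; \bigvee_{i\in I} \calT_{M_i} \;=\; \calT.
\]

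Finally, Proposition~\ref{wide 1} gives $\alpha\beta(\calW)=\calW$ for every wide subcategory, so $\alpha(\calT)=\alpha\beta(\calW)=\calW=\filt(\{M_i\}_{i\in I})$. Combining with the first paragraph, the simple objects of $\alpha(\calT)$ are exactly the bricks $\{M_i\}_{i\in I}$.

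The only delicate point I anticipate is the set-theoretic juggling in the identity $\filt(\Gen(\filt(\{M_i\})))=\filt(\Gen(\{M_i\}))$ when $I$ is infinite; but as sketched above this reduces to the observation that $\filt(\Gen(\{M_i\}))$ is already a torsion class, so no genuine obstruction arises. Everything else is a direct application of results already established in the excerpt.
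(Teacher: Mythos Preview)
Your argument is correct and follows essentially the same route as the paper: define $\calW=\filt(\{M_i\})$, compute $\beta(\calW)=\calT$, and apply $\alpha\beta=\mathrm{id}$ from Proposition~\ref{wide 1} together with Proposition~\ref{wide 3}. You are in fact slightly more careful than the paper in explicitly invoking Theorem~\ref{torsion_cjr} for Hom-orthogonality and in justifying the identity $\filt(\Gen(\filt(\{M_i\})))=\filt(\Gen(\{M_i\}))$.
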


\begin{proof}
Proposition~\ref{wide 3} says that $\calW = \filt(\{M_i\}_{i\in I})$  is a wide subcategory with $\{M_i\}_{i\in I}$ the set of simple objects. 
Observe that 
\begin{align*}
\beta(\calW) =& \filt( \Gen(M_i))_{i\in I} \\[.5em]
=& \Join_{i\in I}  \filt(\Gen( M_i))\\
=& \Join_{i\in I} \calT_{M_i}\\[.5em]
=&\calT
\end{align*}
By Proposition~\ref{wide 1}, $\alpha(\calT) = \calW$.
The statement follows. 
\end{proof}

\subsection{Simple Objects in Wide Subcategories}

We now describe the simple objects of wide subcategories in terms of minimal extending modules.
This proposition is key to our proof of Proposition~\ref{wide many perp} and Theorem~\ref{main}.
\begin{prop}\label{simple=coext}
Let $\Lambda$ be an artin algebra and $\calT\in\tors\Lambda$. Let $\alpha(\calT)$ be the corresponding wide subcategory and $\calF=\calT^\perp$ the torsion-free class. Then $M$ is a simple object in $\alpha(\calT)$ if and only if $M$ is a minimal co-extending module of $\calF$.
\end{prop}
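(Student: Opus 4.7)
The plan is to prove each direction directly from the definitions of $\alpha(\calT)$ and of minimal co-extending module, using the torsion-pair decomposition $0\to tX\to X\to X/tX\to 0$ as the main technical tool. Before starting, I would record an \emph{image lemma}: if $M\in\alpha(\calT)$ and $g\colon Y\to M$ is any map with $Y\in\calT$, then $\image(g)$ also lies in $\alpha(\calT)$. Indeed, $\image(g)\in\calT$ as a quotient of $Y$, and for any $h\colon Z\to \image(g)$ with $Z\in\calT$, composing with the inclusion $\image(g)\hookrightarrow M$ yields a map $Z\to M$ whose kernel equals $\ker h$; that kernel lies in $\calT$ because $M\in\alpha(\calT)$. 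Two standing facts will be used throughout: $\Hom(\calT,\calF)=0$ (which gives (P3') for free in the forward direction, since $M\in\alpha(\calT)\subseteq\calT$), and $\calT\cap\calF=0$ (which will convert ``member of both $\calT$ and $\calF$'' into ``zero'' several times).

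For $(\Leftarrow)$, assume $M$ is minimal co-extending for $\calF$; by Remark~\ref{min_coext_rmk}, $M\in\calT$. To verify $M\in\alpha(\calT)$, take $g\colon Y\to M$ with $Y\in\calT$, set $K=\ker g$, and let $F:=K/tK\in\calF$. If $g$ is not surjective, then $\image(g)\subsetneq M$, so $\image(g)\in\calF$ by (P1'); being also a quotient of $Y\in\calT$, we get $\image(g)\in\calT\cap\calF=0$, whence $g=0$ and $K=Y\in\calT$. If $g$ is surjective, then $Y/tK$ sits in an exact sequence $0\to F\to Y/tK\to M\to 0$ with $F\in\calF$ and $Y/tK\in\calT$; since $M\ne 0$, (P2') forces this sequence to split (otherwise (P2') would give $Y/tK\in\calF\cap\calT=0$, contradicting the surjection onto $M$). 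Then $F$ is a summand of $Y/tK\in\calT$, so $F\in\calT\cap\calF=0$, and $K=tK\in\calT$. In either case $\ker g\in\calT$, so $M\in\alpha(\calT)$. Simplicity is then immediate: any proper subobject $X\subsetneq M$ of $M$ in $\alpha(\calT)$ lies in $\calT$ (since $\alpha(\calT)\subseteq\calT$) and in $\calF$ (by (P1')), hence $X=0$.

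For $(\Rightarrow)$, assume $M$ is simple in $\alpha(\calT)$; it is a brick by Proposition~\ref{wide 3}, and (P3') is immediate. For (P1'), given $N\subsetneq M$ with torsion submodule $tN\in\calT$, if $tN\ne 0$ the image lemma applied to the inclusion $tN\hookrightarrow M$ places $tN$ in $\alpha(\calT)$ as a nonzero proper subobject of the simple object $M$, a contradiction; hence $tN=0$ and $N\in\calF$. For (P2'), given a non-split extension $0\to F\to X\xrightarrow{p} M\to 0$ with $F\in\calF$, I suppose for contradiction that some $g\colon T\to X$ with $T\in\calT$ is nonzero. If $p\circ g=0$ then $g$ factors through $F$ and vanishes since $\Hom(\calT,\calF)=0$; so $p\circ g\ne 0$. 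By the image lemma and simplicity, $p\circ g$ must be surjective, and then $K:=\ker(p\circ g)\in\calT$ by the defining property of $\alpha(\calT)$. The restriction $g|_K\colon K\to F$ vanishes since $\Hom(\calT,\calF)=0$, so $\ker g=K$ and $g$ induces a monomorphism $T/K\cong M\hookrightarrow X$ on which $p$ restricts to an isomorphism, splitting the extension. This contradicts the non-split hypothesis, so $\Hom(\calT,X)=0$ and $X\in\calF$.

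The main obstacle is the verification of (P2') in the $(\Rightarrow)$ direction: one has to manufacture a splitting of the given extension out of a single hypothetical morphism $T\to X$, and the argument hinges on combining simplicity of $M$ in $\alpha(\calT)$ with the image lemma (to force $p\circ g$ surjective) and on the orthogonality $\Hom(\calT,\calF)=0$ (to collapse $\ker g$ onto $\ker(p\circ g)$). The remaining steps reduce to routine bookkeeping with the torsion decomposition and the identity $\calT\cap\calF=0$.
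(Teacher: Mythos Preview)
Your proof is correct and parallels the paper's closely: both directions verify the relevant conditions directly via the torsion decomposition and the identities $\calT\cap\calF=0$, $\Hom(\calT,\calF)=0$. Your explicit ``image lemma'' is exactly what the paper uses implicitly when checking (P1'), and your $(\Leftarrow)$ argument is the paper's pushout argument rephrased (the pushout of $K\hookrightarrow Y$ along $K\twoheadrightarrow K/tK$ is precisely $Y/tK$). The one genuine variation is in verifying (P2') for the $(\Rightarrow)$ direction: the paper takes the torsion submodule $tE\hookrightarrow E$, forms a pull-back diagram, and does a case split on whether the induced map $N\hookrightarrow M$ is proper; you instead take an arbitrary nonzero map $g\colon T\to X$ with $T\in\calT$ and show directly that it produces a section of $p$. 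Your route is slightly more streamlined and avoids the diagram chase, but both reach the same contradiction (the extension splits).
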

\begin{proof} ``if part'': Let $M$ be a minimal co-extending module of $\calF$. Then since $\Hom(M,\calF)=0$, we have $M\in\calT$. Let $f:T\to M$ be a homomorphism in $\calT$. We want to show that $K=\ker f\in\calT$. In fact, if $f$ is not an epimorphism then $\image f$ is a proper submodule of $M$. Hence $\image f\in\calF\cap\calT=0$ which implies $\ker f\cong T\in\calT$. Now assume $f$ is an epimorphism.  Take the canonical sequence  $0\to tK\to K\to K/tK\to 0$ of $K$ with respect to the torsion pair $(\calT,\calF)$. Then in the push-out diagram we have:
$$
\begin{tikzcd}
0 \arrow[r] & K \arrow[r, "i"] \arrow[d, "p\quad (p.o.)"]& T \arrow[r, "f"]\arrow[d, "h"] & M\arrow[r]\arrow[d, equal]& 0 \\
0 \arrow[r] & K/tK \arrow[r] & E \arrow[r] & M \arrow[r]& 0
\end{tikzcd}
$$
If the lower exact sequence is not split, then $E\in\calF$ by Property~\ref{propertytwoprime} of minimal co-extending modules.
Then $h=0$ and hence $p=0$. 
If the lower exact sequence split, then $p$ factors through $i$ and therefore $p=0$ again.
So in both cases $K\cong tK\in\calT$. Hence $M\in\alpha(\calT)$.
All that remains is to show that $M$ is simple in $\alpha(\calT)$.
Let $M'\in\alpha(\calT)$ be a proper submodule of $M$.
 Then $M'\in\calF\cap\calT=0$. 
Therefore $M$ is simple in $\alpha(\calT)$. 

``only if part'': Let $M$ be a simple object in $\alpha(\calT)$. First, since $M\in\calT$, $\Hom(M,\calF)=0$. This is Property~\ref{propertythreeprime}.
Second, let $M'$ be a proper submodule of $M$.
Then the torsion part $tM'$ is also a proper submodule of $M$.
Therefore, for any $g:X\to tM'$ in $\calT$, we have $\ker(g)=\ker(i\circ g)$ is in $\calT$, where $i:tM' \hookrightarrow M$.
 It follows that $tM'\in\alpha(\calT)$.
 Since $M$ is simple, $tM'=0$.
 Therefore $M'\in\calF$.
 This is Property~\ref{propertyoneprime}
 Last let $0\to F\to E\to M\to 0$ be a non-split exact sequence with $F\in\calF$.
We need to show $E\in\calF$.
Take the canonical sequence  $0\to tE\to E\to E/tE\to 0$ of $E$ with respect to the torsion pair $(\calT,\calF)$. In the pull-back diagram:
$$
\begin{tikzcd}
0 \arrow[r] & L \arrow[r] \arrow[d, hook, " \quad (p.b.)"]& tE \arrow[r]\arrow[d, hook] & N\arrow[r]\arrow[d,hook, "\iota"]& 0 \\
0 \arrow[r] & F \arrow[r] & E \arrow[r] & M \arrow[r]& 0
\end{tikzcd}
$$
Notice that $L\in\calF$.
 If $\iota$ is a proper monomorphism, then we have shown that $N\in\calF$. Hence $tE\in\calF\cap\calT=0$ and $E\cong E/tE\in\calF$ which finishes the proof. If $\iota$ is an isomorphism, then since $M\in\alpha(\calT)$ we get $L\in\calT\cap\calF=0$. Hence $tE\cong N\cong M$. But then the commutative diagram implies that the lower exact sequence is split, a contradiction. 
 Therefore $M$ satisfies Property~\ref{propertytwoprime}.
 We conclude that $M$ is a minimal coextending module of $\calF$.
\end{proof}

\subsection{Ringel's $\epsilon$-map}\label{etamap}

In \cite{R}, Ringel studied the perpendicular categories of wide subcategories over a hereditary algebra and its combinatorial connection with the Kreweras complement from Remark~\ref{Kreweras}. In this section, we will show that our map $\bar\kappa$ (defined on torsion classes) is the counterpart of taking perpendicular categories of wide subcategories under the Ingalls-Thomas bijection (see Theorem \ref{k and d}). 

Let $\calW$ be a wide subcategory of $\module H$ for a hereditary algebra $H$. Then it is easy to see  that the subcategory $\leftidx{^{\perp_{0,1}}}\calW=\{X\in\module H | \Ext^i_H(X,M)=0, \forall M\in\calW, i=0,1\}$ is also a wide subcategory. Similarly the subcategory $\calW^{\perp_{0,1}}$ is again a wide subcategory.

\begin{definition}\label{delta def}
Let $H$ be a hereditary artin algebra and $\calW$ be a wide subcategory in $\module H$. Define  $\delta(\calW)=\calW^{\perp_{0,1}}$ and $\epsilon(\calW)=\leftidx{^{\perp_{0,1}}}\calW$ .
\end{definition}

 \begin{definition}
Let $H$ be a hereditary artin algebra and $\calU$, $\calV$  subcategories in $\module H$. Then $(\calU,\calV)$ is called a perpendicular pair provided $\calU=\leftidx{^{\perp_{0,1}}}\calV$ and  $\calV= \calU^{\perp_{0,1}}$. 
\end{definition}

\begin{lemma}\label{perp pair}
Let $H$ be a hereditary artin algebra and $(\calU,\calV)$ a perpendicular pair.
\begin{itemize}
\item \cite{R} Then $\calU$ is a functorially finite wide subcategory if and only if $\calV$ is a functorially finite wide subcategory.
\item Therefore $\epsilon$ is a bijection from $\fwide H$ to $\fwide H$.
\end{itemize}
\end{lemma}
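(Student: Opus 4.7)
The plan is to show that $\epsilon$ and $\delta$ from Definition~\ref{delta def} are mutually inverse bijections on $\fwide H$, with essentially all the work done by the first bullet of the lemma (Ringel's result) once we assemble perpendicular pairs correctly.

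First I would verify that every $\calW \in \fwide H$ fits into a perpendicular pair in two canonical ways. Namely, I claim that $(\epsilon(\calW), \calW) = (\leftidx{^{\perp_{0,1}}}\calW, \calW)$ and $(\calW, \delta(\calW)) = (\calW, \calW^{\perp_{0,1}})$ are perpendicular pairs. The ``half'' of each identity that is true by definition is trivial; the non-trivial halves are the double-perpendicular identities
\[
\calW = (\leftidx{^{\perp_{0,1}}}\calW)^{\perp_{0,1}} = \delta\epsilon(\calW), \qquad \calW = \leftidx{^{\perp_{0,1}}}(\calW^{\perp_{0,1}}) = \epsilon\delta(\calW),
\]
which are the classical Geigle--Lenzing/Schofield double-perpendicular theorem for functorially finite (equivalently, finitely generated or exceptional) wide subcategories over a hereditary algebra.

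Next I would apply the first bullet of the lemma to these perpendicular pairs. From $(\epsilon(\calW),\calW)$ being a perpendicular pair with $\calW \in \fwide H$, Ringel's statement yields $\epsilon(\calW) \in \fwide H$, so $\epsilon$ restricts to a map $\fwide H \to \fwide H$; the symmetric argument does the same for $\delta$. Combined with the two identities displayed above, this gives $\delta \circ \epsilon = \mathrm{id}_{\fwide H}$ and $\epsilon \circ \delta = \mathrm{id}_{\fwide H}$, so $\epsilon$ is a bijection with inverse $\delta$.

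The main obstacle is really just the first step, namely the double-perpendicular identities; everything after that is formal. These identities are not purely category-theoretic — they use both hereditariness (so that $\Ext^i$ vanishes for $i \geq 2$ and $\perp_{0,1}$ behaves well) and functorial finiteness of $\calW$ (to guarantee that enough approximations exist to reconstruct $\calW$ from $\leftidx{^{\perp_{0,1}}}\calW$). So the proof reduces to a citation of the appropriate hereditary perpendicular-category theorem; once invoked, bijectivity of $\epsilon$ drops out immediately.
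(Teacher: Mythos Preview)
Your proposal is correct and is precisely the natural way to unpack the paper's ``therefore''; the paper itself gives no proof beyond citing Ringel for the first bullet and asserting the second as a consequence. You correctly identify that the only substantive input beyond the first bullet is the double-perpendicular identity $\calW = (\leftidx{^{\perp_{0,1}}}\calW)^{\perp_{0,1}} = \leftidx{^{\perp_{0,1}}}(\calW^{\perp_{0,1}})$ for $\calW \in \fwide H$, which is part of the Geigle--Lenzing/Schofield/Ringel theory of perpendicular categories over hereditary algebras and is implicitly subsumed in the citation to \cite{R}.
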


In the following, we will heavily use the Auslander-Reiten formulas in the special case when $H$ is a hereditary algebra \cite[Proposition 3.4]{A}\cite[IV 2.13, 2.14]{ASS}. 

\begin{theorem}[Auslander-Reiten formulas]\label{AR_formulas}
Let $H$ be hereditary.
 Then there exist isomorphisms that are functorial in both variables:
$$
\Ext^1_H(M,N)\cong D {\Hom_H(\tau^{-1}N,M)}\cong D {\Hom_H(N,\tau M)}.
$$
\end{theorem}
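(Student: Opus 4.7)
The plan is to exploit two features of hereditary algebras: every module has projective dimension at most $1$, and the AR translate admits the closed form $\tau M \cong D\Ext^1_H(M,H)$ whenever $M$ has no projective summands. I would establish the first isomorphism $\Ext^1_H(M,N) \cong D\Hom_H(N,\tau M)$; the second is obtained by a dual argument starting from a minimal injective coresolution of $N$ and using $\tau^{-1}$.

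First, I would reduce to the case where $M$ has no projective summands, since $\tau$ annihilates projectives and $\Ext^1_H(P,-)=0$ for projective $P$. Fix a minimal projective resolution $0 \to P_1 \xrightarrow{d} P_0 \to M \to 0$, which exists with length one by hereditariness. Applying $\Hom_H(-,H)$ produces
\[
0 \to \Hom_H(M,H) \to \Hom_H(P_0,H) \to \Hom_H(P_1,H) \to \Ext^1_H(M,H) \to 0,
\]
and dualizing the cokernel term recovers $\tau M \cong D\Ext^1_H(M,H)$ directly from the definition of the transpose.

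The key step is to prove the base-change identity $\Ext^1_H(M,N) \cong \Ext^1_H(M,H) \otimes_H N$. I would apply $\Hom_H(-,N)$ to the resolution of $M$ and compare the result with the sequence displayed above tensored over $H$ with $N$; the natural isomorphism $\Hom_H(P,N) \cong \Hom_H(P,H) \otimes_H N$ for finitely generated projective $P$ lets the two cokernels be identified, using right exactness of $-\otimes_H N$. Combining this identity with the tensor-hom adjunction $D(X \otimes_H N) \cong \Hom_H(N, DX)$ gives
\[
D\Ext^1_H(M,N) \cong D\bigl(\Ext^1_H(M,H)\otimes_H N\bigr) \cong \Hom_H(N, D\Ext^1_H(M,H)) \cong \Hom_H(N,\tau M),
\]
and dualizing both sides yields the first AR formula.

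The main obstacle I anticipate is ensuring the resulting isomorphism is genuinely bifunctorial in $M$ and $N$, rather than merely natural in each variable separately; this is a standard but careful adjunction chase that must be tracked through each step above. Handling the projective summands of $M$ (and dually, injective summands of $N$ for the second formula) also requires a small bookkeeping step, but does not affect the core argument and is treated systematically in \cite{ASS}.
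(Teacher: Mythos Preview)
The paper does not prove this theorem at all; it simply states the Auslander--Reiten formulas and cites \cite[Proposition~3.4]{A} and \cite[IV~2.13, 2.14]{ASS} for the proof. Your sketch is a correct outline of the standard argument found in those references: for $H$ hereditary one has $\tau M \cong D\Ext^1_H(M,H)$, the identity $\Ext^1_H(M,N)\cong \Ext^1_H(M,H)\otimes_H N$ follows by comparing cokernels after applying $\Hom_H(-,N)$ and $\Hom_H(-,H)\otimes_H N$ to a length-one projective resolution, and the tensor--hom adjunction finishes the job. The reduction to the case where $M$ has no projective summand and the bifunctoriality check are routine, exactly as you note.
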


As an immediate application we have:

\begin{lemma} \label{hereditary brick}
Suppose that $H$ is a hereditary algebra and $M$ is not injective.
Then $M$ is a brick if and only if $\tau^{-1}M$ is a brick.
\end{lemma}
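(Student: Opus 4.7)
The plan is to reduce the brick condition to an identification of endomorphism rings and then deduce $\End_H(M) \cong \End_H(\tau^{-1}M)$ from the Auslander-Reiten formulas. Recall that $M$ is a brick if and only if $\End_H(M)$ is a division ring; since the property of being a division ring depends only on the ring-theoretic isomorphism type of $\End_H(M)$, it suffices to exhibit an algebra isomorphism $\End_H(M) \cong \End_H(\tau^{-1}M)$. Because $M$ is non-injective, $\tau^{-1}M$ is a well-defined nonzero module, so the statement makes sense.

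First I would apply Theorem~\ref{AR_formulas} in the form $\Ext^1_H(A,B) \cong D\Hom_H(B,\tau A)$ with $A = \tau^{-1}M$ and $B = M$. Since $\tau(\tau^{-1}M) = M$ (as $M$ is non-injective), this gives
\[
\Ext^1_H(\tau^{-1}M, M) \cong D\Hom_H(M,M) = D\End_H(M).
\]
Next I would apply the second form $\Ext^1_H(A,B) \cong D\Hom_H(\tau^{-1}B, A)$ with the same $A,B$, obtaining
\[
\Ext^1_H(\tau^{-1}M, M) \cong D\Hom_H(\tau^{-1}M, \tau^{-1}M) = D\End_H(\tau^{-1}M).
\]
Combining these two isomorphisms and applying $D$ produces a $k$-linear isomorphism $\End_H(M) \cong \End_H(\tau^{-1}M)$.

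The main obstacle is upgrading this $k$-linear isomorphism to a ring isomorphism, since the brick condition is about multiplicative structure, not just dimensions. The key observation is that the Auslander-Reiten formulas are \emph{bifunctorial} in both arguments, so the induced isomorphism $\End_H(M) \cong \End_H(\tau^{-1}M)$ is compatible with composition (up to a possible reversal of multiplication coming from the duality $D$, which is immaterial since a ring is a division ring if and only if its opposite is). Equivalently, one can invoke the fact that for a hereditary algebra $H$, the functor $\tau^{-1}$ restricts to an equivalence between the full subcategories of non-injective and non-projective modules (obtained, e.g., from the Nakayama functor applied to minimal projective presentations), which immediately yields a natural algebra isomorphism on endomorphism rings of indecomposable non-injective objects.

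With the algebra isomorphism $\End_H(M) \cong \End_H(\tau^{-1}M)$ established, the conclusion is immediate: $M$ is a brick $\iff$ $\End_H(M)$ is a division ring $\iff$ $\End_H(\tau^{-1}M)$ is a division ring $\iff$ $\tau^{-1}M$ is a brick. In writing this up I would likely invoke the bifunctoriality of the AR formula as a one-line justification rather than spell out the naturality squares.
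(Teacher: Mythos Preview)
Your approach is essentially identical to the paper's: both apply the two forms of the Auslander--Reiten formula to the pair $(\tau^{-1}M,M)$ and combine them to get $\End_H(M)\cong\End_H(\tau^{-1}M)$. The paper simply writes the chain $\Hom(\tau^{-1}M,\tau^{-1}M)\cong D\Ext^1(\tau^{-1}M,M)\cong DD\Hom(M,\tau\tau^{-1}M)\cong\Hom(M,M)$ and concludes, without discussing the ring-versus-$k$-linear issue you raise; your remark that $\tau^{-1}$ is an equivalence from non-injectives to non-projectives over a hereditary algebra is the cleanest way to close that gap, and is a welcome addition rather than a deviation.
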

\begin{proof}
Using the AR-formulas, we have:
$$
\Hom_{\Lambda}(\tau^{-1}M, \tau^{-1}M)\cong D\Ext_{\Lambda}^1(\tau^{-1}M,M)\cong DD\Hom_{\Lambda}(M,\tau\tau^{-1}M)\cong \Hom_{\Lambda}(M,M).
$$
So $M$ is a brick if and only if so is $\tau^{-1}M$.
\end{proof}

\begin{remark}
\normalfont
Lemma \ref{hereditary brick} does not hold if $\Lambda$ is not hereditary. For example, let $\Lambda$ be the preprojective algebra of the quiver
$$
4\to3\to2\to1.
$$
 One can check that $\tau^{-1}\left(\begin{smallmatrix} 3\\4\end{smallmatrix}\right)=\begin{smallmatrix} &2\\3&&1\\&2\end{smallmatrix}$, where $\begin{smallmatrix} 3\\4\end{smallmatrix}$ is a brick and $\begin{smallmatrix} &2\\3&&1\\&2\end{smallmatrix}$ is not.
\end{remark}

There is another well-known result by Auslander and Smal\o, which is obtained from the AR-formulas.
\begin{lemma} \cite[Proposition 5.6, 5.8]{AS}\label{ARF var}
Let $\Lambda$ be an artin algebra and $M, N$ be $\Lambda$-modules. Then\\
$(a)$ $\Hom_{\Lambda}(\tau^{-1} M, N)=0$ if and only if $\Ext^1_{\Lambda}(\Cogen(N), M)=0$.\\
$(b)$ $\Hom_{\Lambda}(M,\tau N)=0$ if and only if $\Ext^1_{\Lambda}(N, \Gen(M))=0$.
\end{lemma}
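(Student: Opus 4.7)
The plan is to prove part (a) directly; part (b) will then follow by applying (a) to the opposite algebra $\Lambda^{\mathrm{op}}$. Under Matlis duality $D\colon\module\Lambda\to\module\Lambda^{\mathrm{op}}$ one has the intertwining $\tau^{-1}_{\Lambda^{\mathrm{op}}}\circ D=D\circ \tau_\Lambda$ together with $\Cogen_{\Lambda^{\mathrm{op}}}(DM)=D(\Gen_\Lambda M)$, so (a) applied to $DN$ and $DM$ over $\Lambda^{\mathrm{op}}$ translates, via $D$, directly into (b) over $\Lambda$. Before starting the proof of (a), I reduce to the case where $M$ has no injective direct summand: if $M=M'\oplus I$ with $I$ injective, then $\tau^{-1}M=\tau^{-1}M'$ (as $\tau^{-1}I=0$) and $\Ext^1_\Lambda(-,I)=0$, so both conditions in (a) depend only on $M'$. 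Under this reduction, the standard fact that $\tau^{-1}M$ has no projective direct summand will be crucial in the hard direction.

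The central tool is the Auslander--Reiten duality formula for artin algebras,
\[D\Ext^1_\Lambda(X,M)\ \cong\ \underline{\Hom}_\Lambda(\tau^{-1}M, X),\]
where $\underline{\Hom}$ denotes homomorphisms modulo those factoring through a projective. The forward direction is immediate: if $\Hom(\tau^{-1}M,N)=0$, then any $X\in\Cogen(N)$ embeds as $X\hookrightarrow N^k$ for some $k$, inducing an injection $\Hom(\tau^{-1}M,X)\hookrightarrow \Hom(\tau^{-1}M,N)^k=0$. Thus $\underline{\Hom}(\tau^{-1}M,X)=0$ and the AR formula yields $\Ext^1(X,M)=0$.

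The backward direction is where I expect the main obstacle. Assume $\Ext^1(\Cogen(N),M)=0$ and let $f\colon\tau^{-1}M\to N$ be arbitrary; set $X=\image(f)\subseteq N$, so $X\in\Cogen(N)$ and $\Ext^1(X,M)=0$. By the AR formula, $\underline{\Hom}(\tau^{-1}M,X)=0$, which only says that the surjection $g\colon\tau^{-1}M\twoheadrightarrow X$ factors through \emph{some} projective, not that it vanishes. The gap is closed by upgrading this factorization to one through the projective cover $\pi\colon P(X)\twoheadrightarrow X$: projectivity lets any factorization $\tau^{-1}M\to P\to X$ descend to $g=\pi\circ a'$ for some $a'\colon\tau^{-1}M\to P(X)$. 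Since $\pi$ is an essential epimorphism and $g$ is surjective, Nakayama's lemma forces $a'$ to be surjective; since $P(X)$ is projective, this surjection splits, exhibiting $P(X)$ as a direct summand of $\tau^{-1}M$. But $\tau^{-1}M$ has no projective summand by the reduction, so $P(X)=0$, hence $X=0$, and therefore $f=0$, as desired.
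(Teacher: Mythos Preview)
The paper does not prove this lemma; it simply cites it from Auslander and Smal\o\ \cite[Proposition~5.6,~5.8]{AS} as a known result. So there is no proof in the paper to compare your proposal against.

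That said, your argument is correct and is essentially the standard one. The reduction to $M$ without injective summands is needed to ensure both that $\tau\tau^{-1}M\cong M$ (so that the AR formula $D\Ext^1(X,M)\cong\underline{\Hom}(\tau^{-1}M,X)$ holds) and that $\tau^{-1}M$ has no projective summand; you handle this properly. The forward direction is immediate. In the backward direction, the key step---upgrading ``factors through some projective'' to ``factors through the projective cover'', then using essentiality of $\pi$ to force $a'$ surjective and hence split---is exactly the right mechanism and is carried out correctly. The passage to (b) via $D$ is also fine.
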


As an application, we will use Lemma~\ref{ARF var} to compute $\alpha(\leftidx{^\perp}{M})$ for a brick module $M$. Notice that $\alpha(\leftidx{^\perp}{M})$ is just the corresponding wide subcategory of the torsion class $\kappa(\calT_M)$ under the Ingalls-Thomas bijection from Theorem~\ref{IT bij}. (See Remark \ref{wide perp rmk} and Corollary \ref{wide perp cor} below.)

\begin{lemma}\label{wide perp}
Let $\Lambda$ be a finite dimensional algebra and let $M$ be a brick in $\module \Lambda$.
Then $\alpha(\leftidx{^\perp}{M})=\{X\in \leftidx{^\perp}{M} | \forall X'\in\Cogen X\cap \leftidx{^\perp}{M}, \Ext^1(X', M)=0\}$.
\end{lemma}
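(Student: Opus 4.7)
I would prove the two inclusions separately; both arguments rest on the canonical torsion-pair decomposition for $(\leftidx{^\perp}M, \filt\Cogen M)$ (Lemma~\ref{left perp}) together with a rigidity property of a brick sitting inside iterated Cogen-extensions. For $(\supseteq)$, I would take $X$ in the right-hand side and a morphism $g \colon Y \to X$ with $Y \in \leftidx{^\perp}M$, set $K := \ker g$, and write the canonical sequence $0 \to tK \to K \to F \to 0$ with $tK \in \leftidx{^\perp}M$ and $F \in \filt\Cogen M$. Pushing out $0 \to K \to Y \to \image g \to 0$ along $K \twoheadrightarrow F$ yields $0 \to F \to Y/tK \to \image g \to 0$ with $Y/tK \in \leftidx{^\perp}M$ and $\image g \in \Cogen X \cap \leftidx{^\perp}M$. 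Applying $\Hom(-,M)$, the hypothesis $\Ext^1(\image g, M) = 0$ together with $\Hom(Y/tK, M) = \Hom(\image g, M) = 0$ forces $\Hom(F, M) = 0$. Since any nonzero object of $\filt\Cogen M$ surjects onto a nonzero object of $\Cogen M$, which in turn maps nontrivially into $M$ via coordinate projection from an embedding into $M^k$, this gives $F = 0$ and hence $K = tK \in \leftidx{^\perp}M$.

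For $(\subseteq)$, I would argue by contradiction: assume $X \in \alpha(\leftidx{^\perp}M)$ and $X' \in \Cogen X \cap \leftidx{^\perp}M$ admit a nonsplit extension $\xi \colon 0 \to M \to E \to X' \to 0$. Because $\alpha(\leftidx{^\perp}M)$ is a wide subcategory, $X^n \in \alpha(\leftidx{^\perp}M)$; composing $tE \hookrightarrow E \to X' \hookrightarrow X^n$ produces a morphism with kernel $tE \cap M$, and the defining property of $\alpha$ combined with the fact that $\leftidx{^\perp}M$ admits no nonzero maps to $M$ forces $tE \cap M = 0$. Hence $tE$ embeds into $X'$; were the embedding surjective, $E = M \oplus tE$ and $\xi$ would split, so $X_1 := X'/tE \neq 0$ is a quotient in $\leftidx{^\perp}M$ fitting into an induced nonsplit sequence $\xi_1 \colon 0 \to M \to F_E \to X_1 \to 0$ with $F_E \in \filt\Cogen M$.

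The crux is the following. \emph{Key claim}: if $M$ is a brick and $M \hookrightarrow F$ for some $F \in \filt\Cogen M$, then $F/M \in \filt\Cogen M$. Applied to $F = F_E$, this forces $X_1 \in \filt\Cogen M \cap \leftidx{^\perp}M = 0$, which contradicts $X_1 \neq 0$. I would prove the claim by induction on the Cogen-filtration length $\ell$ of $F$. In the base case $\ell = 1$, the composite $M \hookrightarrow F \hookrightarrow M^k$ has coordinates in the division ring $\End M$; some coordinate is invertible, and a $\Lambda$-linear change of basis of $M^k$ carries the image of $M$ onto the first summand, identifying $M^k/M$ with $M^{k-1} \in \Cogen M$ and hence embedding $F/M \hookrightarrow M^{k-1}$. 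For $\ell \geq 2$ one separates on whether $M \subseteq F_{\ell-1}$ (apply induction to $F_{\ell-1}$ and extend by $F/F_{\ell-1} \in \Cogen M$) or $M \not\subseteq F_{\ell-1}$ (the brick dichotomy applied to the induced $M \to F/F_{\ell-1} \hookrightarrow M^k$ forces $M \cap F_{\ell-1} = 0$, so $F/M$ is an extension of $F_{\ell-1}$ by $(F/F_{\ell-1})/M$, both in $\filt\Cogen M$ by induction and by the base case).

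The hard part will be the key claim. The brick hypothesis is used twice: to ensure every nonzero endomorphism of $M$ is invertible, which forces the dichotomy $M \cap F_{\ell-1} \in \{0, M\}$ in the inductive step, and to identify the quotient of $M^k$ by any embedded copy of $M$ with a lower power of $M$ in the base case.
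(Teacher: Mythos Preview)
Your argument is correct, but both directions are considerably more labored than the paper's.

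For $(\supseteq)$ the paper simply applies $\Hom(-,M)$ to the sequence $0\to\ker g\to Y\to\image g\to 0$: since $\Hom(Y,M)=0$ and $\Ext^1(\image g,M)=0$ (the latter by the hypothesis, as $\image g\in\Cogen X\cap\leftidx{^\perp}M$), the long exact sequence yields $\Hom(\ker g,M)=0$ directly. Your pushout to isolate the torsion-free part $F$ of $K$ and then argue $F=0$ recovers the same conclusion by a longer path.

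For $(\subseteq)$ the discrepancy is sharper. The paper invokes Lemma~\ref{left perp}(3): $M$ is a minimal extending module for $\leftidx{^\perp}M$, so Property~\ref{propertytwo} forces the middle term $E$ of any nonsplit extension $0\to M\to E\to X'\to 0$ with $X'\in\leftidx{^\perp}M$ to lie in $\leftidx{^\perp}M$. Then the composite $E\to X'\hookrightarrow X^n$ is a map in $\leftidx{^\perp}M$ with kernel $M\notin\leftidx{^\perp}M$, contradicting $X^n\in\alpha(\leftidx{^\perp}M)$. That is the entire argument. Your route instead works with the torsion part $tE$, reduces to a nonzero quotient $X_1$ sitting in both $\leftidx{^\perp}M$ and (via your Key Claim) $\filt\Cogen M$, and derives a contradiction from $\leftidx{^\perp}M\cap\filt\Cogen M=0$. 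The Key Claim---that $\filt\Cogen M$ is closed under quotients by an embedded copy of the brick $M$---is correct and mildly interesting on its own, but it is doing the work that Property~\ref{propertytwo} of a minimal extending module gives for free. In effect you are re-deriving, by hand, the consequence $E\in\leftidx{^\perp}M$ (equivalently $tE=E$, which would collapse your $X_1$ to zero immediately) rather than citing it.
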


\begin{proof}
For ``$\subseteq$'': Let $X\in\alpha(\leftidx{^\perp}{M})$ and $X'\in \Cogen X\cap \leftidx{^\perp}{M}$. Suppose there is an non-split exact sequence $0\to M\to E\stackrel{p}\to X'\to 0$. Since $M\in \ME(\leftidx{^\perp}{M})$ by Proposition \ref{left perp}, it follows that $E\in \leftidx{^\perp}{M}$. But then the composition $f:E\stackrel{p}\to X' \hookrightarrow \oplus X$ is a homomorphism in $\leftidx{^\perp}{M}$ with $\Ker f=M\not\in \leftidx{^\perp}{M}$, which is a contradiction with $X\in\alpha(\leftidx{^\perp}{M})$.

For ``$\supseteq$'': Let $X\in\{X\in \leftidx{^\perp}{M} |  \forall X'\in\Cogen X\cap \leftidx{^\perp}{M}, \Ext^1(X', M)=0\}$ and $f:Y\to X$ any homomorphism in $\leftidx{^\perp}{M}$, we want to show that $\Ker f$ is also in $\leftidx{^\perp}{M}$.

Since $Y\in \leftidx{^\perp}{M}$, so is $\image f$. So $\Ext^1(\image f,M)=0$ by the assumption. Then in the exact sequence:
$$
0\to \Hom(\image f, M)\to \Hom(Y,M)\to \Hom(\Ker f,M)\to \Ext^1(\image f, M),
$$
both the terms $\Hom(Y,M)$ and $\Ext^1(\image f, M)$ vanishes. Hence $\Ker f\in \leftidx{^\perp}{M}$.
\end{proof}

\begin{remark}\label{wide perp rmk}
\normalfont
Due to Lemma \ref{ARF var}, for any $\Lambda$-module $M$, the subcategory $$\leftidx{^\perp}{M}\cap(\tau^{-1} M)^\perp=\{X\in\leftidx{^\perp}{M}| \forall X'\in\Cogen X, \Ext^1(X', M)=0\}.$$ 
Therefore there are inclusions of subcategories:
$$
\leftidx{^\perp}{M}\cap(\tau^{-1} M)^\perp \subseteq  \alpha(\leftidx{^\perp}{M}) \subseteq \leftidx{^{\perp_{0,1}}}{M}.
$$
\end{remark}

\begin{cor}\label{wide perp cor}
Let $H$ be a hereditary artin algebra and  $M$ be a brick in $\module H$.
Then $\alpha(\leftidx{^\perp}{M})= \leftidx{^{\perp_{0,1}}}{M}$.
\end{cor}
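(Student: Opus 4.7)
The plan is to combine Remark~\ref{wide perp rmk} with the Auslander--Reiten formula to promote the chain of inclusions there into an equality.

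Remark~\ref{wide perp rmk} already records the sandwich
\[
\leftidx{^\perp}{M}\cap(\tau^{-1} M)^\perp \;\subseteq\; \alpha(\leftidx{^\perp}{M}) \;\subseteq\; \leftidx{^{\perp_{0,1}}}{M},
\]
so the right-hand inclusion gives us one direction of the desired equality for free. To obtain the reverse inclusion $\leftidx{^{\perp_{0,1}}}{M}\subseteq \alpha(\leftidx{^\perp}{M})$, I would show that, when $H$ is hereditary, the outer terms of the sandwich actually coincide, that is
\[
\leftidx{^{\perp_{0,1}}}{M} \;\subseteq\; \leftidx{^\perp}{M}\cap(\tau^{-1} M)^\perp.
\]

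To verify this last inclusion, I would take an arbitrary $X\in \leftidx{^{\perp_{0,1}}}{M}$, so by definition $\Hom_H(X,M)=0$ (giving $X\in\leftidx{^\perp}{M}$) and $\Ext^1_H(X,M)=0$. Then I would invoke the Auslander--Reiten formula from Theorem~\ref{AR_formulas}, which applies for the hereditary algebra $H$ without further hypothesis, to get
\[
0=\Ext^1_H(X,M)\;\cong\; D\Hom_H(\tau^{-1}M, X).
\]
Hence $\Hom_H(\tau^{-1}M,X)=0$, i.e.\ $X\in(\tau^{-1}M)^\perp$. Combining, $X\in \leftidx{^\perp}{M}\cap(\tau^{-1}M)^\perp$, which by Remark~\ref{wide perp rmk} lies in $\alpha(\leftidx{^\perp}{M})$, completing the proof.

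There is essentially no obstacle: the content of the corollary is that the middle term $\alpha(\leftidx{^\perp}{M})$ of the sandwich in Remark~\ref{wide perp rmk} collapses to the upper bound whenever the algebra is hereditary, and this collapse is controlled by a single application of the AR formula. The only mild subtlety is the injective case, but it is handled uniformly: if $M$ is injective then $\tau^{-1}M=0$, both sides of the AR formula vanish, and the argument above goes through without modification (in fact $\Ext^1_H(-,M)$ vanishes identically, trivializing the claim). No appeal to Lemma~\ref{hereditary brick} is actually needed, since we never need to know that $\tau^{-1}M$ is a brick in order to run the above argument.
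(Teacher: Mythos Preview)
Your proof is correct and follows essentially the same approach as the paper: both use the sandwich from Remark~\ref{wide perp rmk} and then apply the Auslander--Reiten formula (Theorem~\ref{AR_formulas}) to show that the two outer terms coincide, forcing equality throughout. The only cosmetic difference is that the paper phrases this as the identity $(\tau^{-1}M)^\perp=\leftidx{^{\perp_1}}{M}$ and then intersects with $\leftidx{^\perp}{M}$, while you verify the inclusion $\leftidx{^{\perp_{0,1}}}{M}\subseteq\leftidx{^\perp}{M}\cap(\tau^{-1}M)^\perp$ elementwise.
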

\begin{proof}
When $H$ is hereditary $(\tau^{-1}M)^\perp= \leftidx{^{\perp_{1}}}M$ by Theorem~\ref{AR_formulas}.
Therefore \[\leftidx{^\perp}{M}\cap(\tau^{-1} M)^\perp= \leftidx{^\perp}{M}\cap\leftidx{^{\perp_{1}}}M = \leftidx{^{\perp_{0,1}}}{M}.\]
 The assertion follows Remark \ref{wide perp rmk}.
\end{proof}

Questions about whether both inclusions in Remark \ref{wide perp rmk} can be proper and if $\leftidx{^\perp}{M}\cap(\tau^{-1} M)^\perp$ is always a wide subcategory for arbitrary finite dimensional algebra are answered later in Appendix. Following our main task, we prove a generalized version of Corollary \ref{wide perp cor}.

\begin{prop}\label{wide many perp}
Let $H$ be a  finite dimensional hereditary algebra and $\{M_i\}$ a set of $\Hom$-orthogonal bricks. Then $\alpha(\bigcap\leftidx{^\perp}M_i)=\bigcap\alpha(\leftidx{^\perp}M_i)$.
\end{prop}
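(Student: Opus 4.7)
Writing $\calT := \bigcap_i \leftidx{^\perp}M_i$, Corollary~\ref{wide perp cor} identifies the right-hand side $\bigcap_i \alpha(\leftidx{^\perp}M_i)$ with $\bigcap_i \leftidx{^{\perp_{0,1}}}M_i$. So my plan is to prove the single equality
\[
\alpha(\calT) \;=\; \bigcap_i \leftidx{^{\perp_{0,1}}}M_i.
\]
The strategy is to adapt the two directions of Lemma~\ref{wide perp} from one brick to the family $\{M_i\}$, which is feasible because Proposition~\ref{left perp 2}(3) tells me that $\ME(\calT) = \{M_i\}$; that is, each $M_i$ is still a minimal extending module for the larger torsion class $\calT$, so the same property~\ref{propertytwo} used in Lemma~\ref{wide perp} continues to apply. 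The hereditary hypothesis will only be needed at the very end, to propagate $\Ext^1$-vanishing from $X$ to its submodules.

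For the inclusion $\alpha(\calT) \subseteq \bigcap_i \leftidx{^{\perp_{0,1}}}M_i$, I start with $X \in \alpha(\calT)$. Then $X \in \calT \subseteq \leftidx{^\perp}M_i$ automatically, and I only need $\Ext^1(X, M_i) = 0$ for every $i$. The argument mirrors the ``$\subseteq$'' direction of Lemma~\ref{wide perp}: a non-split extension $0 \to M_i \to E \to X \to 0$ would, by property~\ref{propertytwo} applied to $M_i \in \ME(\calT)$, force $E \in \calT$; but then $E \to X$ is a morphism in $\calT$ whose kernel is $M_i$, which does not belong to $\leftidx{^\perp}M_i$ (bricks satisfy $\Hom(M_i, M_i) \neq 0$) and therefore does not belong to $\calT$, contradicting $X \in \alpha(\calT)$.

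For the reverse inclusion, let $X \in \bigcap_i \leftidx{^{\perp_{0,1}}}M_i$, so in particular $X \in \calT$. Given any $g : Y \to X$ in $\calT$, I need $\ker g \in \calT$, i.e.\ $\Hom(\ker g, M_i) = 0$ for each $i$. Here the hereditary hypothesis enters: since $\image g$ is a submodule of $X$, the tail of the long exact sequence obtained from $0 \to \image g \to X \to X/\image g \to 0$ ends in $\Ext^2(X/\image g, M_i) = 0$, so $\Ext^1(X, M_i) = 0$ descends to $\Ext^1(\image g, M_i) = 0$. Applying $\Hom(-,M_i)$ to $0 \to \ker g \to Y \to \image g \to 0$ and using $\Hom(Y, M_i) = 0$ (since $Y \in \calT$) together with $\Ext^1(\image g, M_i) = 0$ gives $\Hom(\ker g, M_i) = 0$, as required. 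The only delicate point I anticipate is confirming that the property-\ref{propertytwo} argument really is available for every $M_i$ simultaneously, which is precisely the content of Proposition~\ref{left perp 2}(3); once this is invoked the rest is a pair of short exact sequence chases.
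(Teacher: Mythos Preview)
Your proof is correct, and for the forward inclusion $\alpha(\calT)\subseteq\bigcap_i\leftidx{^{\perp_{0,1}}}M_i$ you take a genuinely different and more direct route than the paper. The paper first reduces to \emph{simple} objects of $\alpha(\calT)$ (using that the right-hand side is extension-closed), invokes Proposition~\ref{simple=coext} to identify these as minimal co-extending modules of $\calF=\calT^\perp=\filt(\bigcup\Cogen M_j)$, and then shows $\Ext^1(X,M_i)=0$ by pushing a putative non-split extension into $\calF$ and exploiting the filtration structure together with Hom-orthogonality and the brick property of the $M_j$. You instead work directly with any $X\in\alpha(\calT)$, using Proposition~\ref{left perp 2}(3) to know each $M_i$ is minimal extending for $\calT$, so that property~\ref{propertytwo} forces the middle term of a non-split extension into $\calT$, and then the definition of $\alpha$ gives an immediate contradiction. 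Your argument is shorter and uses only the ``torsion side'' machinery; the paper's argument is more elaborate but exhibits Proposition~\ref{simple=coext} in action. For the reverse inclusion the paper simply says ``it suffices to show'' the forward one (treating it as clear from Remark~\ref{wide perp rmk}/Corollary~\ref{wide perp cor}); what you have written is precisely the natural verification of that direction.
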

\begin{proof}
It suffices to show that $\alpha(\bigcap\leftidx{^\perp}M_i)\subseteq\bigcap\alpha(\leftidx{^\perp}M_i)$. By Corollary \ref{wide perp cor}, $\bigcap\alpha(\leftidx{^\perp}M_i)=\bigcap \leftidx{^{\perp_{0,1}}}M_i$ which is closed under extensions. So it suffices to show that each simple object in $\alpha(\bigcap\leftidx{^\perp}M_i)$ lies in $\leftidx{^{\perp_{0,1}}}M_i$ for all $i$.

Assume $X$ is a simple object in  $\alpha(\bigcap\leftidx{^\perp}M_i)$. By Proposition \ref{simple=coext}, $X$ is a minimal co-extending module of $\calF:=(\bigcap\leftidx{^\perp}M_i)^\perp=\Join\filt\Cogen M_i$. Since $\Hom_H(X,\calF)=0$, it follows that $X\in\leftidx{^\perp}M_i$ for all $i$. 

So it remains to show that $\Ext^1_H(X,M_i)=0$ for all $i$. Assume there is a non-split exact sequence $0\to M_i\stackrel{l}\to E\stackrel{p}\to X\to 0$. Then by the assumption that $X$ is a minimal co-extending module, we known $E\in\calF$. Notice that $\Join\filt\Cogen M_i=\filt(\bigcup \Cogen M_i)$. There is a non-zero epimorphism $\pi:E\to E_0$, where $E_0\in\Cogen M_j$ for some $j$. We claim that $\pi\circ l\neq 0$. Otherwise, $\pi$ can factor through $X$. But then $\pi=0$ since $\Hom_H(X,\calF)=0$, a contradiction. So the composition $M_i\stackrel{l}\to E\stackrel{\pi}\to E_0\to\oplus M_j$ is non-zero. Since $\{M_i\}$ is $\Hom$-orthogonal, we have $i=j$. Since $M_i$ is a brick, the composition is indeed a split monomorphism. Hence $l$ is a split monomorphism which is a contradiction.
\end{proof}

\subsection{Theorem D: $\bar\kappa$ and $\epsilon$}\label{Theorem D}
\begin{theorem}[Theorem \ref{main}]\label{k and d} 
Let $H$ be a hereditary artin algebra. 

For torsion classes $\calT\in\tors_0\Lambda$, $\alpha\bar\kappa(\calT)= \epsilon\alpha(\calT)$, i.e. the following diagram commutes:
\begin{center}
\begin{tikzpicture}[scale=0.60]
  \node (A) at (0,0) {$\tors_0 H$}; 
  \node(B) at (4,0) {$\tors H$};
  \node (C) at (0,-3) {$\wide H$}; 
  \node (D) at (4,-3) {$\wide H.$};
    \node at (2,-2.75) {$\epsilon$};
    \node at (2,.25) {$\bar{\kappa}$};
    \node at (-.25,-1.5) {$\alpha$};
    \node at (4.25,-1.5) {$\alpha$};
  \draw[->] (A.east)--(B.west);
  \draw[->] (A.south)--(C.north);
  \draw[->] (C.east)--(D.west);
  \draw[->] (B.south)--(D.north);
\end{tikzpicture}
\end{center}
\end{theorem}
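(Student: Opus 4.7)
The plan is to chase the equality $\alpha\bar\kappa(\calT) = \epsilon\alpha(\calT)$ by explicitly writing both sides in terms of the Hom-orthogonal family of bricks $\{M_\alpha\}_{\alpha \in A}$ that appears in the canonical join representation of $\calT$. Since $\calT \in \tors_0 H$, we may write $\CJR(\calT) = \Join_{\alpha \in A}\calT_{M_\alpha}$ with each $M_\alpha$ a brick, and by Theorem \ref{torsion_cjr} the family $\{M_\alpha\}$ is Hom-orthogonal. Corollary \ref{CJC and simple objects} then identifies this family with the set of simple objects of $\alpha(\calT)$, and combining with Proposition \ref{wide 3} gives $\alpha(\calT) = \filt(\{M_\alpha\})$.

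Next I would compute each side. For the left side, Corollary \ref{kappa tors} gives $\bar\kappa(\calT) = \bigcap_{\alpha} {}^\perp M_\alpha$, and then Proposition \ref{wide many perp} together with Corollary \ref{wide perp cor} (which is where hereditariness is used) yield
\[
\alpha\bar\kappa(\calT) \;=\; \bigcap_{\alpha\in A}\alpha({}^\perp M_\alpha) \;=\; \bigcap_{\alpha\in A}{}^{\perp_{0,1}}M_\alpha.
\]
For the right side, since $\alpha(\calT) = \filt(\{M_\alpha\})$, we have by definition $\epsilon\alpha(\calta) = {}^{\perp_{0,1}}\filt(\{M_\alpha\})$. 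So the theorem reduces to the identity
\[
{}^{\perp_{0,1}}\filt(\{M_\alpha\}) \;=\; \bigcap_{\alpha\in A}{}^{\perp_{0,1}}M_\alpha.
\]

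The inclusion $\subseteq$ is immediate because $\{M_\alpha\} \subseteq \filt(\{M_\alpha\})$. For $\supseteq$, suppose $X \in \bigcap_\alpha {}^{\perp_{0,1}}M_\alpha$ and let $Y \in \filt(\{M_\alpha\})$, with filtration $0 = Y_0 \subset Y_1 \subset \cdots \subset Y_n = Y$ and $Y_i/Y_{i-1} \in \{M_\alpha\}$. I would induct on $n$: for each short exact sequence $0 \to Y_{i-1} \to Y_i \to Y_i/Y_{i-1} \to 0$, the long exact sequence
\[
0 \to \Hom(X,Y_{i-1}) \to \Hom(X,Y_i) \to \Hom(X,Y_i/Y_{i-1}) \to \Ext^1(X,Y_{i-1}) \to \Ext^1(X,Y_i) \to \Ext^1(X,Y_i/Y_{i-1}) \to \Ext^2(X,Y_{i-1})
\]
terminates because $H$ is hereditary, so $\Ext^2(X,Y_{i-1}) = 0$; by induction the outer terms vanish, forcing both $\Hom(X,Y_i) = 0$ and $\Ext^1(X,Y_i) = 0$. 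This closure of ${}^{\perp_{0,1}}X$ under extensions is the only point where we critically need $\mathrm{gl.dim}\, H \leq 1$, and it is the step most likely to cause trouble outside the hereditary setting; for hereditary algebras it is painless. Assembling the two computations then yields the desired commutativity of the square.
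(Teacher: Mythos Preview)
Your proof is correct and follows essentially the same route as the paper: compute both sides in terms of the bricks $\{M_\alpha\}$ from $\CJR(\calT)$, use Corollary~\ref{kappa tors}, Proposition~\ref{wide many perp}, and Corollary~\ref{wide perp cor} on the left, Proposition~\ref{wide 3} and Definition~\ref{delta def} on the right, and then verify $\leftidx{^{\perp_{0,1}}}\filt(\{M_\alpha\}) = \bigcap_\alpha \leftidx{^{\perp_{0,1}}}M_\alpha$ by an easy filtration argument. One small correction to your commentary: the closure of $\{Y:\Hom(X,Y)=0=\Ext^1(X,Y)\}$ under extensions does \emph{not} require hereditariness (the middle $\Ext^1$ term is already trapped between two zeros, so $\Ext^2$ never enters); the place hereditariness is genuinely used is earlier, in Corollary~\ref{wide perp cor} (via the Auslander--Reiten formula $(\tau^{-1}M)^\perp=\leftidx{^{\perp_1}}M$) and hence in Proposition~\ref{wide many perp}.
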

\begin{proof}
Assume $\CJR(\calT)=\Join_{i\in I}\calT_{M_i}$. Then
\begin{eqnarray*}
\alpha\bar\kappa(\calT)&\overset{Cor.~\ref{kappa tors}}{=\joinrel=}&\alpha(\bigcap\limits_{i\in I}\leftidx{^\perp}M_i)\\
&\overset{Prop.~\ref{wide many perp}}{=\joinrel=}&\bigcap\limits_{i\in I}\alpha(\leftidx{^\perp}M_i)\\
&\overset{Cor.~\ref{wide perp cor}}{=\joinrel=}&\bigcap\limits_{i\in I}\leftidx{^{\perp_{0,1}}}{M_i}.
\end{eqnarray*}
On the other hand, 
\begin{eqnarray*} 
\epsilon \alpha(\calT)&\overset{Prop.~\ref{wide 3}}{=\joinrel=}&\epsilon\filt(\{M_i\}_{i\in I})\\
&\overset{Def. \ref{delta def}}{=\joinrel=}&\leftidx{^{\perp_{0,1}}}\filt(\{M_i\}_{i\in I})
\end{eqnarray*}

Comparing these results, it is obvious that $\leftidx{^{\perp_{0,1}}}\filt(\{M_i\}_{i\in I})\subseteq\bigcap\limits_{i\in I}\leftidx{^{\perp_{0,1}}}{M_i}$. Conversely, pick any $X\in\bigcap\limits_{i\in I}\leftidx{^{\perp_{0,1}}}{M_i}$ and any $Y\in\filt(\{M_i\}_{i\in I})$, one can check that both $\Hom(X,Y)=0$ and $\Ext^1(X,Y)=0$. So $\bigcap\limits_{i\in I}\leftidx{^{\perp_{0,1}}}{M_i}\subseteq\leftidx{^{\perp_{0,1}}}\filt(\{M_i\}_{i\in I})$. Therefore, $\alpha\bar\kappa(\calT)= \epsilon\alpha(\calT).$
\end{proof}
 
 \begin{remark}
 \normalfont
 For wide subcategories $\calW$, $\bar\kappa\beta(\calW)=\beta\epsilon(\calW)$ is not always true. For example, in Example \ref{Kronecker torsions}, let $\calW=\calR$. One can check that $\bar\kappa\beta(\calW)=\calI$, whereas $\beta\epsilon(\calW)=0$. 
 \end{remark}
 
Theorem \ref{k and d} shows another surprising fact that $\bar\kappa$ is a bijection of the set of functorially finite torsion classes:  

\begin{corollary}\label{k ftors}
Let $H$ be a hereditary artin algebra and $\calT$ a functorially finite torsion class. Then $\bar\kappa(\calT)$ is functorially finite.
\end{corollary}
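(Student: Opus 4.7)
The plan is to deduce this corollary directly from Theorem~\ref{k and d}. Since $\calT \in \ftors H \subseteq \tors_0 H$ by Proposition~\ref{first inclusion}, the torsion class $\bar\kappa(\calT)$ is defined, and Theorem~\ref{k and d} gives $\alpha(\bar\kappa(\calT)) = \epsilon(\alpha(\calT))$. Now $\alpha(\calT) \in \fwide H$ by Theorem~\ref{IT bij}, and $\epsilon$ preserves functorial finiteness on $\fwide H$ by Lemma~\ref{perp pair}, so we conclude $\alpha(\bar\kappa(\calT)) \in \fwide H$.

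The bijection $\alpha : \ftors H \to \fwide H$ of Theorem~\ref{IT bij} has inverse $\beta$, so the torsion class $\calS := \beta(\epsilon\alpha(\calT))$ lies in $\ftors H$ and satisfies $\alpha(\calS) = \alpha(\bar\kappa(\calT))$. The corollary will follow once we establish $\bar\kappa(\calT) = \calS$. Writing $\CJR(\calT) = \bigvee_{i=1}^n \calT_{M_i}$ (a finite join since $\calT$ is functorially finite; see the proof of Proposition~\ref{first inclusion}), Corollary~\ref{kappa tors} yields $\bar\kappa(\calT) = \bigcap_{i=1}^n \leftidx{^\perp}M_i = \leftidx{^\perp}\alpha(\calT)$, while $\calS = \filt\Gen(\leftidx{^{\perp_{0,1}}}\alpha(\calT))$. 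One containment is immediate: $\leftidx{^{\perp_{0,1}}}\alpha(\calT) \subseteq \leftidx{^\perp}\alpha(\calT) = \bar\kappa(\calT)$, and because $\bar\kappa(\calT)$ is a torsion class, $\calS \subseteq \bar\kappa(\calT)$.

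The main obstacle is the reverse inclusion $\bar\kappa(\calT) \subseteq \calS$, which uses the hereditary hypothesis in an essential way. The plan is to verify it by checking equality of torsion-free classes. Proposition~\ref{left perp 2} gives $\bar\kappa(\calT)^\perp = \bigvee_i \filt\Cogen(M_i)$, and the identity $\beta(\calW)^\perp = \calW^\perp$ (valid because $\beta(\calW) = \filt\Gen(\calW)$) yields $\calS^\perp = (\leftidx{^{\perp_{0,1}}}\alpha(\calT))^\perp$. For hereditary $H$, the Geigle--Lenzing perpendicular category theorem realizes the functorially finite wide subcategory $\leftidx{^{\perp_{0,1}}}\alpha(\calT)$ as the module category of a smaller hereditary algebra embedded in $\module H$, whose projective generator $P$ lies in $\leftidx{^{\perp_{0,1}}}\alpha(\calT) \subseteq \leftidx{^\perp}\alpha(\calT)$; this structure is what lets one show that every module in $\leftidx{^\perp}\alpha(\calT)$ is generated by modules of $\leftidx{^{\perp_{0,1}}}\alpha(\calT)$, and hence that $(\leftidx{^\perp}\alpha(\calT))^\perp = (\leftidx{^{\perp_{0,1}}}\alpha(\calT))^\perp$. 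Granting this identification, $\bar\kappa(\calT) = \calS \in \ftors H$, completing the proof.
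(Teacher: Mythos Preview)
Your overall strategy matches the paper's: both arguments rest on Theorem~\ref{k and d}, which gives $\alpha\bigl(\bar\kappa(\calT)\bigr)=\epsilon\alpha(\calT)\in\fwide H$, and then one must conclude that $\bar\kappa(\calT)$ itself is functorially finite. You are right that the nontrivial step is precisely the identification $\bar\kappa(\calT)=\beta\epsilon\alpha(\calT)$. The paper's proof is a single sentence asserting that $\bar\kappa|_{\ftors H}$ \emph{is} the composition $\alpha^{-1}\epsilon\alpha$ of three bijections; read literally, this uses that $\alpha$ can be cancelled from the identity $\alpha\bar\kappa=\epsilon\alpha$, i.e.\ that $\alpha:\tors H\to\wide H$ is injective (equivalently $\beta\alpha=\mathrm{id}$ on all of $\tors H$) for hereditary $H$. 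This is a standard fact for hereditary algebras, though the paper only states the functorially finite version (Theorem~\ref{IT bij}). Granting it, the corollary is immediate and your elaborate reverse-inclusion argument is unnecessary.

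Your alternative route, proving $\bar\kappa(\calT)=\calS:=\beta\epsilon\alpha(\calT)$ by two inclusions, amounts to verifying $\beta\alpha\bigl(\bar\kappa(\calT)\bigr)=\bar\kappa(\calT)$ by hand for this particular torsion class. The inclusion $\calS\subseteq\bar\kappa(\calT)$ is fine. For the reverse you need $\leftidx{^\perp}W\subseteq\filt\Gen\bigl(\leftidx{^{\perp_{0,1}}}W\bigr)$ with $W=\alpha(\calT)$, and here your proposal has a genuine gap: the sentence ``this structure is what lets one show that every module in $\leftidx{^\perp}\alpha(\calT)$ is generated by modules of $\leftidx{^{\perp_{0,1}}}\alpha(\calT)$'' is exactly the statement to be proved, not an argument for it. Knowing that $\leftidx{^{\perp_{0,1}}}W\simeq\module H'$ with projective generator $P$ does not by itself yield $\Gen P=\leftidx{^\perp}W$; one still has to show, for instance, that the cokernel of the trace map $\Hom(P,X)\otimes P\to X$ lies in $\filt\Cogen W$ whenever $X\in\leftidx{^\perp}W$, which requires a further tilting-theoretic argument. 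So either complete this step, or---more economically---invoke the general hereditary bijection $\alpha:\tors H\xrightarrow{\sim}\wide H$ and argue as the paper does.
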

\begin{proof}
By Theorem~\ref{IT bij} and Lemma \ref{perp pair}, the restriction of $\bar{\kappa}$ to $\ftors H$ is a composition of three bijections, $\alpha \epsilon \alpha^{-1}: \ftors H \to \ftors H$.
\end{proof}
 
\begin{example}
\normalfont
If $\Lambda$ is not hereditary, $\bar\kappa$ does not necessarily preserve functorial finiteness. We show an example which is the dual of \cite[Example~4.5]{A}. Consider the algebra $\Lambda=kQ/\langle\alpha\gamma\rangle$ with the quiver
$$
\begin{tikzcd}
Q: 1&2\arrow[l,yshift=0.7ex,swap,"\alpha"]\arrow[l,yshift=-0.7ex,"\beta"]&3\arrow[l,swap,"\gamma"].
\end{tikzcd}
$$
Take the brick $P_3$. Since $P_3$ is projective, $\Gen(P_3)$ is a functorially finite torsion class. Hence $\calT=\filt\Gen(P_3)=\Gen(P_3)=\add\{P_3, I_2, I_3\}$ is functorially finite. On the other hand, $\kappa(\calT)=\leftidx{^\perp}P_3$. Let $R_\alpha=\begin{tikzcd}
k&k\arrow[l,yshift=0.7ex,swap,"\alpha"]\arrow[l,yshift=-0.7ex,"1"]&0\arrow[l,swap,"0"]\end{tikzcd}$.
It can be directly computed that $\leftidx{^\perp}P_3=\Join_{\alpha\neq0}\filt(\Gen R_\alpha)\Join\filt(\Gen(S_3))$, which is not covariantly finite.

 It can also be shown using a result of Smal\o\ from \cite{S} that $\leftidx{^\perp}P_3$ is functorially finite if and only if $(\leftidx{^\perp}P_3)^\perp=\filt\Cogen P_3$ is functorially finite.
 A result of Asai from \cite{A} shows that $\filt\Cogen P_3$ is not functorially finite.
\end{example}

 \section{Iterated Compositions of $\bar{\kappa}$ and Relation with AR translation}\label{sec:AR}
 
 Let $H$ be a hereditary algebra and $\delta$, $\epsilon$ be the maps between wide subcategories defined before. Ringel \cite{R} describes the map $\delta^2$ using the so-called \emph{extended Auslander-Reiten translation}:
 
 \begin{definition}
 Define $\bar\tau$ to be a permutation of indecomposable $H$-modules as follows:
 $$
 \bar\tau X=
 \begin{cases}
 \tau X & X \text{\ not projective}\\
 I(S) & X=P(S)
 \end{cases}
 $$
 where $I(S)$ and $P(S)$ are the injective envelope and projective cover of the same simple $S$.
 \end{definition}
 
 \begin{theorem}
 Let $H$ be a hereditary algebra and $\calW\in\fwide\Lambda$. Then $\delta^2\calW=\bar\tau\calW$.
 \end{theorem}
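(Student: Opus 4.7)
The plan is to reduce the theorem to a statement about functorially finite torsion classes, where Corollary \ref{ar noninj} already computes the action of $\bar\kappa^{2}$ on each completely join-irreducible $\calT_{M}$. Since $\alpha : \ftors H \to \fwide H$ is a bijection with inverse $\beta$ (Theorem \ref{IT bij}), and Theorem \ref{k and d} gives $\alpha\bar\kappa = \epsilon\alpha$ on $\ftors H \subseteq \tors_0 H$, we obtain $\epsilon = \alpha\bar\kappa\beta$ as maps $\fwide H \to \fwide H$. Because $\delta$ and $\epsilon$ are mutually inverse on $\fwide H$ by Lemma \ref{perp pair}, this gives $\delta^{2} = \alpha\bar\kappa^{-2}\beta$, so the theorem reduces to the identity $\bar\kappa^{-2}(\beta\calW) = \beta(\bar\tau\calW)$.

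Next I would unpack both sides in terms of the simples of $\calW$. Writing the simple objects of $\calW$ as a Hom-orthogonal collection $\{M_{1},\ldots,M_{k}\}$ of bricks (Proposition \ref{wide 3}), Corollary \ref{CJC and simple objects} gives the canonical join representation $\beta(\calW) = \bigvee_{i}\calT_{M_{i}}$. Lemma \ref{hereditary brick} together with the Auslander-Reiten formulas (Theorem \ref{AR_formulas}) shows that $\{\bar\tau M_{i}\}$ is again a Hom-orthogonal collection of bricks, so $\bar\tau\calW$ is the wide subcategory $\filt(\{\bar\tau M_{i}\})$ and $\beta(\bar\tau\calW) = \bigvee_{i}\calT_{\bar\tau M_{i}}$.

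The key remaining step is the distributivity
\[ \bar\kappa^{-2}\left(\bigvee_{i}\calT_{M_{i}}\right) = \bigvee_{i}\calT_{\bar\tau M_{i}}. \]
For each individual brick, Corollary \ref{ar noninj} provides $\bar\kappa^{-2}(\calT_{M_{i}}) = \calT_{\bar\tau M_{i}}$ (this is where exceptionality of the $M_{i}$, hence functorial finiteness of each $\calT_{M_{i}}$, enters), but promoting this to the canonical join is the main obstacle: $\bar\kappa$ is defined via the canonical join representation, so a priori $\bar\kappa^{-2}$ of a join need not split as the join of its images. I would attack this by tracking how the minimal extending module labels of cover relations (Theorem \ref{main: covers}) behave under $\bar\kappa$: by Corollary \ref{kappa tors} combined with Proposition \ref{left perp 2}, $\bar\kappa(\bigvee_{i}\calT_{M_{i}}) = \bigcap_{i}{}^{\perp}M_{i}$ has upper cover labels exactly $\{M_{i}\}$, and a dual analysis then determines the lower cover labels, hence the canonical joinands, after another application of $\bar\kappa$; Corollary \ref{k ftors} keeps the computation inside $\ftors H$ throughout. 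Once the displayed identity is established, the theorem follows from
\[ \delta^{2}\calW = \alpha\left(\bigvee_{i}\calT_{\bar\tau M_{i}}\right) = \alpha\beta(\bar\tau\calW) = \bar\tau\calW, \]
using Proposition \ref{wide 1} for the last equality.
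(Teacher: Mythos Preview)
The paper does not prove this theorem at all: it is quoted from Ringel \cite{R} and then \emph{used} as an input to prove Corollary~\ref{ar noninj}. So there is no ``paper's own proof'' to compare against, and your proposal has a fatal circularity: you invoke Corollary~\ref{ar noninj} to compute $\bar\kappa^{-2}(\calT_{M_i})$, but the proof of Corollary~\ref{ar noninj} in the paper rests precisely on the identity $\epsilon^{2}=\bar\tau^{-1}$, which is the dual form of the statement you are trying to establish. Any argument that routes through Corollary~\ref{ar noninj} (or through Theorem~\ref{k and d} combined with $\epsilon^{2}=\bar\tau^{-1}$) is therefore begging the question.

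Even if one tried to salvage the strategy by proving $\bar\kappa^{-2}(\calT_M)=\calT_{\bar\tau M}$ directly, your sketch has further gaps. First, Corollary~\ref{ar noninj} only treats the case where all $M_i$ are non-injective; inverting gives information only for non-projective bricks, so the projective case (where $\bar\tau$ is defined ad hoc as $P(S)\mapsto I(S)$) is not covered. Second, the ``distributivity'' step you flag as the main obstacle is genuinely hard: knowing $\bar\kappa^{-2}$ on each $\calT_{M_i}$ does not, by itself, determine $\bar\kappa^{-2}$ on their canonical join, and your outline via cover-relation labels does not explain how to read off the \emph{lower} covers of $\bar\kappa(\calT)$ from the data at hand. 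Third, the parenthetical claim that the $M_i$ are exceptional (hence that each $\calT_{M_i}$ is functorially finite) is not justified for arbitrary $\calW\in\fwide H$. A correct proof of Ringel's theorem proceeds internally to wide subcategories, using properties of perpendicular pairs and the Coxeter transformation, and does not pass through the torsion-class machinery of this paper.
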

 
 Here, we give a similar result for torsion classes using our $\bar\kappa$ map.
 
 \begin{cor} \label{ar noninj}
 Let $H$ be a hereditary algebra and $\calT\in\ftors\Lambda$ with a canonical join representation $\Join\limits_{1\leq i\leq k}\filt\Gen(M_i)$. Then ${\bar\kappa}^2(\calT)$ is a functorially finite torsion class.
 Furthermore, if each $M_i$ is non-injective, then $\bar{\kappa}^2(\calT)$ has a canonical join representation $\Join\limits_{1\leq i\leq k}\filt\Gen({\tau}^{-1}M_i)$. 
 \end{cor}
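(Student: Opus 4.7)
The strategy is to use Theorem~\ref{k and d} to transfer the statement from $\tors H$ to $\wide H$, and then apply the formula $\delta^2 \calW = \bar\tau \calW$ stated in Section~\ref{sec:AR} to identify the simple objects of $\epsilon^2 \alpha(\calT)$. Functorial finiteness of $\bar\kappa^2(\calT)$ is immediate: Corollary~\ref{k ftors} says $\bar\kappa$ is a self-bijection of $\ftors H$, so iterating twice keeps us inside $\ftors H$.

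For the canonical join representation, note first that $\ftors H \subseteq \tors_0 H$ by Proposition~\ref{first inclusion}, and that $\bar\kappa(\calT)$ also lies in $\ftors H \subseteq \tors_0 H$. Thus Theorem~\ref{k and d} applies twice to give
\[
\alpha(\bar\kappa^2(\calT)) \,=\, \epsilon(\alpha(\bar\kappa(\calT))) \,=\, \epsilon^2(\alpha(\calT)).
\]
By Corollary~\ref{CJC and simple objects}, $\alpha(\calT)$ is the wide subcategory whose simple objects are precisely the canonical joinand-bricks $M_1,\dots,M_k$. From Lemma~\ref{perp pair}, $\delta$ and $\epsilon$ are mutually inverse bijections on $\fwide H$, so Ringel's identity $\delta^2 = \bar\tau$ becomes $\epsilon^2 = \bar\tau^{-1}$. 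Reading off simple objects, $\alpha(\bar\kappa^2(\calT))$ has simple objects precisely $\{\bar\tau^{-1} M_i\}_{1\le i\le k}$.

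Under the assumption that every $M_i$ is non-injective, $\bar\tau^{-1} M_i = \tau^{-1} M_i$, and by Lemma~\ref{hereditary brick} each $\tau^{-1} M_i$ is again a brick. Since $\bar\kappa^2(\calT) \in \tors_0 H$ admits a canonical join representation of the form $\Join_j \calT_{N_j}$, and since the $N_j$ must coincide with the simple objects of $\alpha(\bar\kappa^2(\calT))$ by Corollary~\ref{CJC and simple objects}, we conclude
\[
\bar\kappa^2(\calT) \,=\, \Join_{1\le i\le k} \filt\Gen(\tau^{-1} M_i),
\]
and this is the canonical join representation. The main subtle point is justifying that Ringel's formula $\delta^2 \calW = \bar\tau \calW$ acts on simple objects in the expected way (so that $\epsilon^2$ sends simples to simples via $\bar\tau^{-1}$), and then invoking the non-injectivity hypothesis precisely at the step where $\bar\tau^{-1}$ is replaced with $\tau^{-1}$.
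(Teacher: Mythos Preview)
Your approach is essentially the paper's: pass through Theorem~\ref{k and d} twice to get $\alpha(\bar\kappa^2(\calT))=\epsilon^2\alpha(\calT)$, invoke Ringel's $\epsilon^2=\bar\tau^{-1}$, identify the simple objects, and translate back via the Ingalls--Thomas bijection (the paper cites Theorem~\ref{IT bij} where you cite Corollary~\ref{CJC and simple objects}, but these do the same work here).

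There is one genuine ordering issue. You first assert that the simple objects of $\epsilon^2\alpha(\calT)=\bar\tau^{-1}\filt(\{M_i\})$ are $\{\bar\tau^{-1}M_i\}$, and only afterwards invoke non-injectivity to rewrite $\bar\tau^{-1}M_i$ as $\tau^{-1}M_i$. But the first assertion already requires the non-injectivity hypothesis: Ringel's formula is an equality of \emph{subcategories}, and $\bar\tau^{-1}$ is merely a bijection on indecomposables, not an exact functor, so it need not carry simple objects to simple objects. Indeed, the Remark immediately following this corollary in the paper gives an explicit example (with $M_2=S_3$ injective) where the simples of $\bar\tau^{-1}\filt(\{M_i\})$ are \emph{not} the $\bar\tau^{-1}M_i$. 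The paper therefore uses non-injectivity earlier: since each $M_i$ is non-injective and $H$ is hereditary, $\filt(\{M_i\})$ contains no injective modules, so $\bar\tau^{-1}=\tau^{-1}$ on the whole subcategory, and $\tau^{-1}$ is then an equivalence onto its image, hence preserves simple objects. Once you relocate the hypothesis to this step, your argument is complete and matches the paper's.
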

 \begin{proof}
 The functorially finiteness of  ${\bar\kappa}^2(\calT)$ follows from Corollary \ref{k ftors}.
 Now assume that each $M_i$ is non-injective.
 Then according to Theorem \ref{k and d} $\alpha{\bar\kappa}^2(\calT)=\epsilon^2\alpha(\calT)=\epsilon^2\filt(\{M_i\}_{1\leq i\leq k})=\bar{\tau}^{-1}\filt(\{M_i\}_{1\leq i\leq k})$.
 
 Since each $M_i$ is non-injective, $\filt(\{M_i\}_{1\leq i\leq k})$ ($=\filt(\{M_i\})$ for short below) contains no injective modules.
 Therefore $\bar{\tau}^{-1}(X) =\tau^{-1}(X)$ for each $X\in \filt(\{M_i\})$.
Therefore $\tau^{-1}(\filt(\{M_i\}) = \bar{\tau}^{-1}(\filt(\{M_i\})$.
Now $\tau^{-1}$ is an equivalence from $\filt(\{M_i\})$ to $\tau^{-1}(\filt(\{M_i\})$ (because $H$ is hereditary and $\filt(\{M_i\})$ contains no injective modules).
Hence $\tau^{-1}$ preserves simple objects.
So $\bar{\tau}^{-1}\filt(\{M_i\})$ is equal to $\filt(\{\tau^{-1}M_i\})$.
  Using Theorem~\ref{IT bij}, ${\bar\kappa}^2(\calT)=\Join\limits_{1\leq i\leq k}\filt\Gen({\tau}^{-1}M_i)$.
 \end{proof}
 
 \begin{remark} 
\normalfont
  Let $H$ be a hereditary algebra and $I$ be an indecomposable injective module. Then for the functorially finite torsion class $\calT=\filt\Gen(I)=\Gen(I)$, ${\bar\kappa}^2\calT=\filt\Gen(\bar{\tau}^{-1}I)$. However, Corollary \ref{ar noninj} does not always hold if some $M_i$'s are injective:
 
 Let $\Lambda=kQ$ be the path algebra with $Q=3\to2\to1$. \\Take $\calT=\filt\Gen(S_2)\Join\filt\Gen(S_3)$, then ${\bar\kappa}^2(\calT)=\filt\Gen(P_2)\Join\filt\Gen(S_3)$, where $P_2\neq \bar{\tau}^{-1} S_2,  \bar{\tau}^{-1} S_3$.
 \end{remark}
 
 We conclude this section with a proof of Theorem~\ref{Kr_theorem}, which we restate below.
 \begin{theorem}[Theorem~\ref{Kr_theorem}]
Let $\Lambda$ be $\tau$-tilting finite, and let $r$ be the number of simples in $\module\Lambda$.
For any $\calT\in \tors \Lambda$ let $|\calT|$ denote the number of canonical joinands of $\calT$.
Then for any $\bar{\kappa}$-orbit $\calO$ we have
\[\frac{1}{|\calO|} \sum_{\calT\in \calO}|\calT| = r/2.\]
 \end{theorem}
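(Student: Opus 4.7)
The plan is to establish the pointwise identity $|\calT| + |\bar{\kappa}(\calT)| = r$ for every $\calT \in \tors \Lambda$, and then sum around each cycle of $\bar{\kappa}$. Let $l(\calT)$ and $u(\calT)$ denote the number of lower and upper covers of $\calT$ in $\tors \Lambda$, respectively.

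First I would prove $|\calT| = l(\calT)$. Since $\Lambda$ is $\tau$-tilting finite, $\tors \Lambda$ is finite and semidistributive with $\tors \Lambda = \ftors \Lambda$, so every element has a canonical join representation whose joinands are completely join-irreducible by Proposition~\ref{CJR-join-irreducibles}(b), hence of the form $\calT_M$ for a brick $M$ by Theorem~\ref{schur modules to join-irreducibles}(1). The argument used in the proof of Proposition~\ref{first inclusion} (via \cite[Corollary~3.2.6]{BCZ}) shows that the canonical joinands of $\calT$ correspond exactly to the minimal extending modules labeling its lower covers; combined with the uniqueness in Theorem~\ref{main: covers}, this yields $|\calT| = l(\calT)$. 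Next, $l(\calT) = u(\bar{\kappa}(\calT))$ is immediate from the remark after Proposition~\ref{left perp 2}, which identifies the labels of the lower covers of $\calT$ with those of the upper covers of $\bar{\kappa}(\calT)$.

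The third ingredient is $\tau$-tilting theory: every $\calT \in \tors \Lambda$ corresponds to a basic support $\tau$-tilting pair with exactly $r$ indecomposable summands, and each summand produces a unique mutation which realizes either an upper or a lower cover of $\calT$ in $\tors \Lambda$ (\cite[Theorem~2.33]{AIR}). Hence $u(\calT) + l(\calT) = r$, and combining with the two identities above gives
\[
|\calT| + |\bar{\kappa}(\calT)| = l(\calT) + l(\bar{\kappa}(\calT)) = u(\bar{\kappa}(\calT)) + l(\bar{\kappa}(\calT)) = r.
\]

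Finally, because $\tors \Lambda$ is a finite semidistributive lattice, $\bar{\kappa}$ is a bijection (with inverse built from $\kappa_*$ and canonical meet representations), so every orbit is a cycle $\calO = \{\calT_0, \ldots, \calT_{n-1}\}$ with $\calT_{i+1} = \bar{\kappa}(\calT_i)$ modulo $n$. The pointwise identity forces $|\calT_i| + |\calT_{i+1}| = r$ for all $i$, hence $|\calT_i| = |\calT_{i+2}|$. If $n$ is even, pairing consecutive elements gives $\sum_i |\calT_i| = (n/2)\cdot r$; if $n$ is odd, the $2$-periodicity wraps around to force $|\calT_0| = |\calT_1| = r/2$ and again $\sum_i |\calT_i| = n \cdot r/2$. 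Either way the average is $r/2$. The main obstacle is pinning down $|\calT| = l(\calT)$ in a form that cleanly chains the earlier canonical-join-representation results; the rest is straightforward bookkeeping once the cover-count identity $u(\calT) + l(\calT) = r$ is in hand.
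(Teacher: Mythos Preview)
Your proof is correct and follows essentially the same approach as the paper: both establish the pointwise identity $|\calT|+|\bar{\kappa}(\calT)|=r$ via the regularity $u(\calS)+l(\calS)=r$ of $\tors\Lambda$ together with the correspondence between lower covers of $\calT$ and upper covers of $\bar{\kappa}(\calT)$, then average around each $\bar{\kappa}$-cycle. Your write-up is actually more explicit than the paper's (which simply asserts regularity and the identity without detailed justification), and your even/odd-$n$ case split is equivalent to the paper's $k\ne r-k$ versus $k=r/2$ split.
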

 
 \begin{proof}
Since $\Lambda$ is $\tau$-tilting finite, the lattice $\tors \Lambda$ is finite and \emph{regular} meaning that for any $\calT\in \tors\Lambda$:
\[|\{\calS: \calT \covers \calS\}|+ |\{\calU:\calT\covered \calU\}| = r\]
For the remainder of the proof, fix $\calT$ in $\tors \Lambda$, and write $\calO_\calT$ for the $\bar{\kappa}$-orbit of $\calT$.

Suppose that $|\calT|=k$ where $r-k \ne k$.
Observe that if $\bar{\kappa}(\calT')= \calT$ then $|\calT'|=r-k$, because $\tors \Lambda$ is regular. 
Also, $|\bar{\kappa}^i(\calT)|=r-k$ if and only if $i$ is odd.
Therefore, $|\calO_\calT|= (i+1)$, and the sum $\sum_{\calS\in \calO_\calT}|\calS|$ consists of $(i+1)/2$ pairs of torsion classes $\calS$ and $\bar{\kappa}(\calS)$ with $|\calS|+|\bar{\kappa}(\calS)|=k+(r-k)$.
We compute:
\begin{align*}
\frac{1}{|\calO_\calT|} \sum_{\calS\in \calO_\calT}|\calS| = & \frac{1}{(i+1)}\left(k+(r-k)+k+(r-k)+\cdots + k+(r-k)\right)\\
&=\frac{1}{(i+1)}\left(k+(r-k)\right)\frac{(i+1)}{2}\\
&=\frac{r}{2}
\end{align*}
When $r$ is even and $|\calT|=r/2$ then we have:
\begin{align*}
\frac{1}{|\calO_\calT|} \sum_{\calS\in \calO_\calT}|\calS| =&
\frac{1}{|\calO_\calT|}\left(\frac{r}{2}\right)|\calO_\calT|\\
=&\frac{r}{2}
\end{align*}
\end{proof}
 \section{Appendix}
We show some minor results which we obtained during our study of the perpendicular categories in previous sections.  

\begin{example}
\normalfont
When $\Lambda$ is not hereditary, both inclusions in Remark \ref{wide perp rmk} can be proper: 

Let $Q$ be the $\mathbb A_2$ quiver and $\Lambda=kQ\otimes (k[x]/\langle x^2\rangle)$. The algebra $\Lambda$ is given by the following quiver with relations:
$$
\begin{tikzcd}
 2\arrow[loop, left, "\epsilon_2"]\ar[r, "\alpha"] &1\arrow[loop,right, "\epsilon_1"]
 \end{tikzcd}
$$

$$
\epsilon_1^2=\epsilon_2^2=0, \alpha\epsilon_2=\epsilon_1\alpha.
$$
The AR-quiver of $\module\Lambda$ is as the following:

\begin{center}
\begin{tikzpicture}[->]
\node (11a) at (-.5,0) {$\begin{smallmatrix}1\\1\end{smallmatrix}$}; 
\node (1212a) at (.5,1) {$\begin{smallmatrix} 1&&2&\\ &1&&2\end{smallmatrix}$};
\node (22a) at (1.5,0) {$\begin{smallmatrix}2\\2\end{smallmatrix}$};
\node  (121a) at (1.5,2) {$\begin{smallmatrix}1&&2\\&1&\end{smallmatrix}$};
\node (21b) at (0.3,3) {$\begin{smallmatrix} 2\\1\end{smallmatrix}$};
\node (1) at (3,3) {$\begin{smallmatrix}1\end{smallmatrix}$};
\node  (p2) at (3,2) {$\begin{smallmatrix}&2&\\1&&2\\&1&\end{smallmatrix}$};
 \node  (2a) at (3,1) {$\begin{smallmatrix}2\end{smallmatrix}$};
 \node  (212) at (4.5,2) {$\begin{smallmatrix}&2\\1&&2\end{smallmatrix}$};
 \node  (11b) at (4.5,4) {$\begin{smallmatrix}1\\1\end{smallmatrix}$};
 \node  (1212b) at (5.5,3) {$\begin{smallmatrix}1&&2&\\ &1&&2\end{smallmatrix}$};
 \node  (21) at (5.5,1) {$\begin{smallmatrix}2\\1\end{smallmatrix}$};
 \node  (121b) at (6.5,2) {$\begin{smallmatrix}1&&2\\&1&\end{smallmatrix}$};
 \node  (22b) at (6.5,4) {$\begin{smallmatrix}2\\2&\end{smallmatrix}$};
 \node  (2b) at (7.5,3) {$\begin{smallmatrix}2\end{smallmatrix}$};
 
 \draw (11a)--(1212a);  \draw (1212a)--(22a);  \draw (22a)--(2a);
  \draw (1212a)--(121a);  \draw (121a)--(2a);  \draw (121a)--(p2);  \draw (121a)--(1);
   \draw (1)--(212); \draw (p2)--(212);  \draw (2a)--(212);
    \draw (1)--(11b);  \draw (11b)--(1212b);   \draw (21)--(121b);
 \draw (212)--(21);  \draw (212)--(1212b);
  \draw (1212b)--(121b);   \draw (1212b)--(22b);   \draw (22b)--(2b);  \draw (121b)--(2b);
  \draw(21b)--(121a);

\draw[-][dashed] (11b)--(22b);
\draw[-][dashed] (11a)--(22a);
\draw[-][dashed] (1212a)--(2a)--(21);
\draw[-][dashed] (212)--(121b);
\draw[-][dashed] (21b)--(1)--(1212b)--(2b);
 %
\end{tikzpicture}
\end{center}

Let $M=S_1$, then $\leftidx^{\perp} M\cap(\tau^{-1} M)^\perp=0$, $\alpha(\leftidx^{\perp} M)=\add\{\begin{smallmatrix}2\\1\end{smallmatrix},\begin{smallmatrix}&2&\\1&&2\\&1&\end{smallmatrix}\}=\leftidx^{{\perp_{0,1}}} M$.

For the other proper containment, consider the following algebra: Let $Q$ be the quiver $4\stackrel{\gamma}\to3\stackrel{\beta}\to2\stackrel{\alpha}\to1$. Let $\Lambda$ be the algebra $kQ/<\alpha\beta\gamma>$.

The AR-quiver looks like:
$$
\begin{tikzpicture}[->]
\node  (1) at (0,0)  {$\begin{smallmatrix}1\end{smallmatrix}$}; 
\node (2) at (2,0) {$\begin{smallmatrix}2\end{smallmatrix}$}; 
\node (3) at (4,0) {$\begin{smallmatrix}3\end{smallmatrix}$};
\node (4) at (6,0) {$\begin{smallmatrix}4\end{smallmatrix}$};

\node (21) at (1,1) {$\begin{smallmatrix}2\\1\end{smallmatrix}$};
\node (32) at (3,1) {$\begin{smallmatrix}3\\2\end{smallmatrix}$};
\node (43) at (5,1) {$\begin{smallmatrix}4\\3\end{smallmatrix}$};

\node (321) at (2,2) {$\begin{smallmatrix}3\\2\\1\end{smallmatrix}$};
\node (432) at (4,2) {$\begin{smallmatrix}4\\3\\2\end{smallmatrix}$};

\draw (1)--(21); \draw (21)--(321); \draw (21)--(2); \draw (321)--(32); \draw(2)--(32);
\draw (32)--(3); \draw(32)--(432);\draw(3)--(43); \draw(432)--(43);\draw(43)--(4);   

\draw[-][dashed] (1)--(2)--(3)--(4);
\draw[-][dashed] (21)--(32)--(43);
 \end{tikzpicture}
$$ 

Consider $M=\begin{smallmatrix}2\\1\end{smallmatrix}$, then $\leftidx^{\perp} M\cap(\tau^{-1} M)^\perp=\alpha(\leftidx^{\perp} M)=\add\{\begin{smallmatrix}3\\2\\1\end{smallmatrix},\begin{smallmatrix}2\end{smallmatrix}, \begin{smallmatrix}4\end{smallmatrix}\}$, and $\leftidx^{{\perp_{0,1}}} M=\add\{\begin{smallmatrix}3\\2\\1\end{smallmatrix},\begin{smallmatrix}2\end{smallmatrix}, \begin{smallmatrix}4\\3\\2\end{smallmatrix},  \begin{smallmatrix}4\\3\end{smallmatrix}, \begin{smallmatrix}4\end{smallmatrix}\}$.

 \end{example}

The following result is a generalization of Jasso's \cite[Proposition 3.6]{J},  where we do not assume 
that $M$ is $\tau$-rigid.
\begin{theorem}
Let $\Lambda$ be an artin algebra and let $M$ be any $\Lambda$-module. Then $\leftidx{^\perp}  M \cap (\tau^{-1} M) ^\perp$  is a wide subcategory.
\end{theorem}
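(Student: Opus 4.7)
The plan is to use Lemma~\ref{ARF var}(a) to rewrite
\[
\calC := \leftidx{^\perp}{M} \cap (\tau^{-1}M)^\perp = \bigl\{X \in \module\Lambda : \Hom_\Lambda(X,M) = 0 \text{ and } \Ext^1_\Lambda(X',M) = 0 \text{ for every } X' \in \Cogen X\bigr\}.
\]
This dual description will do most of the work. Since $\leftidx{^\perp}{M}$ is a torsion class (closed under quotients and extensions) and $(\tau^{-1}M)^\perp$ is a torsion-free class (closed under submodules and extensions), $\calC$ is automatically closed under extensions, so only closure under kernels and cokernels needs to be checked.

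For kernels of a morphism $f\colon X\to Y$ with $X,Y\in\calC$, set $K=\ker f$ and $I=\image f$. The containment $K\subseteq X$ gives $\Cogen K \subseteq \Cogen X$, so the $\Ext^1$-vanishing condition transfers from $X$ to $K$ for free. For the Hom-vanishing, the inclusion $I\subseteq Y$ puts $I$ in $\Cogen Y$, so $\Ext^1(I,M)=0$; combined with $\Hom(X,M)=0$, the long exact sequence from $0\to K\to X\to I\to 0$ forces $\Hom(K,M)=0$.

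For cokernels, let $C=\cok f$. Since $C$ is a quotient of $Y\in\leftidx{^\perp}{M}$ and a torsion class is closed under quotients, the Hom-vanishing $\Hom(C,M)=0$ is immediate. The real task, which I expect to be the main obstacle, is to verify $\Ext^1(C',M)=0$ for every $C'\in\Cogen C$. The plan is a pull-back trick: given $C'\hookrightarrow C^n=Y^n/I^n$, let $Y'\subseteq Y^n$ be the preimage of $C'$ under the natural surjection $\pi\colon Y^n\twoheadrightarrow C^n$; this yields a short exact sequence $0\to I^n \to Y' \to C' \to 0$ with $Y'\in\Cogen Y$. Then $\Ext^1(Y',M)=0$ by the $\Ext$-condition on $Y$, while $I$ is a quotient of $X\in\leftidx{^\perp}{M}$, so $I\in\leftidx{^\perp}{M}$ and $\Hom(I^n,M)=0$. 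The long exact sequence from $\Hom(-,M)$ then shows $\Ext^1(C',M)$ is a quotient of $\Hom(I^n,M)=0$, as required. Everything else is a direct application of the alternative description of $\calC$ and standard long exact sequences.
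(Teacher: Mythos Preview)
Your proof is correct and follows the same overall strategy as the paper: use Lemma~\ref{ARF var}(a) to translate membership in $(\tau^{-1}M)^\perp$ into the vanishing of $\Ext^1(-,M)$ on $\Cogen(-)$, and then verify closure under kernels and cokernels via long exact sequences. The kernel argument is essentially identical to the paper's.

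The cokernel argument, however, is genuinely different and in fact cleaner than the paper's. The paper proceeds by taking an arbitrary extension $0\to M\to E\to C'\to 0$ with $C'\subseteq(\cok f)^n$, forming a pull-back along the surjection $X^n\twoheadrightarrow(\cok f)^n$ (in their notation the roles of $X$ and $Y$ are swapped), and then explicitly constructing a retraction of $M\hookrightarrow E$ using that the pulled-back extension splits and that $\image f\in\leftidx{^\perp}{M}$. Your approach avoids the element-level splitting argument entirely: you pull back $C'$ along $Y^n\twoheadrightarrow C^n$ to get $0\to I^n\to Y'\to C'\to 0$ with $Y'\in\Cogen Y$, and then a single long exact sequence (using $\Hom(I^n,M)=0$ and $\Ext^1(Y',M)=0$) finishes immediately. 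Both arguments ultimately rest on the same two facts, $I\in\leftidx{^\perp}{M}$ and $\Ext^1(\Cogen Y,M)=0$, but your packaging is more direct and avoids the diagram chase.
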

\begin{proof}
First, it is clear that  $\leftidx{^\perp}  M \cap (\tau^{-1} M) ^\perp$  is closed under extensions. 

Second, given a morphism $f: Y\to X$ with $X,Y\in \leftidx{^\perp}  M \cap (\tau^{-1} M) ^\perp$, we need to show that both $\Ker f$ and $\cok f\in \leftidx{^\perp}  M \cap (\tau^{-1} M)^\perp$.

Observe that given an exact sequence $0\to A\to B\to C\to 0$. If $B\in\leftidx{^\perp}  M$ then so is $C$. If $B\in (\tau^{-1} M)^\perp$ then so is $A$.  

Using this fact, we known that $\Ker f\in (\tau^{-1} M) ^\perp$, $\cok f\in \leftidx{^\perp}  M$ and $\image f\in \leftidx{^\perp}  M \cap (\tau^{-1} M) ^\perp$.

To show $\Ker f\in \leftidx{^\perp}  M$: Using the exact sequence 
$$
0=\Hom(Y,M)\to\Hom(\Ker f,M)\to\Ext^1(\image f, M).
$$
Since we have already shown $\Hom(\tau^{-1}M, \image f)=0$, using AR formula it follows that 
$$
\Ext^1(\image f, M)\cong D\underline{\Hom}(\tau^{-1}M, \image f)=0.
$$ 
Hence $\Hom(\Ker f,M)=0$.

To show $\cok f\in (\tau^{-1} M) ^\perp$: By Lemma \ref{ARF var} it suffices to show that for any submodule $C'\subseteq (\cok f)^n$, $\Ext^1(C', M)=0$.
In fact, suppose there is an exact sequence $0\to M \stackrel{s} \to E \stackrel{t}\to C'\to 0$  and denote by $i: C'\hookrightarrow (\cok f)^n$ the inclusion. Then there is a pull-back diagram:
$$
\begin{tikzcd}
&&M\arrow[d,hook,"u"']\arrow[r,equal]&M\arrow[d,hook,"s"]\\
0\arrow[r]&(\image f)^n\arrow[d]\arrow[r,"a"] &F\arrow[r,"b"]\arrow[d,"\quad(p.b.)"]\arrow [u,dotted,bend right,"{u'}" description]& E\arrow[r]\arrow[d,"i\circ t"]\arrow[ul,dotted, "c"]&0\\
 0\arrow[r] &(\image f)^n\arrow[r] &X^n\arrow[r] &(\cok f)^n\arrow[r] &0
 \end{tikzcd}
$$

Since $X\in (\tau^{-1} M)^\perp$, $\Ext^1(\Cogen X, M)=0$. Hence $u$ is a split monomorphism. i.e. there exists $u': F\to M$ such that $u'\circ u= 1$. Since $\image f\in \leftidx{^\perp}  M$, the composition $u'\circ a=0$. Hence $u'$ factors through $b$. i.e. there exists $c: E\to M$ such that $u'=c\circ b$.

But since $cs=cbu=u'u=1$, $s$ is a split monomorphism. Hence the exact sequence $0\to M \stackrel{s} \to E \stackrel{t}\to C'\to 0$ splits and therefore $\Ext^1(C',M)=0$.
\end{proof}

\begin{corollary}
\begin{enumerate}
\item Dually  $M^\perp \cap \leftidx{^\perp}\tau M$ is a wide subcategory. 
\item Let $\calM$ be a set of $\Lambda$-modules, then the full subcategory $\{X|X\in M \cap (\tau^{-1} M) ^\perp \text{\ for all } M\in\calM\}$ is a wide subcategory.
\item Let $\calM$ be a set of $\Lambda$-modules, then the full subcategory $\{X|X\in M^\perp \cap \leftidx{^\perp}\tau M \text{\ for all } M\in\calM\}$ is a wide subcategory.
\item If $H$ is a hereditary algebra and $\calM$ is a set of $H$-modules, then $\leftidx^{{\perp_{0,1}}}\calM$ is a wide subcategory.  
\item If $H$ is a hereditary algebra and $\calM$ is a set of $H$-modules, then $\calM^{{\perp_{0,1}}}$ is a wide subcategory. 

\end{enumerate}
\end{corollary}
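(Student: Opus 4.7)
The plan is to deduce all five parts of the corollary from the immediately preceding theorem and one elementary observation: the intersection of any family of wide subcategories is again wide. This follows at once from the two-of-three characterization, since if a short exact sequence has two terms in every $\calW_i$, the third lies in every $\calW_i$ as well.

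For part (1), the assertion is dual to the preceding theorem, in the sense that swapping $\leftidx{^\perp}M$ with $M^\perp$ and $\tau^{-1}$ with $\tau$ turns one statement into the other. I would give the proof in one of two ways. The first is to transport through the standard duality $D=\Hom_k(-,k)\colon \module\Lambda\to\module\Lambda^{\mathrm{op}}$, which sends wide subcategories to wide subcategories and exchanges $\tau$ with $\tau^{-1}$ (under the usual identification of non-projective indecomposables over $\Lambda$ with non-injective indecomposables over $\Lambda^{\mathrm{op}}$); then the wideness of $M^\perp\cap\leftidx{^\perp}\tau M$ over $\Lambda$ becomes the wideness of $\leftidx{^\perp}(DM)\cap(\tau^{-1}DM)^\perp$ over $\Lambda^{\mathrm{op}}$, which is the theorem. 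The second is to re-run the proof of the theorem verbatim, swapping kernels for cokernels, submodules for quotient modules, and pull-backs for push-outs. The Auslander--Smal{\o} Lemma~\ref{ARF var} is perfectly symmetric in its two parts, so the dualization is mechanical.

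For parts (2) and (3), interpreting the subcategory in (2) as $\leftidx{^\perp}M\cap(\tau^{-1}M)^\perp$ (the lone $M$ in the displayed condition is a typographical slip), the full subcategories in question are
\[
\bigcap_{M\in\calM}\bigl(\leftidx{^\perp}M\cap(\tau^{-1}M)^\perp\bigr) \quad\text{and}\quad \bigcap_{M\in\calM}\bigl(M^\perp\cap\leftidx{^\perp}\tau M\bigr),
\]
each a (possibly infinite) intersection of wide subcategories by the theorem and part (1), and hence wide by the opening observation. Parts (4) and (5) then reduce to (2) and (3) via the Auslander--Reiten formulas (Theorem~\ref{AR_formulas}): over a hereditary algebra $H$ we have $\Ext^1_H(X,M)\cong D\Hom_H(\tau^{-1}M,X)$ and $\Ext^1_H(M,X)\cong D\Hom_H(X,\tau M)$, which give the identifications $\leftidx{^{\perp_{0,1}}}M=\leftidx{^\perp}M\cap(\tau^{-1}M)^\perp$ and $M^{\perp_{0,1}}=M^\perp\cap\leftidx{^\perp}\tau M$. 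Taking intersections over $M\in\calM$ converts (4) and (5) into special cases of (2) and (3).

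The only step requiring genuine care is part (1). I expect the main obstacle to be checking that the symbol-swapped proof of the theorem (or, equivalently, the compatibilities of $D$ with $\tau$) really does go through without hidden asymmetries — in particular that every appeal to Lemma~\ref{ARF var} in the theorem's proof has a matching dual appeal available. Once (1) is established, (2)--(5) are purely formal: intersection-closure of wide subcategories for (2) and (3), and AR-formula rewriting in the hereditary case for (4) and (5).
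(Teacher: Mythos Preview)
Your proposal is correct, and it matches what the paper intends: the paper states this corollary immediately after the theorem with no proof at all, treating all five parts as direct consequences. Your deductions---duality for (1), intersection-closure of wide subcategories for (2)--(3), and the hereditary AR-formula identifications $\leftidx{^{\perp_{0,1}}}M=\leftidx{^\perp}M\cap(\tau^{-1}M)^\perp$ and $M^{\perp_{0,1}}=M^\perp\cap\leftidx{^\perp}\tau M$ for (4)--(5)---are exactly the intended fill-ins, and your flagging of the typo in (2) is correct.
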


\section{Acknowledgements}
The authors thank Hugh Thomas for helpful conversations.
The mathematics in this project were largely completed while the first author was a Zelevinsky Research Instructor at Northeastern University and the third author was a PhD student at Northeastern University. They wish to thank the department for a supportive research environment throughout their time there. 

\bibliography{kappa}
\bibliographystyle{plain}

\end{document}